\documentclass[oneside,11pt]{amsart}
\usepackage[active]{srcltx}
\usepackage{amsmath,amssymb}
\newcommand{\vertiii}[1]{{\left\vert\kern-0.25ex\left\vert\kern-0.25ex\left\vert #1
    \right\vert\kern-0.25ex\right\vert\kern-0.25ex\right\vert}}
\usepackage{amsmath, amsfonts,amsthm,times,graphics}

 \makeatletter
\renewcommand*\subjclass[2][2000]{%
  \def\@subjclass{#2}%
  \@ifundefined{subjclassname@#1}{%
    \ClassWarning{\@classname}{Unknown edition (#1) of Mathematics
      Subject Classification; using '1991'.}%
  }{%
    \@xp\let\@xp\subjclassname\csname subjclassname@#1\endcsname
  }%
}
 \makeatother

\newtheorem{theorem}{Theorem}[section]

\newtheorem{lemma}[theorem]{Lemma}
\newtheorem{corollary}[theorem]{Corollary}

\theoremstyle{definition}
\newtheorem{definition}[theorem]{Definition}

\newtheorem{remark}[theorem]{Remark}
\numberwithin{equation}{section}
\newcommand{\abs}[1]{\lvert#1\rvert}


\newcounter{minutes}\setcounter{minutes}{\time}
\divide\time by 60
\newcounter{hours}\setcounter{hours}{\time}
\multiply\time by 60 \addtocounter{minutes}{-\time}

\begin{document}

\title{On Riesz type inequalities for harmonic mappings on the unit disk}


\author{David Kalaj}
\address{Faculty of Natural Sciences and Mathematics, University of
Montenegro, Cetinjski put b.b. 81000 Podgorica, Montenegro}
\email{davidk@ac.me}

\def\thefootnote{}
\footnotetext{ \texttt{\tiny File:~\jobname.tex,
          printed: \number\year-\number\month-\number\day,
          \thehours.\ifnum\theminutes<10{0}\fi\theminutes }
} \makeatletter\def\thefootnote{\@arabic\c@footnote}\makeatother

\footnote{2010 \emph{Mathematics Subject Classification}: Primary
47B35} \keywords{Subharmonic functions, Harmonic mappings}
\begin{abstract}
We prove some sharp inequalities for complex harmonic functions on the unit disk. The results extend a  M. Riesz conjugate function theorem and some well-known estimates for holomorphic functions. We apply some of results to the isoperimetric inequality for harmonic mappings.
\end{abstract}
\maketitle
\tableofcontents

\section{Introduction}
Let $\mathbf{U}$ denote the unit disk and $\mathbf{T}$ the unit circle in the complex plane.
For $p>1$, we define the Hardy class $\mathbf{h}^p$ as the class of harmonic mappings $f=g+\bar h$, where $g$ and $h$ are holomorphic mappings defined on the unit disk $\mathbf{U}\subset\mathbf{C},$ so that $$\|f\|_p=\|f\|_{\mathbf{h}^p}=\sup_{0<r< 1} M_p(f,r)<\infty,$$ where $$M_p(f,r)=\left(\int_{\mathbf{T}}|f(r\zeta)|^p d\sigma(\zeta)\right)^{1/p}.$$ Here $\sigma$ is probability measure on $\mathbf{T}$. The subclass of holomorphic mappings that belongs to the class $\mathbf{h}^p$ is denoted by $H^p$.

If $f\in \mathbf{h}^p$, then by \cite[Theorem~6.13]{axl}, there exists
$$f(e^{it})=\lim_{r\to 1} f(re^{it}),   a.e.$$  and $f\in L^p(\mathbf{T}).$
Then there hold
\begin{equation}\label{come}\|f\|^p_{{\mathbf{h}^p}}=\lim_{r\to 1}\int_{0}^{2\pi}|f(re^{it})|^p \frac{dt}{2\pi}= \int_{0}^{2\pi}|f(e^{it})|^p \frac{dt}{2\pi}.\end{equation}

Similarly we define the {\it Hardy space} $H^p$ of holomorphic
functions.

Let $1<p<\infty$ and let $\overline p=\max\{p,p/(p-1)\}$. Verbitsky in \cite{ver} proved the  following result.  If $f=u+iv\in H^p$ and $v(0)=0$, then \begin{equation}\label{ver}\sec(\pi/(2\overline p))\|v\|_p\leq\|f\|_p\leq\csc(\pi/(2\overline p))\|u\|_p,\end{equation} and both estimates are sharp. This result improves the sharp inequality \begin{equation}\label{pico}\|v\|_p\leq\cot(\pi/(2\overline p))\|u\|_p\end{equation} found by S. K. Pichorides (\cite{pik}). For the same problem for real line setting
we refer to the papers by L. Grafacos (\cite{graf}) and B. Hollenbeck,  N. J. Kalton, I. E. Verbitsky (\cite{studia}). We also refer to the paper by Essen  \cite{essen} for some related results.

We extend those results for the harmonic functions in Hardy class $\mathbf{h}^p$ on the unit disk $\mathbf{U}$. For a harmonic mapping $f=g+\overline{h}\in \mathbf{h}^p $ , $(hg)(0)=0$, we define the norm $\vertiii{\cdot}_p=\vertiii{\cdot}_{\mathbf{h}^p}$ as follows $$\vertiii{f}_{p}=\sup_{0<r<1}\left(\int_{\mathbf{T}}(|g(rz)|^2+|h(rz)|^2)^{p/2}d\sigma(z)\right)^{1/p}.$$
Thus, in view of \eqref{come}, we have that $$\vertiii{f}_{p}=\left(\int_{\mathbf{T}}(|g(z)|^2+|h(z)|^2)^{p/2}d\sigma(z)\right)^{1/p}.$$

Then we find the best constants $A_p$ and $B_p$ in the inequalities
\begin{equation}\label{pok}
\vertiii{f}_p\le A_p\|f\|_p
\end{equation}

\begin{equation}\label{kop}
\|f\|_p\le B_p \vertiii{f}_p.
\end{equation}
Namely we show in our main results  that $$A_p=\frac{1}{\left(1-\abs{\cos\frac{\pi}{p}}\right)^{1/2}}\ \ \ \ \text{(Theorem~\ref{kalaj})}$$  and      $$B_p=\sqrt{2}\cos{\frac{\pi}{2\overline{p}}}\ \ \ \ \text{(Theorem~\ref{kalaj1})}.$$
By taking $g=h$ in \eqref{pok} and \eqref{kop} (see Corollary~\ref{ver2} and Corollary~\ref{ver3} below) we deduce \eqref{ver}.
One of application of our result is the exact calculation of the norm of complex Hilbert transform on the unit disk (and on the unit circle) and on the upper half-plane (and on the real line).

Namely we show that the norm of the complex (periodic and non-peridic) Hilbert transforms $\mathcal{H}: L^p(\mathbf{T},\mathbf{C})\to L^p(\mathbf{T},\mathbf{C})$ and $\mathcal{H}: L^p(\mathbf{R},\mathbf{C})\to L^p(\mathbf{R},\mathbf{C})$,  is  \begin{equation}\label{pbar}\|\mathcal{H}\|_p =\cot \frac{\pi}{2\bar{p}}\ \ \ (\textrm{Theorem}~\ref{prente}, \ \ \textrm{Corollary}~\ref{hili}).\end{equation}
 As another application of our main results, we prove an isoperimetric type inequality for harmonic mappings $h$ defined on the Bergman space $\mathbf{b}^p$ on the unit disk, where $p$ is an even integer larger than $2$. Namely for $n\in\mathbf{N}$ and $n\ge 2$ we obtain \begin{equation}\label{est1}\|f\|_{\mathbf{b}^{2n}}\le \frac{1}{2} \csc\left[\frac{\pi }{4 n}\right]\|f\|_{\mathbf{h}^n}\ \ \ (\textrm{Theorem}~\ref{isop}).\end{equation}

\section{Main results}
The first main result is the following theorem
\begin{theorem}\label{kalaj}
Let $1<p<\infty$. Assume that $f=g+\bar h\in \mathbf{h}^p$ is a harmonic mapping on the unit disk with $\Re(g(0)h(0))= 0$
. Then we have the following sharp inequality
\begin{equation}\label{nesi}\left(\int_{\mathbf{T}}(|g|^2+|h|^2)^{p/2}\right)^{1/p}\le \frac{1}{\left(1-\abs{\cos\frac{\pi}{p}}\right)^{1/2}}\left(\int_{\mathbf{T}} |g+\bar h|^p\right)^{1/p}.\end{equation}
The proof given below works under the weaker condition: $\Re(g(0)h(0))\ge 0$ for $1<p\le 3$. The sharpness of the constant follows from the sharpness of the corollary below.
\end{theorem}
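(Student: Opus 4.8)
The plan is to recast \eqref{nesi} as the single integral inequality $\int_{\mathbf T}\Phi(g,h)\,d\sigma\le0$, where
$$\Phi(a,b)=(\abs{a}^2+\abs{b}^2)^{p/2}-A_p^p\,\abs{a+\bar b}^p,\qquad A_p=\frac{1}{(1-\abs{\cos\frac\pi p})^{1/2}}.$$
By \eqref{come} the boundary integrals are limits of the integrals over the circles $\abs z=r$, so it suffices to bound $\int_{\mathbf T}\Phi(g(rz),h(rz))\,d\sigma(z)$ for each $r<1$ and let $r\to1$; this replaces boundary data by genuinely holomorphic $g,h$ on a neighborhood of $\overline{\mathbf U}$ and lets me use the mean value property freely. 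First I would record the identities $\abs{a+\bar b}^2=\abs a^2+\abs b^2+2\Re(ab)$, $\abs a^2+\abs b^2=\tfrac12(\abs{a+\bar b}^2+\abs{a-\bar b}^2)$, and $\abs a^2+\abs b^2\ge2\abs{\Re(ab)}$, so that $\Re(gh)$ is the natural harmonic quantity attached to the problem; since $gh$ is holomorphic, $\int_{\mathbf T}\Re(gh)\,d\sigma=\Re(g(0)h(0))$, and this is exactly the quantity the hypothesis annihilates.

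The core of the argument is the plurisubharmonic minorant method of Pichorides--Verbitsky--Hollenbeck--Kalton type. The aim is to produce a function $m(a,b)$ that is plurisubharmonic on $\mathbf C^2$ (equivalently, $z\mapsto m(g(z),h(z))$ is subharmonic for every pair of holomorphic $g,h$, which by the complex chain rule reduces to nonnegativity of the complex Hessian of $m$) and satisfies the pointwise minorant inequality
$$m(a,b)\le A_p^p\,\abs{a+\bar b}^p-(\abs a^2+\abs b^2)^{p/2}\qquad(a,b\in\mathbf C),$$
together with $m(g(0),h(0))\ge0$ under the normalization $\Re(g(0)h(0))=0$. Granting such an $m$, the sub--mean value property gives $\int_{\mathbf T}m(g,h)\,d\sigma\ge m(g(0),h(0))\ge0$, and combining with the pointwise bound yields $\int_{\mathbf T}\Phi(g,h)\,d\sigma\le-\int_{\mathbf T}m(g,h)\,d\sigma\le0$, which is \eqref{nesi}. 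The degree-$p$ homogeneity of $\Phi$ forces $m$ to be degree-$p$ homogeneous as well --- a term like $c\,\Re(gh)$ alone is degree $2$ and is destroyed by scaling --- so the correct candidate is built from $\Re\big((gh)^{p/2}\big)$, whose center value involves $(g(0)h(0))^{p/2}$; since $\Re(g(0)h(0))=0$ makes $g(0)h(0)$ purely imaginary, this power is controlled by $\cos(p\pi/4)$-type factors, which is where the constant $1-\abs{\cos(\pi/p)}$ and the choice of branch of the power enter.

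After reducing to the homogeneous problem by dividing through by $(\abs a^2+\abs b^2)^{p/2}$ and writing $2\abs a\abs b=(\abs a^2+\abs b^2)\sin\psi$ with $\gamma=\arg(ab)$ and $\sin\psi\in[0,1]$, the minorant inequality becomes a sharp two--parameter trigonometric estimate in $(\psi,\gamma)$; verifying this estimate, and checking that the complex Hessian condition holds for the chosen $m$, is the step I expect to be the main obstacle, since it is precisely here that the optimal value of $A_p$ is pinned down and that the sign of the relevant coefficient decides the admissible range of $p$. I would expect the weaker hypothesis $\Re(g(0)h(0))\ge0$ to suffice exactly when that coefficient keeps a favorable sign, which happens for $1<p\le3$, matching the stated remark. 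The sharpness of $A_p$ is not established directly but is inherited from the extremal configuration in the corollary obtained by setting $g=h$, so that step can be deferred.
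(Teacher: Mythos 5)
Your overall strategy is the paper's own: recast \eqref{nesi} as $\int_{\mathbf T}\bigl((|g|^2+|h|^2)^{p/2}-A_p^p|g+\bar h|^p\bigr)\le 0$, produce a plurisubharmonic minorant $m(a,b)$ with $(|a|^2+|b|^2)^{p/2}\le A_p^p|a+\bar b|^p-m(a,b)$ pointwise, and close the argument with the sub--mean value property $\int_{\mathbf T}m(g,h)\ge m(g(0),h(0))\ge 0$, the hypothesis $\Re(g(0)h(0))=0$ entering only through the sign of $m$ at the center. That much matches Section 3 and the proof of Theorem~\ref{kalaj} exactly, including the observation that homogeneity forces $m$ to be built from a $p/2$-power of $gh$ rather than from $\Re(gh)$.

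There are, however, two genuine gaps. First, your single candidate $m=c\,\Re\bigl((gh)^{p/2}\bigr)$ is only correct for $1<p\le 2$. For $2<p<4$ the function $\Re(\zeta^{p/2})$ (with the branch convention \eqref{rez}) is \emph{not} subharmonic: at the branch cut $\arg\zeta=\pm\pi$ the angular derivative jumps by $p\sin(p\pi/2)<0$ (a concave kink), and the sub--mean value inequality at $\zeta=0$ fails since $\int_{-\pi}^{\pi}\cos(p\theta/2)\,d\theta\propto\sin(p\pi/2)<0$. The paper must therefore switch minorants: $\Phi_p(zw)=-|zw|^{p/2}\cos\frac p2(\pi-|\arg(zw)|)$ for $2<p\le 4$ (Lemma~\ref{lemsub}) and the $\vartheta_p$-based function of Lemma~\ref{lemsub2} for $p\ge 4$, where the pointwise inequality is even taken in the rotated form $a_p|g+\overline{ih}|^p$. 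Your proposal gives no indication of this case split, and the sign analysis at the origin (which is exactly where the restriction ``$\Re(g(0)h(0))\ge 0$ suffices only for $p\le 3$'' comes from) depends on which minorant is in force. Second, the entire quantitative content of the theorem --- the sharp pointwise inequality that pins down $A_p$ --- is deferred as ``the main obstacle''; in the paper this is Lemmas~\ref{nice}, \ref{posa} and \ref{hard} together with the Verbitsky inequality of Lemma~\ref{versi}, and it occupies most of the proof. Relatedly, your reduction of plurisubharmonicity to ``nonnegativity of the complex Hessian'' is not available here: the minorants are only piecewise smooth, and the paper instead verifies subharmonicity by exhibiting them locally as maxima of harmonic functions plus a separate mean-value check at the origin. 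As it stands the proposal is a correct road map of the published argument, not a proof.
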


Here and in the sequel throughout the whole paper, we use the notation $$\int_{\mathbf{T}} f:=\int_{\mathbf{T}} f(z)d\sigma(z).$$

\begin{corollary}\cite{ver}\label{ver2}
Let $g=u+iv$ be a holomorphic function so that $v(0)=0$, then the sharp inequality \begin{equation}\label{aleks}\|g\|_{H^p}\le \frac{1}{\cos \frac{\pi}{2\overline{p}}}\| u\|_{\mathbf{h}^p}\end{equation} holds.
\end{corollary}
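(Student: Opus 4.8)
The plan is to obtain \eqref{aleks} as the special case of Theorem~\ref{kalaj} in which the two holomorphic components coincide. Explicitly, I would apply the sharp inequality \eqref{nesi} to the harmonic mapping $f=g+\bar g$, that is, take the component ``$h$'' of Theorem~\ref{kalaj} to be the given $g$. Before doing so one must check the initial-value hypothesis: since $v(0)=0$ we have $g(0)=u(0)\in\mathbf{R}$, whence $\Re\bigl(g(0)g(0)\bigr)=u(0)^2\ge 0$. This satisfies the weaker condition $\Re(g(0)h(0))\ge 0$, so Theorem~\ref{kalaj} applies verbatim for every $g$ when $1<p\le 3$. For $p>3$ the theorem is only guaranteed under $\Re(g(0)^2)=0$, i.e. $u(0)=0$; removing this normalization is, I expect, the main obstacle, and I would treat it by first establishing \eqref{aleks} for $g$ with $g(0)=0$ and then passing to general $g$ (with $v(0)=0$) by an approximation/continuity argument in the initial value.

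Granting the applicability of \eqref{nesi}, both sides collapse. With $h=g$ one has $|g|^2+|h|^2=2|g|^2$, so the left-hand integrand is $2^{p/2}|g|^p$ and the left-hand side equals $\sqrt{2}\,\|g\|_{H^p}$. On the right, $g+\bar h=g+\bar g=2\,\Re g=2u$, so $\bigl(\int_{\mathbf{T}}|g+\bar h|^p\bigr)^{1/p}=2\|u\|_{\mathbf{h}^p}$. Substituting these into \eqref{nesi} and cancelling one factor $\sqrt{2}$ yields
\[
\|g\|_{H^p}\le \frac{\sqrt{2}}{\left(1-\abs{\cos\frac{\pi}{p}}\right)^{1/2}}\,\|u\|_{\mathbf{h}^p}.
\]

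It remains to simplify the constant, which is where the precise form of $\overline p=\max\{p,p/(p-1)\}$ enters. Using the half-angle formula and splitting on the sign of $\cos\frac{\pi}{p}$: for $p\ge 2$ we have $\cos\frac{\pi}{p}\ge 0$ and $1-\cos\frac{\pi}{p}=2\sin^2\frac{\pi}{2p}=2\sin^2\frac{\pi}{2\overline p}$; for $1<p\le 2$ we have $\cos\frac{\pi}{p}\le 0$ and $1+\cos\frac{\pi}{p}=2\cos^2\frac{\pi}{2p}=2\sin^2\bigl(\frac{\pi}{2}-\frac{\pi}{2p}\bigr)=2\sin^2\frac{\pi}{2\overline p}$. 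In either case $1-\abs{\cos\frac{\pi}{p}}=2\sin^2\frac{\pi}{2\overline p}$, so the constant becomes $\sqrt{2}\big/\bigl(2\sin^2\frac{\pi}{2\overline p}\bigr)^{1/2}=1/\sin\frac{\pi}{2\overline p}=\csc\frac{\pi}{2\overline p}$, the sharp constant appearing in Verbitsky's inequality \eqref{ver}. Sharpness of \eqref{aleks} is then inherited from the known extremizers for the conjugate-function problem (\cite{ver}); no new estimate is needed beyond Theorem~\ref{kalaj}, the constant identity above, and the initial-value reduction.
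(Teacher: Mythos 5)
Your reduction of \eqref{aleks} to Theorem~\ref{kalaj} via $h=g$, the verification $\Re(g(0)h(0))=u(0)^2\ge 0$, the collapse of the two sides to $\sqrt{2}\,\|g\|_{H^p}$ and $2\|u\|_{\mathbf{h}^p}$, and the identity $\sqrt{2}\bigl(1-\abs{\cos\frac{\pi}{p}}\bigr)^{-1/2}=\csc\frac{\pi}{2\overline p}$ are all correct, and for $1<p\le 3$ this is essentially the paper's own argument (the paper runs it only for $1<p\le 2$; the constant $1/\cos\frac{\pi}{2\overline p}$ printed in the corollary is a $\sin$/$\cos$ slip --- your $\csc\frac{\pi}{2\overline p}$ is what the computation yields and agrees with \eqref{ver}). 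The genuine gap is exactly where you flagged it: for $p>3$ Theorem~\ref{kalaj} is only available under $\Re(g(0)h(0))=0$, i.e.\ $u(0)=0$, and the proposed ``approximation/continuity argument in the initial value'' does not exist in any evident form. Writing $g=u(0)+g_0$ with $g_0(0)=0$ and knowing $\|g_0\|_{H^p}\le C\|\Re g_0\|_{\mathbf{h}^p}$ gives no control of $\|g\|_{H^p}$ by $\|u\|_{\mathbf{h}^p}$, since adding a constant changes the two $L^p$ norms in incomparable ways for $p\ne 2$, and there is no parameter along which to pass to a limit. So as written your proof covers only $1<p\le 3$.

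The paper closes the case $p>2$ by a different mechanism, which is the missing ingredient: it does not invoke Theorem~\ref{kalaj} there at all. Instead it uses Corollary~\ref{ver3} (inequality \eqref{eqeq}), which holds for every $p>1$ under the sole hypothesis $v(0)=0$, because it comes from Theorem~\ref{kalaj1} applied to $v=-\tfrac12(ig+\overline{ig})$, where the relevant sign condition is $\Re\bigl((ig(0))\cdot(ig(0))\bigr)=-u(0)^2\le 0$ and hence automatic. Combining \eqref{eqeq} with the Minkowski-type inequality $\|g\|_{H^p}=\|\sqrt{u^2+v^2}\|_{L^p}\le\bigl(\|u\|_{\mathbf{h}^p}^2+\|v\|_{\mathbf{h}^p}^2\bigr)^{1/2}$ (valid because $p/2\ge 1$) gives $\|g\|_{H^p}^2\le\|u\|_{\mathbf{h}^p}^2+C^2\|g\|_{H^p}^2$ with $C$ the constant of \eqref{eqeq}, and solving this for $\|g\|_{H^p}$ produces the stated bound. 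The point is that the sign obstruction at $g(0)\ne 0$ disappears when the estimate is routed through $v$ (via Theorem~\ref{kalaj1}, whose hypothesis has the favorable sign) rather than applied to $g+\bar g$ directly; you should replace the approximation step by this bootstrap for $p>3$ (or, as the paper does, already for $p>2$).
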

\begin{proof}[Proof of Corollary~\ref{ver}]
Let $1<p\le 2$. Since $v(0)=0$, it follows that $g(0)g(0)=u^2(0)\ge 0$, and so the inequality \eqref{aleks} follows by applying the previous theorem to real harmonic function $f=g+\bar g$ and by using the formula $${\sqrt{2}}{\left(1-\abs{\cos\frac{\pi}{p}}\right)^{-1/2}}=\frac{1}{\cos \frac{\pi}{2 \overline{p}}}.$$ If $p>2$, then we make use of inequality \eqref{eqeq} below. We have by using Jensen inequality (as in \cite{ver}), the following
\[\begin{split}\|g\|_{H^p}=\|\sqrt{u^2+v^2}\|_{L^p(\mathbf{T})}\le \left(\|u\|^2_{\mathbf{h}^p}+\|v\|^2_{\mathbf{h}^p}\right)^{1/2}
\le \left(\|u\|^2_{\mathbf{h}^p}+\sin^2\frac{\pi}{2\overline{p}}\|g\|^2_{H^p}\right)^{1/2}.
\end{split}\]
Therefore $$\|g\|^2_{H^p}(1-\sin^2\frac{\pi}{2\overline{p}})\le \|u\|^2_{\mathbf{h}^p}$$ and this implies the corollary.
\end{proof}
To motivate the following theorem notice the following simple sharp inequality $$|z+\bar w|\le \sqrt{2}(|z|^2+|w|^2)^{1/2}.$$ Thus we have
$$\|f\|_{\mathbf{h}^p}\le  \sqrt{2}\|\sqrt{|g|^2+|h|^2}\|_{L^p(\mathbf{T})}.$$ However the last  inequality is not sharp, and the sharp inequality has been given by the following theorem.\begin{theorem}\label{kalaj1}
Let $1<p<\infty$ and assume that $f=g + \bar h \in \mathbf{h}^p$ is a harmonic mapping on the unit disk with $\Re(g(0)\cdot h(0))\le 0$. Then we have the following sharp inequality
\begin{equation}\label{lopa}\|f\|_{\mathbf{h}^p}\le  \sqrt{2}\max\{\sin\frac{\pi}{2p},\cos\frac{\pi}{2p}\}\left(\int_{\mathbf{T}}(|g|^2+|h|^2)^{p/2}\right)^{1/p}.\end{equation}
\end{theorem}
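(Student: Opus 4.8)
The plan is to establish the equivalent integral form of \eqref{lopa}, namely
$\int_{\mathbf{T}}|g+\bar h|^p\le C^p\int_{\mathbf{T}}(|g|^2+|h|^2)^{p/2}$ with $C=\sqrt{2}\cos\frac{\pi}{2\overline{p}}$
(recall $\max\{\sin\frac{\pi}{2p},\cos\frac{\pi}{2p}\}=\cos\frac{\pi}{2\overline{p}}$), by the subharmonic minorant method that underlies the Pichorides--Verbitsky estimates and Theorem~\ref{kalaj}. First I would record the identity $|g+\bar h|^2=|g|^2+|h|^2+2\Re(gh)$ and set $s=|g|^2+|h|^2$, $t=2\Re(gh)$, so that $|t|\le s$ and, crucially, $gh$ is holomorphic with $\int_{\mathbf{T}}t\,d\sigma=2\Re(g(0)h(0))\le 0$ by hypothesis. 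The goal becomes $\int_{\mathbf{T}}(s+t)^{p/2}\le C^p\int_{\mathbf{T}}s^{p/2}$.

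The heart of the argument is to construct a function $\Phi\colon\mathbf{C}^2\to\mathbf{R}$, positively homogeneous of degree $p$, with three properties: (i) the pointwise majorization $\Phi(w_1,w_2)\le C^p(|w_1|^2+|w_2|^2)^{p/2}-|w_1+\bar w_2|^p$ on all of $\mathbf{C}^2$; (ii) subharmonicity of $z\mapsto\Phi(g(z),h(z))$ for every pair of holomorphic $g,h$, which by the chain rule is equivalent to positive semidefiniteness of the complex Hessian $\bigl(\partial^2\Phi/\partial w_j\partial\bar w_k\bigr)$, i.e.\ plurisubharmonicity of $\Phi$; and (iii) the sign condition $\Phi(w_1,w_2)\ge 0$ whenever $\Re(w_1w_2)\le 0$. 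Granting such a $\Phi$, the sub-mean-value inequality gives $\int_{\mathbf{T}}\Phi(g,h)\,d\sigma\ge\Phi(g(0),h(0))\ge 0$, where the last step uses (iii) together with the hypothesis $\Re(g(0)h(0))\le 0$; feeding this into (i) yields exactly $C^p\int_{\mathbf{T}}s^{p/2}-\int_{\mathbf{T}}(s+t)^{p/2}\ge 0$, which is the claim.

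To find $\Phi$ and to pin down the constant I would first locate the extremal configuration by specializing to $|g|=|h|$, e.g.\ $h=\pm g$. Writing $g=u+iv$ and $h=-g$ one has $f=2i\,\Im g$, $\Re(g(0)h(0))=-u(0)^2+v(0)^2$, and the inequality collapses to the sharp scalar estimate $\|\Im g\|_p\le\cos\frac{\pi}{2\overline{p}}\|g\|_{H^p}$ of Corollary~\ref{ver2}/\ref{ver3}; this both forces $C=\sqrt{2}\cos\frac{\pi}{2\overline{p}}$ and tells me which one--variable trigonometric inequality the two--variable construction must reproduce, namely the sharp $\cos(p\,\cdot)$-inequality that produces the angle $\tfrac{\pi}{2\overline{p}}$. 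Guided by this, $\Phi$ is assembled from $(|w_1|^2+|w_2|^2)^{p/2}$ together with a degree-$p$ harmonic corrector modeled on $\Re\bigl((w_1w_2)^{p/2}\bigr)$, whose boundary mean is controlled by $\Re(g(0)h(0))$; note that for $p=2$ the whole scheme degenerates to the elementary identity $\int_{\mathbf{T}}|g+\bar h|^2=\vertiii{f}_2^2+2\Re(g(0)h(0))$, which already gives $C=1$ and shows the structure is correct.

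The main obstacle is precisely this construction: verifying positive semidefiniteness of the complex Hessian in (ii) while keeping the majorization (i) tight enough to retain the sharp constant. The coupling of the two complex variables and the half-integer power in the natural corrector make the Hessian computation delicate, and one must handle the single-valuedness of $\Re\bigl((w_1w_2)^{p/2}\bigr)$ near the zeros of $gh$ — for instance by checking subharmonicity off the isolated zero set and extending, or by an approximation argument. Once (i)--(iii) are in place the proof concludes as above, and sharpness of $C$ requires no further work: it follows from the sharpness in Corollary~\ref{ver2}/\ref{ver3} by taking $h=-g$ with $v(0)=0$, so that $\Re(g(0)h(0))=-u(0)^2\le 0$, $f=2i\,\Im g$, $\vertiii{f}_p=\sqrt{2}\,\|g\|_{H^p}$, and the extremal functions for the scalar estimate drive the ratio $\|f\|_p/\vertiii{f}_p$ up to $\sqrt{2}\cos\frac{\pi}{2\overline{p}}$.
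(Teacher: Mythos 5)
Your overall strategy is the paper's own: write $|g+\bar h|^p\le c_p(|g|^2+|h|^2)^{p/2}-d_p\mathcal{G}_p(g,h)$ pointwise with $\mathcal{G}_p$ plurisubharmonic, integrate over $\mathbf{T}$, and use the sub-mean-value inequality at the origin together with the sign hypothesis $\Re(g(0)h(0))\le 0$ to discard the corrector term; your identification of the constant via $h=\pm g$ and the reduction of sharpness to Corollary~\ref{ver3} are also exactly what the paper does. Two remarks on the structure. First, you make the verification of (ii) harder than it needs to be: the corrector is a function of the single variable $w_1w_2$, i.e.\ $\mathcal{G}_p(w_1,w_2)=\Psi_p(w_1w_2)$, so since $gh$ is holomorphic one only needs $\Psi_p$ subharmonic on $\mathbf{C}$ (a one-variable check of the sub-mean-value property at $0$ and local harmonicity elsewhere), not a two-variable complex Hessian computation. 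Second, by homogeneity the pointwise majorization (i) reduces to a two-real-parameter inequality in $r=|z|/|w|$ and the argument $t$ of $zw$, which is how the paper attacks it.

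The genuine gap is that the central technical content is absent, and at one point your guess for the corrector would fail. You propose a corrector ``modeled on $\Re((w_1w_2)^{p/2})$'' and defer the verification as ``the main obstacle,'' but this is precisely where all the work lies. For $p>2$ the function $\Re(\zeta^{p/2})$ (with any branch choice) is \emph{not} the right object: the paper must instead use the piecewise-defined $\vartheta_1=\vartheta_{1,p}$, built from $-\cos(p\theta/2)$ near $\theta=0$ and a maximum of two cosines in the middle range (following Hollenbeck--Verbitsky), and separately prove its subharmonicity (Lemma~\ref{lemsub2} and the analogous statement for $\Psi_p$); for $1<p<2$ the corrector is $|zw|^{p/2}\cos\frac{p}{2}(\pi-|\arg(zw)|)$, again not the principal branch of $\Re((zw)^{p/2})$. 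Moreover the sharp pointwise inequality itself --- Lemma~\ref{ari} for $p>2$ and Lemma~\ref{vanesa} for $1<p<2$, i.e.\ $\left(1+r^2+2r\cos t\right)^{p/2}\le c_p(1+r^2)^{p/2}-d_pr^{p/2}\vartheta_1(t)$ and its companion --- occupies several pages of delicate monotonicity and case analysis (stationary points of $P(r)$ in $a=(1+r^2)/(2r)$, the threshold $p_0\approx 2.45$, boundary cases $a=1$ and $a\to\infty$). Without constructing the corrector explicitly and proving this inequality with the stated $c_p=(\sqrt{2}\cos\frac{\pi}{2\bar p})^p$, the proof is a plan rather than a proof: the sharp constant is exactly what is at stake in keeping (i) tight while (ii) and (iii) hold, and nothing in your outline certifies that such a $\Phi$ exists.
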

\begin{remark}
If $p=2$, then inequalities \eqref{nesi} and \eqref{lopa} are opposite to each other because  $$\frac{1}{\left(1-\abs{\cos\frac{\pi}{p}}\right)^{1/2}}= 1=\sqrt{2}\max\{\sin\frac{\pi}{2p},\cos\frac{\pi}{2p}\}.$$ This is not a surprising fact, because the given integrals coincide if $\Re (h(0)g(0))=0$. In other words for every $f\in \mathbf{h}^2$, $\|h\|_{ \mathbf{h}^2}=\vertiii{h}_{ \mathbf{h}^2}$.
\end{remark}

\begin{corollary}\cite{ver}\label{ver3}
If $v$ is a real harmonic function with $v(0)=0$  and $g=u + i v$ is an analytic function, then for every $p>1$ we have the inequality
\begin{equation}\label{eqeq}
\|v\|_{\mathbf{h}^p}\le \sin \frac{\pi}{2\bar p}\|g\|_{H^p}.
\end{equation}
\end{corollary}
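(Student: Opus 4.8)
The plan is to derive the corollary directly from Theorem~\ref{kalaj1} by representing the real harmonic function $v$ as a two-sided harmonic mapping $G+\bar H$ chosen so that $\abs{G}^2+\abs{H}^2$ is a constant multiple of $\abs{g}^2$. Since $g$ is holomorphic and $v=\Im g$, we have $v=\frac{1}{2i}\left(g-\bar g\right)$; because the holomorphic/antiholomorphic splitting of a harmonic function is unique once the constant is attached to the holomorphic part, this forces
$$G=H=\frac{1}{2i}\,g=-\frac{i}{2}\,g .$$
In particular $\abs{G}^2+\abs{H}^2=\tfrac12\abs{g}^2$, so that
$$\left(\int_{\mathbf{T}}(\abs{G}^2+\abs{H}^2)^{p/2}\right)^{1/p}=\frac{1}{\sqrt{2}}\left(\int_{\mathbf{T}}\abs{g}^p\right)^{1/p}=\frac{1}{\sqrt{2}}\,\|g\|_{H^p}.$$

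Next I would verify the sign hypothesis of the theorem, applied with the holomorphic data $(G,H)$ in place of $(g,h)$. Evaluating at the origin gives $G(0)=H(0)=-\frac{i}{2}g(0)$, hence $\Re\big(G(0)\cdot H(0)\big)=\Re\!\big(-\tfrac14 g(0)^2\big)$. The assumption $v(0)=0$ yields $g(0)=u(0)+iv(0)=u(0)\in\mathbf{R}$, so $g(0)^2=u(0)^2\ge 0$ and therefore $\Re\big(G(0)\cdot H(0)\big)=-\tfrac14 u(0)^2\le 0$. Thus $(G,H)$ satisfies exactly the hypothesis $\Re(G(0)\cdot H(0))\le 0$ of Theorem~\ref{kalaj1}, and applying \eqref{lopa} to the harmonic mapping $v=G+\bar H\in\mathbf{h}^p$ together with the computation above gives
$$\|v\|_{\mathbf{h}^p}\le \sqrt{2}\,\max\Big\{\sin\tfrac{\pi}{2p},\cos\tfrac{\pi}{2p}\Big\}\cdot\frac{1}{\sqrt{2}}\,\|g\|_{H^p}=\max\Big\{\sin\tfrac{\pi}{2p},\cos\tfrac{\pi}{2p}\Big\}\,\|g\|_{H^p}.$$

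It then remains only to rewrite this constant in terms of $\bar p=\max\{p,p/(p-1)\}$, and this trigonometric bookkeeping is the step I expect to require the most care, since everything preceding it is a routine change of representation. Splitting into the two regimes $1<p\le 2$ and $p\ge 2$ and using $\frac{\pi}{2\bar p}=\frac{\pi}{2}-\frac{\pi}{2p}$ in the first range (so that sine and cosine interchange), one matches the factor $\max\{\sin\frac{\pi}{2p},\cos\frac{\pi}{2p}\}$ with the constant $\sin\frac{\pi}{2\bar p}$ asserted in \eqref{eqeq}. The delicate point here is keeping track of which of the two trigonometric terms dominates in each range and how that choice is encoded by the definition of $\bar p$; once this identification is made, the inequality \eqref{eqeq} follows at once. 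I would also note that this is precisely the form in which Corollary~\ref{ver3} is used inside the proof of Corollary~\ref{ver2} for the range $p>2$, where the squared constant $\sin^2\frac{\pi}{2\bar p}$ is fed into the Jensen-type estimate there.
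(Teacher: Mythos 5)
Your argument is, step for step, the paper's own proof: the paper likewise writes $v=\frac{g-\bar g}{2i}=-\frac{1}{2}\left(ig+\overline{ig}\right)$, which is exactly your decomposition $G=H=-\frac{i}{2}g$, checks the sign hypothesis of Theorem~\ref{kalaj1} through $\Re\left(G(0)H(0)\right)=-\frac{1}{4}u(0)^2\le 0$ (this is where $v(0)=0$ enters, as in your write-up), and then applies \eqref{lopa}. Your normalization $|G|^2+|H|^2=\frac{1}{2}|g|^2$, hence $\vertiii{v}_{p}=\frac{1}{\sqrt{2}}\|g\|_{H^p}$, is also correct, so both you and the paper validly arrive at
\[
\|v\|_{\mathbf{h}^p}\le \max\left\{\sin\frac{\pi}{2p},\cos\frac{\pi}{2p}\right\}\|g\|_{H^p}.
\]

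The genuine gap is the ``trigonometric bookkeeping'' you deferred: it cannot be carried out, because the identity you (and the paper, via its ``simple formula'') rely on is false. With $\bar p=\max\{p,p/(p-1)\}$ one has $\frac{\pi}{2\bar p}\le\frac{\pi}{4}$; for $p\ge 2$ the maximum above is $\cos\frac{\pi}{2p}=\cos\frac{\pi}{2\bar p}$, and for $1<p\le 2$ it is $\sin\frac{\pi}{2p}=\cos\left(\frac{\pi}{2}-\frac{\pi}{2p}\right)=\cos\frac{\pi}{2\bar p}$, so in both regimes
\[
\max\left\{\sin\frac{\pi}{2p},\cos\frac{\pi}{2p}\right\}=\cos\frac{\pi}{2\bar p}\ \ge\ \frac{1}{\sqrt{2}}\ \ge\ \sin\frac{\pi}{2\bar p},
\]
with equality only at $p=2$. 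Hence what your argument actually establishes is $\|v\|_{\mathbf{h}^p}\le\cos\frac{\pi}{2\bar p}\,\|g\|_{H^p}$, which is precisely the form $\sec\left(\pi/(2\bar p)\right)\|v\|_p\le\|g\|_p$ of Verbitsky's inequality \eqref{ver} quoted in the introduction --- not the stated \eqref{eqeq}. The discrepancy is not repairable, because \eqref{eqeq} as printed is false for every $p\neq 2$: for $p>2$ take $g(z)=iz$, so that $v(re^{i\theta})=r\cos\theta$, $v(0)=0$, $\|g\|_{H^p}=1$, and $\|v\|_{\mathbf{h}^p}\ge\|\cos\theta\|_{L^2(\mathbf{T})}=\frac{1}{\sqrt{2}}>\sin\frac{\pi}{2\bar p}$; for $1<p<2$, Calderon's example $g(z)=\left(\frac{1+z}{1-z}\right)^{2\gamma/\pi}$ (recalled in the paper's remark after Corollary~\ref{kalaj2}) has boundary argument $\pm\gamma$, so $\|v\|_{\mathbf{h}^p}=\sin\gamma\,\|g\|_{H^p}\to\sin\frac{\pi}{2p}\,\|g\|_{H^p}$ as $\gamma\uparrow\frac{\pi}{2p}$, while $\sin\frac{\pi}{2p}>\frac{1}{\sqrt{2}}>\sin\frac{\pi}{2\bar p}$. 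So the flaw lies in the statement and in the paper's own proof (which invokes the same wrong identity $\max\{\sin\frac{\pi}{2p},\cos\frac{\pi}{2p}\}=\sin\frac{\pi}{2\bar p}$), not in your reduction: the corollary should carry the constant $\cos\frac{\pi}{2\bar p}$, equivalently $\sin\frac{\pi}{2q}$ with $q=\min\{p,p/(p-1)\}$. This also has the knock-on effect you alluded to: feeding the corrected constant into the proof of Corollary~\ref{ver2} for $p>2$ produces $\|g\|_{H^p}\le\csc\frac{\pi}{2\bar p}\,\|u\|_{\mathbf{h}^p}$, in agreement with \eqref{ver}, rather than the $\sec\frac{\pi}{2\bar p}$ printed in \eqref{aleks}.
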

\begin{proof}[Proof of Corollary~\ref{ver3}]
By applying Theorem~\ref{kalaj1} to the real harmonic function $$v=f=\frac{g -\bar g}{2i}=-\frac{1}{2}(ig +\overline{ig}),$$ in view of the fact $$i^2g(0)g(0)=-g^2(0)\le 0,$$ and by using the simple formula $$\max\{\sin\frac{\pi}{2p},\cos\frac{\pi}{2p}\}=\sin\frac{\pi}{2\overline{p}}$$ we obtain \eqref{eqeq}.
\end{proof}
\subsection{Application to Hilbert transform}\label{sub}
\subsubsection{Hilbert transform on the unit disk and unit circle}
If $f=u+iv$ is a harmonic
function defined in the unit disk $\mathbf{U}$, then a harmonic function $\tilde f=\tilde u+i\tilde v$ is called the harmonic
conjugate of $f$ if $u+i\tilde u$ and $v+i\tilde v$ are analytic functions. Notice that $\tilde f$ is uniquely determined up to an additive constant. Let $f=g+\bar h=u+iv $ be a harmonic mapping, where $h$ and $g$ are holomorphic and $ h(0)=0$. Then $\tilde f := -(i g+\overline{i h})=\tilde u + i \tilde v$ is a harmonic conjugate of $f$ which we deal with in this paper. Namely $u=\Re(g+h)$, $v=\Im(g+h)$.
Further $\tilde u =\Re (-i (g-\bar h))=\Im(g-\bar h)$ and $\tilde v= \Im (-i (g-\bar h))=\Re (\bar h-g)$.
Thus $u + i \tilde u = \Re(g+h)+i \Im(g-\bar h)= g+ h$ and $v+i\tilde v= \Im(g+h)+i \Re (\bar h-g)=i(h-g)$.

Further if $f$ is real valued, i.e. if $f(z)=h(z)+\overline{h(z)}$, then $f(z)=h(z)+\overline{h(0)}+\overline{h(z)-h(0)}=g_1(z)+\overline{h_1(z)}$. Here $h_1(0)=0$. If $h(0)\neq 0$, then
$\tilde f(z)=i (g_1(z)-\overline{h_1(z)})$ is not real valued function, but $\hat{f}(z):=\tilde f(z)- \tilde f(0)$ is real valued with $\hat{f}(0)=0$. Then $\hat{f}$ is the harmonic conjugate with respect to standard meaning. Furthermore for every $z$
\begin{equation}\label{hph}|\hat f(z)|^2=|\tilde f(z)|^2-|\tilde f(0)|^2.\end{equation}

Let
$\chi$ be the boundary value of $f$ and assume that $\tilde \chi$ is the boundary value of $\tilde f$. Then $\tilde \chi$ is called the Hilbert transform of $\chi$ and we denote it by $\tilde\chi=H[\chi]$. Assume that $\tilde\chi\in L^1(\mathbf{T})$.

The (periodic) Hilbert transform of a function $\chi\in L^1(\mathbf T)$
is also given  by the formulas
\begin{equation}\label{hilbert2}\tilde\chi(\tau)=-
\frac 1\pi
\int_{0^+}^\pi\frac{\chi(\tau+t)-\chi(\tau-t)}{2\tan(t/2)}
dt.\end{equation}
and
\begin{equation}\label{hilbert1}\tilde\chi(\tau)=-i\sum_{k\in\mathbf{Z}}\mathrm{sign}(k) \widehat{\chi}(k) e^{ik\tau},\end{equation} where
$$\widehat{\chi}(k)=\frac{1}{2\pi}\int_{\mathbf{T}} \chi(e^{it}) e^{-ik t}dt$$ and $\mathrm{sign}(0)=1$.
 Here $\int_{0^+}^\pi \Phi(t) dt:=\lim_{\epsilon\to 0^+}
\int_{\epsilon}^\pi \Phi(t) dt.$ The integral in \eqref{hilbert2} is improper and
converges for a.e. $\tau\in[0,2\pi]$. If $P$ denote the Poisson extension on the unit disk, then we have $\widetilde{P[\chi](z)}=P[\tilde\chi](z)$. 
\subsubsection{Hilbert transform on the real line and half-plan}
Let $p>1$ and let $f\in L^p(\mathbf{R},\mathbf{C})$. Then the (nonperiodic) Hilbert transform of $f$ is defined by
$$\mathcal{H}[\phi](x)=\tilde \phi(x)=\frac{1}{\pi}\int_{-\infty}^\infty \frac{\phi(t)}{x-t} dt=-\frac{1}{\pi}\lim_{\epsilon\downarrow 0}\int_\epsilon^\infty\frac{\phi(x+t)-\phi(x-t)}{t} dt.$$ 

Further, the mapping $\phi$ induces a harmonic mapping defined on the upper half-plane $\mathbf{H}:=\{z\in\mathbf{C}: \Im z>0\}$, by the formula $f(z)= P[\phi](z)$, where $P$ is the Poisson integral on the upper half-plane.
 
Now let  $\mathbf{h}^p(\mathbf{H})$ be  the Hardy space on the upper half-plane, i.e. the class of  harmonic mappings $f$ defined on $\mathbf{H}$ so that $$\|f\|_{\mathbf{h}^p(\mathbf{H})}:=\sup_{y>0} \|f(x+i y)\|_{L^p(\mathbf{R})}<\infty.$$  If $f=P[\chi](z)$, then we have  $\|f\|_{\mathbf{h}^p(\mathbf{H})}=\|\chi\|_{L^p(\mathbf{R})}$ (\cite[Theorem~7.17]{axl}).  
Furthermore the harmonic mapping $\tilde w= P[\tilde \phi]$ is harmonic conjugate of $w$.

The following theorem, in view of \eqref{hph} extends the main result of S. K. Pichorides (\cite{pik})
\begin{theorem}\label{prente}
Assume that $p>1$ and $f$ is a complex harmonic mapping so that $f=g+\bar h \in \mathbf{h}^p(\mathbf{U})$ and $h(0)=0$. Then $\tilde f=i (g-\bar h)\in \mathbf{h}^p$, and we have the sharp inequality
\begin{equation}\label{hilbert}
\|\tilde f\|_{\mathbf{h}^p}\le \cot\frac{\pi}{2\bar p}\| f\|_{\mathbf{h}^p}.
\end{equation}
In other words the norm of the operator $$\mathcal{H}:\mathbf{h}^p(\mathbf{U},\mathbf{C})\to \mathbf{h}^p(\mathbf{U},\mathbf{C})\ \ \  ( \mathcal{H}:L^p(\mathbf{T},\mathbf{C})\to L^p(\mathbf{T},\mathbf{C}))$$ is equal to $$\|\mathcal{H}\|_p=\left\{
                                                                                    \begin{array}{ll}
                                                                                      \tan\frac{\pi}{2 p}, & \hbox{if $p\le 2$;} \\
                                                                                      \cot\frac{\pi}{2 p}, & \hbox{if $p>2$.}
                                                                                    \end{array}
                                                                                  \right.
$$
\end{theorem}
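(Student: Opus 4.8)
The plan is to derive the upper bound by rewriting the conjugate $\tilde f$ in terms of the same holomorphic building blocks as $f$, and then chaining the two sharp inequalities already established in Theorems~\ref{kalaj} and~\ref{kalaj1}. First I would observe that
$$\tilde f = i(g - \bar h) = ig + \overline{ih} = G + \bar H, \qquad G := ig,\ H := ih,$$
so that $\tilde f$ is again a harmonic mapping of the same type in $\mathbf h^p$. The decisive point is the rotational invariance of the triple norm: since $|G|^2 + |H|^2 = |g|^2 + |h|^2$ pointwise, we obtain $\vertiii{\tilde f}_p = \vertiii{f}_p$.

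Next I would check the hypotheses. Because $h(0) = 0$ we have $H(0) = ih(0) = 0$, hence $\Re(G(0)H(0)) = 0 \le 0$ and $\Re(g(0)h(0)) = 0$; so Theorem~\ref{kalaj} applies to $f$ and Theorem~\ref{kalaj1} applies to $\tilde f = G + \bar H$. Applying Theorem~\ref{kalaj1} to $\tilde f$, then the invariance, then Theorem~\ref{kalaj} to $f$ gives
$$\|\tilde f\|_p \le \sqrt 2\,\max\Big\{\sin\tfrac{\pi}{2p},\cos\tfrac{\pi}{2p}\Big\}\,\vertiii{\tilde f}_p = \sqrt 2\,\max\Big\{\sin\tfrac{\pi}{2p},\cos\tfrac{\pi}{2p}\Big\}\,\vertiii{f}_p \le \frac{\sqrt 2\,\max\{\sin\frac{\pi}{2p},\cos\frac{\pi}{2p}\}}{\left(1 - \abs{\cos\frac{\pi}{p}}\right)^{1/2}}\,\|f\|_p.$$
It then remains only to simplify this constant. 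Splitting into the cases $p \ge 2$ and $1 < p \le 2$ and using the half-angle identities $1 - \cos\frac{\pi}{p} = 2\sin^2\frac{\pi}{2p}$ and $1 + \cos\frac{\pi}{p} = 2\cos^2\frac{\pi}{2p}$ (noting $\abs{\cos\frac{\pi}{p}}=\cos\frac{\pi}{p}$ for $p\ge 2$ and $=-\cos\frac{\pi}{p}$ for $p\le 2$), the constant collapses to $\cot\frac{\pi}{2p}$ when $p \ge 2$ and to $\tan\frac{\pi}{2p}$ when $p \le 2$, i.e.\ to $\cot\frac{\pi}{2\bar p}$ in both regimes. This is precisely the bound \eqref{hilbert} together with the piecewise expression for $\|\mathcal H\|_p$.

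For sharpness I would reduce to the classical scalar situation. Restricting $\mathcal H$ to real-valued data $\chi$, the complex Hilbert transform coincides with the ordinary conjugate-function operator, up to the additive constant $\tilde f(0)$ controlled by the identity~\eqref{hph}; hence the operator norm is bounded below by the norm of scalar conjugation, which by the sharp Pichorides inequality~\eqref{pico} equals $\cot\frac{\pi}{2\bar p}$. Combined with the upper bound this forces equality, so no smaller constant works and \eqref{hilbert} is sharp.

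The routine parts are the algebraic rewriting $\tilde f = G + \bar H$ and the trigonometric simplification of the constant; the genuinely delicate step is the sharpness argument, where one must verify that passing to real-valued functions really recovers the full scalar Pichorides extremal ratio and that the normalization hidden in $\tilde f(0)$ (handled via~\eqref{hph}) does not erode the constant. That bookkeeping, rather than any new estimate, is where I expect the main difficulty to lie.
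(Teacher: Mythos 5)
Your proposal is correct and follows essentially the same route as the paper: chain Theorem~\ref{kalaj1} applied to $\tilde f=ig+\overline{ih}$ with Theorem~\ref{kalaj} applied to $f$ via the invariance $\vertiii{\tilde f}_p=\vertiii{f}_p$, verify the sign conditions at the origin using $h(0)=0$, simplify $A_pB_p$ to $\cot\frac{\pi}{2\bar p}$, and obtain sharpness by restriction to real-valued data and Pichorides' theorem. The only cosmetic difference is that you write the chain in the cleaner order $\|\tilde f\|_p\le B_p\vertiii{\tilde f}_p=B_p\vertiii{f}_p\le A_pB_p\|f\|_p$, whereas the paper's displayed chain has the constants in a slightly garbled order.
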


\begin{proof}
Let $f=g+\bar h=u+iv $ be a harmonic mappings that belongs to $\mathbf{h}^p$. Then $\tilde f = -i (g-\bar h)=\tilde u + i \tilde v$.
From Theorem~\ref{kalaj} and Theorem~\ref{kalaj1}, in view of the fact that $\vertiii{f}_{\mathbf{h}^p}=\vertiii{\tilde f}_{\mathbf{h}^p}$ and $\Re((-i g(0))
(-i h(0)))=-\Re(g(0)h(0))=0$,  we obtain that
$$\|\tilde f\|_{\mathbf{h}^p}\le A_p \vertiii{f}_{\mathbf{h}^p}\le A_p B_p \| f\|_{\mathbf{h}^p}.$$ The theorem follows from the equation
$$A_p B_p=\frac{\sqrt{2}\cos{\frac{\pi}{2\overline{p}}}}{{\left(1-\abs{\cos\frac{\pi}{p}}\right)^{1/2}}}= \cot \frac{\pi}{2\bar{p}}. $$
\end{proof}

\begin{remark}
The condition $h(0)=0$ of Theorem~\ref{prente} is not essential. Indeed, if $h(0)\neq 0$, then $$f(z)=g(z)+\overline{h(0)}+\overline{h(z)-h(0)}= g_1(z)+\overline{h_1(z)},$$ where $h_1(0)=0$. In this case $$\tilde f(z)=-i(g_1(z)-\overline{h_1(z)})=- i \left(g(z)+\overline{h(0)}-  \overline{h(z)-h(0)}\right).$$
\end{remark}

\begin{corollary}\label{hili} Let $p>1$ and let $\chi\in L^p(\mathbf{R},\mathbf{C})$ and $f=P[\phi]$. 
Then we have  the sharp inequalities  \begin{equation}
\|\tilde \chi\|_{L^p}\le \cot \frac{\pi}{2\bar{p}}\|\chi\|_{L^p},
\end{equation}
and
\begin{equation}\label{shaq}
\|\tilde f\|_{\mathbf{h}^p}\le \cot \frac{\pi}{2\bar{p}}\|f\|_{\mathbf{h}^p}.
\end{equation}
In other words 
$$\|\mathcal{H}: L^p(\mathbf{R},\mathbf{C})\rightarrow L^p(\mathbf{R},\mathbf{C})\|=\cot \frac{\pi}{2\bar{p}}$$ and
$$\|\mathcal{H}: \mathbf{h}^p(\mathbf{H},\mathbf{C})\rightarrow \mathbf{h}^p(\mathbf{H},\mathbf{C})\|=\cot \frac{\pi}{2\bar{p}}.$$
\end{corollary}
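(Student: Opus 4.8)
The plan is to deduce both the real-line and the half-plane statements from the disk/circle result of Theorem~\ref{prente} by a transference (rescaling) argument, so that the sharp constant $\cot\frac{\pi}{2\bar p}$ is carried over intact. First I would observe that the two displayed inequalities are equivalent. Since the Poisson extension is an isometry, $\|P[\chi]\|_{\mathbf{h}^p(\mathbf{H})}=\|\chi\|_{L^p(\mathbf{R})}$, and since $\tilde f=P[\tilde\chi]$ is precisely the harmonic conjugate of $f=P[\chi]$, we have $\|\tilde f\|_{\mathbf{h}^p(\mathbf{H})}=\|\tilde\chi\|_{L^p(\mathbf{R})}$. Hence \eqref{shaq} holds if and only if the first inequality does, and both operator-norm identities at the end follow once the sharp value of $\|\mathcal{H}\|_{L^p(\mathbf{R})}$ is pinned down. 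It therefore suffices to prove the single sharp inequality $\|\tilde\chi\|_{L^p(\mathbf{R})}\le \cot\frac{\pi}{2\bar p}\,\|\chi\|_{L^p(\mathbf{R})}$ together with the optimality of the constant.

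For the upper bound I would exploit the fact that the nonperiodic Hilbert kernel is the scaling limit of the periodic one. Fix $\chi\in C_c^\infty(\mathbf{R},\mathbf{C})$, a dense subspace of $L^p$. For large $R$ set $\chi_R(\tau)=\chi(R\tau)$, viewed as a $2\pi$-periodic function once $R$ is large enough that the (rescaled) support fits inside one period. A change of variables shows that $\|\chi_R\|_{L^p(\mathbf{T})}$ and $\|\chi\|_{L^p(\mathbf{R})}$ differ only by a factor depending on $R$ and $p$ but not on $\chi$, and the identical factor relates the periodic conjugate $\widetilde{\chi_R}$ to $\mathcal{H}[\chi]$; thus the ratio $\|\tilde\chi\|_p/\|\chi\|_p$ is scale-invariant. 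Comparing the kernel in \eqref{hilbert2} with the one in the definition of $\mathcal{H}$ and using $2\tan(t/2)=t+O(t^3)$ as $t\to0$, the substitution $s=Rt$ turns $\widetilde{\chi_R}(\tau)$ into $\mathcal{H}[\chi](R\tau)$ in the limit $R\to\infty$, so that $\widetilde{\chi_R}$ is asymptotically the $R$-dilation of $\mathcal{H}[\chi]$. Applying Theorem~\ref{prente} to each $\chi_R$ gives $\|\widetilde{\chi_R}\|_{L^p(\mathbf{T})}\le \cot\frac{\pi}{2\bar p}\,\|\chi_R\|_{L^p(\mathbf{T})}$, and letting $R\to\infty$ yields the claimed inequality on $\mathbf{R}$ for all $\chi\in C_c^\infty$, hence for all of $L^p(\mathbf{R})$ by density.

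For sharpness I would run the same comparison in reverse. The circle constant is approached by near-extremizers arising from the sharpness in Theorem~\ref{prente} and Corollaries~\ref{ver2}--\ref{ver3}; transferring such a family to $\mathbf{R}$ by the inverse rescaling, which preserves the ratio $\|\tilde\chi\|_p/\|\chi\|_p$ in the limit, produces functions on $\mathbf{R}$ along which $\|\mathcal{H}[\chi]\|_{L^p}/\|\chi\|_{L^p}\to\cot\frac{\pi}{2\bar p}$. Equivalently, one may invoke de~Leeuw's transference principle for the Fourier multiplier $-i\,\mathrm{sign}(\xi)$, whose restriction to $\mathbf{Z}$ is exactly the periodic multiplier \eqref{hilbert1}: the restriction theorem gives $\|\mathcal{H}\|_{L^p(\mathbf{T})}\le\|\mathcal{H}\|_{L^p(\mathbf{R})}$, which combined with the upper bound forces equality. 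Transporting the inequality and its optimality back to the upper half-plane through the isometries of the first paragraph then completes the argument.

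The main obstacle is the limiting argument of the second and third paragraphs: one must justify that dilation followed by periodization really does send the periodic conjugate function to the line Hilbert transform in $L^p$-norm, controlling the contribution of the periodic kernel away from $t=0$ and the behaviour of $\widehat{\chi_R}(0)$ (which the convention $\mathrm{sign}(0)=1$ affects only by an additive constant of vanishing $L^p$-weight), and that this convergence is strong enough to preserve both the bound and its sharpness. The complex-valued nature of $\chi$ introduces no extra difficulty, since the rescaling and the kernel comparison are linear and act identically on real and imaginary parts; the only genuine input is the sharp circle estimate of Theorem~\ref{prente}.
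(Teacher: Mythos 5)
Your argument for the upper bound is essentially the paper's: the paper simply cites Zygmund \cite[Chapter~XVI, Theorem~3.8]{ZY} for the transference of the periodic constant of Theorem~\ref{prente} to the non-periodic Hilbert transform, and what you sketch (dilation, periodization, and the kernel comparison $2\tan(t/2)\sim t$ in the limit $R\to\infty$) is precisely the content of that cited argument, together with the same reduction of \eqref{shaq} to the boundary inequality via the isometry $\|P[\chi]\|_{\mathbf{h}^p(\mathbf{H})}=\|\chi\|_{L^p(\mathbf{R})}$. Where you genuinely diverge is the sharpness: the paper does not reverse the transference or invoke de~Leeuw, but instead quotes Pichorides \cite[Theorem~4.1]{pik}, who exhibits the constant $\cot\frac{\pi}{2\bar p}$ as sharp for \emph{real-valued} functions directly on the line; since real-valued functions form a subclass of the complex-valued ones and the constant claimed is the same, the complex sharpness is immediate. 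That route is shorter and avoids the delicate justification you flag as the main obstacle (that the dilation--periodization limit is strong enough to carry near-extremizers from $\mathbf{T}$ to $\mathbf{R}$), while your de~Leeuw alternative is also sound provided one handles the discontinuity of the multiplier $-i\,\mathrm{sign}(\xi)$ at $\xi=0$ (the paper's convention $\mathrm{sign}(0)=1$ in \eqref{hilbert1} is only a rank-one perturbation and does not affect the norm). Both approaches are correct; the paper's buys brevity by outsourcing the extremal construction, yours is more self-contained relative to the circle results already proved in the paper.
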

\begin{proof}[Proof of Corollary~\ref{hili}]
The relation \eqref{shaq}  follows from Theorem~\ref{prente} and the approach in the proof of a result of Zygmund in \cite[Chapter~XVI,~Theorem~3.8]{ZY}, where it is proved that the constant that appear in the case of periodic Hilbert transform could be taken in non-periodic Hilbert transform case as well. Moreover, as it is shown by Pichorides in \cite[Theorem~4.1]{pik}, the constant $\cot \frac{\pi}{2\bar p}$ is sharp for real valued functions, and so it is sharp for our complex functions as well.
\end{proof}

Let $\mathbf{b}^{p}$ denote the Bergman class of harmonic mappings defined on the unit disk, satisfying the condition
$$\|f\|_{\mathbf{b}^p}:=\left(\int_{\mathbf{U}}|f(z)|^p\frac{dxdy}{\pi}\right)^{1/p}<\infty.$$ For compressive study of this class we refer to the book \cite{hkz}.

By integrating the functions $U_r(z)= r|f(zr)|^p$ and $V_r(z)=r(|g(zr)|^2+|h(zr)|^2)^{p/2}$ over the unit circle $\mathbf{T}$, using the inequalities
\eqref{nesi} and \eqref{lopa}, and integrating for $r\in[0,1]$ we obtain the following result for the Bergman space $\mathbf{b}^p.$
\begin{corollary}\label{kalaj2}
Let $1<p<\infty$. Assume that $f=g+\bar h\in \mathbf{b}^p$ is a harmonic mapping on the unit disk with $\Re(g(0)h(0))= 0$
. Then we have the following inequalities
\begin{equation}\label{nesi1}\left(\int_{\mathbf{U}}(|g|^2+|h|^2)^{p/2}\right)^{1/p}\le \frac{1}{\left(1-\abs{\cos\frac{\pi}{p}}\right)^{1/2}}\left(\int_{\mathbf{U}} |g+\bar h|^p\right)^{1/p}\end{equation}
and
\begin{equation}\label{nesi2}\left(\int_{\mathbf{U}}(|g+\bar h|)^{p}\right)^{1/p}\le \sqrt{2}\sin \frac{\pi}{2\bar p}\left(\int_{\mathbf{U}} (|g|^2+ |h|^2)^{p/2}\right)^{1/p}.\end{equation}
The inequality \eqref{nesi2} does hold under weaker condition $\Re(g(0)h(0))\le 0$, and \eqref{nesi1} for $\Re(g(0)h(0))\ge 0$ and $p<3$.
\end{corollary}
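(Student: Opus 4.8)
The plan is to reduce the Bergman-space inequalities \eqref{nesi1} and \eqref{nesi2}, which are integrals over the disk $\mathbf{U}$, to the Hardy-space inequalities \eqref{nesi} and \eqref{lopa} over the circle $\mathbf{T}$ that are already available from Theorem~\ref{kalaj} and Theorem~\ref{kalaj1}. The bridge is the polar-coordinate decomposition of the normalized area measure: writing $z=r\zeta$ with $r\in[0,1]$ and $\zeta\in\mathbf{T}$, one has for any nonnegative $F$
\begin{equation*}
\int_{\mathbf{U}}F(z)\,\frac{dx\,dy}{\pi}=2\int_0^1\left(\int_{\mathbf{T}}F(r\zeta)\,d\sigma(\zeta)\right)r\,dr,
\end{equation*}
so that each Bergman integral is an average over $r$ of a circle integral of a dilate of $f$. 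This is precisely why the auxiliary functions $U_r$ and $V_r$, which already carry the Jacobian factor $r$, are introduced.

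First I would fix $r\in(0,1)$ and consider the dilated holomorphic functions $g_r(z):=g(rz)$ and $h_r(z):=h(rz)$, together with $f_r:=g_r+\overline{h_r}$. Since $g$ and $h$ are holomorphic on $\mathbf{U}$, the dilates $g_r,h_r$ are holomorphic on the larger disk $\{|z|<1/r\}$ and in particular continuous up to $\mathbf{T}$; hence $g_r,h_r\in H^p$ and $f_r\in\mathbf{h}^p$, so that Theorems~\ref{kalaj} and \ref{kalaj1} are applicable to $f_r$. The key observation is that the hypothesis on the value at the origin is dilation invariant: $g_r(0)=g(0)$ and $h_r(0)=h(0)$, whence
\begin{equation*}
\Re\bigl(g_r(0)h_r(0)\bigr)=\Re\bigl(g(0)h(0)\bigr).
\end{equation*}
Thus the sign condition assumed for $f$ transfers verbatim to every dilate $f_r$, and the permissible regimes of $p$ carry over unchanged: the equality case $\Re(g(0)h(0))=0$ in general, the relaxation $\Re(g(0)h(0))\ge 0$ for $p<3$ in \eqref{nesi1}, and $\Re(g(0)h(0))\le 0$ in \eqref{nesi2}.

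Next, applying inequality \eqref{nesi} of Theorem~\ref{kalaj} to $f_r$ and raising to the $p$-th power gives, for each $r$,
\begin{equation*}
\int_{\mathbf{T}}\bigl(|g(r\zeta)|^2+|h(r\zeta)|^2\bigr)^{p/2}d\sigma(\zeta)\le\frac{1}{\left(1-\abs{\cos\frac{\pi}{p}}\right)^{p/2}}\int_{\mathbf{T}}|g(r\zeta)+\overline{h(r\zeta)}|^p\,d\sigma(\zeta),
\end{equation*}
which after multiplication by $r$ reads $\int_{\mathbf{T}}V_r\,d\sigma\le(1-\abs{\cos\frac{\pi}{p}})^{-p/2}\int_{\mathbf{T}}U_r\,d\sigma$. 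Multiplying this pointwise-in-$r$ inequality by $2$ and integrating over $r\in[0,1]$, and then recognizing both sides via the polar decomposition as the corresponding Bergman integrals, yields \eqref{nesi1} upon taking $p$-th roots. The argument for \eqref{nesi2} is identical, now feeding \eqref{lopa} of Theorem~\ref{kalaj1} into the same machine and using the constant identity $\sqrt{2}\max\{\sin\frac{\pi}{2p},\cos\frac{\pi}{2p}\}=\sqrt{2}\sin\frac{\pi}{2\bar p}$ recorded in the proof of Corollary~\ref{ver3}.

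I expect no serious obstacle: the whole proof is a slicewise application of the circle estimates followed by integration in the radial variable, and since all integrands are nonnegative the passage of the inequality through the $r$-integration is immediate, with no convergence subtlety beyond the finiteness guaranteed by $f\in\mathbf{b}^p$. The only point demanding genuine care is the one highlighted above, namely verifying that the origin condition $\Re(g(0)h(0))$ is preserved under dilation so that the hypotheses of Theorems~\ref{kalaj} and \ref{kalaj1} are legitimately met for every $f_r$, and correspondingly keeping track of which sign hypothesis and which range of $p$ attaches to \eqref{nesi1} as opposed to \eqref{nesi2}.
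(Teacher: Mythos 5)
Your proposal is correct and is essentially the paper's own argument: the paper likewise applies the circle inequalities \eqref{nesi} and \eqref{lopa} to the dilates via the functions $U_r(z)=r|f(zr)|^p$ and $V_r(z)=r(|g(zr)|^2+|h(zr)|^2)^{p/2}$ and then integrates in $r$ using the polar decomposition of the normalized area measure. Your explicit check that $\Re(g_r(0)h_r(0))=\Re(g(0)h(0))$, so the origin hypothesis and the admissible ranges of $p$ transfer to every dilate, is exactly the point that makes the reduction legitimate, and the paper leaves it implicit.
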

\begin{remark}
We were not able to check if the  inequalities \eqref{nesi1} and \eqref{nesi2} are sharp or not for $p\neq 2$.  We want to emphasis the following fact. Some well-known  extremal functions that works for Hardy space, are not suitable for the Bergman space. The following example suggested by A. Calderon (see \cite{pik}) shows that \eqref{aleks} and \eqref{eqeq} are sharp. Namely if $g(z)=\left(\frac{1+z}{1-z}\right)^{2\gamma/\pi}$, $\abs{\arg\frac{1+z}{1-z}}\le \frac{\pi}{2}$, and $\gamma<\frac{\pi}{2p}$, then $g=u +iv\in h^p$. Further $|u|=\tan \gamma |v|$ almost everywhere on $\mathbf{T}$, but $|u|-\tan \gamma|v|>0$  everywhere on $\mathbf{U}$. This is why this example works for Hardy space but not for Bergman space.

\end{remark}
\subsection{Application to the isoperimetric inequality}
The starting point of this subsection  is the well known isoperimetric
inequality for Jordan domains and isoperimetric inequality for
minimal surfaces due to Carleman \cite{tc}. In that paper Carleman,
among the other results proved that if $u$ is harmonic and smooth in
$\overline {\mathbf{ U}}$ then $$\int_{\mathbf{ U}}e^{2u}dx dy\le
\frac{1}{4\pi}(\int_0^{2\pi}e^u dt)^2.$$ By using a similar approach
as Carleman, Strebel (\cite{ks}) proved the isoperimetric inequality
for holomorphic functions; that is if $f\in H^1(\mathbf{ U})$ then
\begin{equation}\label{isoper}\int_{\mathbf{ U}}|f(z)|^2 dxdy \le \frac{1}{4\pi} (\int_{\mathbf{T}}|f(e^{it})|dt)^2.\end{equation}
By using the normalized measures on $\mathbf{T}$ and $\mathbf{U}$, respectively, the previous inequality can be written in the form

\begin{equation}\label{isoper}\int_{\mathbf{ U}}|f(z)|^2  \le \ \left(\int_{\mathbf{T}}|f(z)|\right)^2.\end{equation}
This inequality has been proved
independently by Mateljevi\'c and Pavlovi\'c (\cite{mp}). In
\cite{hw}, F. Hang, X. Wang, X. Yan have made a certain generalizations for the space.

 Now we prove the following  theorem
\begin{theorem}\label{isop}
Let $f$ be a complex harmonic mapping defined on the unit disk and assume that $n\ge 2 $ is a positive integer. Assume that $f\in \mathbf{h}^{n}$, then $f\in \mathbf{b}^{2n}$ and we have the inequality \begin{equation}\label{est}\|f\|_{\mathbf{b}^{2n}}\le \frac{1}{2} \csc\left[\frac{\pi }{4 n}\right]\|f\|_{\mathbf{h}^n}.\end{equation}
\end{theorem}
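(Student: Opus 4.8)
The plan is to bound $\|f\|_{\mathbf{b}^{2n}}$ through a chain of three inequalities, using the integrality of $n$ in an essential way. I normalise the canonical decomposition so that $h(0)=0$; then $\Re(g(0)h(0))=0$, and the side conditions of Theorem~\ref{kalaj} and of Corollary~\ref{kalaj2} (which require $\Re(g(0)h(0))=0$, resp.\ $\le 0$) hold automatically. To dispense with a priori finiteness I would carry out the estimates on the dilations $f_r(z)=f(rz)$, $0<r<1$ (for which all integrals converge), and let $r\to 1$ using the monotonicity of the integral means $M_{2n}(f,\cdot)$; this simultaneously yields $f\in\mathbf{b}^{2n}$.

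First I would pass from $f$ to the modulus quantity on the disk by the Bergman estimate \eqref{nesi2} of Corollary~\ref{kalaj2}. Being the Bergman analogue of \eqref{lopa}, its constant at $p=2n$ is $\sqrt2\max\{\sin\frac{\pi}{4n},\cos\frac{\pi}{4n}\}=\sqrt2\cos\frac{\pi}{4n}$ (as $2n\ge 4$), so that
\begin{equation*}
\|f\|_{\mathbf{b}^{2n}}\le\sqrt2\cos\tfrac{\pi}{4n}\,\Bigl(\int_{\mathbf{U}}(|g|^2+|h|^2)^{n}\Bigr)^{1/(2n)}.
\end{equation*}
The heart of the argument is the next step, an isoperimetric inequality for the modulus quantity itself. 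Since $n\in\mathbf{N}$, the binomial theorem gives $(|g|^2+|h|^2)^n=\sum_{k=0}^{n}\binom{n}{k}\bigl|g^{\,n-k}h^{\,k}\bigr|^2$, and each factor $g^{\,n-k}h^{\,k}$ is \emph{holomorphic}. Applying the holomorphic isoperimetric inequality \eqref{isoper} termwise, and then Minkowski's inequality to the $\mathbf{R}^{n+1}$-valued map $\zeta\mapsto\bigl(\binom{n}{k}^{1/2}|g(\zeta)|^{n-k}|h(\zeta)|^{k}\bigr)_{k}$, whose Euclidean length is exactly $(|g|^2+|h|^2)^{n/2}$, I obtain
\begin{equation*}
\int_{\mathbf{U}}(|g|^2+|h|^2)^n\le\sum_{k=0}^{n}\binom{n}{k}\Bigl(\int_{\mathbf{T}}|g|^{\,n-k}|h|^{\,k}\Bigr)^2\le\Bigl(\int_{\mathbf{T}}(|g|^2+|h|^2)^{n/2}\Bigr)^2=\vertiii{f}_n^{\,2n}.
\end{equation*}

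Finally I would return to $\|f\|_{\mathbf{h}^n}$ by \eqref{nesi} of Theorem~\ref{kalaj}, whose constant is $A_n=(1-\cos\frac{\pi}{n})^{-1/2}=(\sqrt2\,\sin\frac{\pi}{2n})^{-1}$ (using $1-\cos\frac{\pi}{n}=2\sin^2\frac{\pi}{2n}$). Combining the three displays and using $\sin\frac{\pi}{2n}=2\sin\frac{\pi}{4n}\cos\frac{\pi}{4n}$,
\begin{equation*}
\|f\|_{\mathbf{b}^{2n}}\le\sqrt2\cos\tfrac{\pi}{4n}\,\vertiii{f}_n\le\sqrt2\cos\tfrac{\pi}{4n}\,A_n\,\|f\|_{\mathbf{h}^n}=\frac{\cos\frac{\pi}{4n}}{2\sin\frac{\pi}{4n}\cos\frac{\pi}{4n}}\,\|f\|_{\mathbf{h}^n}=\frac12\csc\frac{\pi}{4n}\,\|f\|_{\mathbf{h}^n},
\end{equation*}
which is \eqref{est}.

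The main obstacle is precisely the middle step: it is here that the hypothesis $n\in\mathbf{N}$ is indispensable, for only when $n$ is an integer do the binomial factors $g^{\,n-k}h^{\,k}$ become holomorphic and hence admissible in \eqref{isoper}; for non-integer exponents this route to a modulus isoperimetric inequality collapses. A secondary but genuine difficulty is the exact bookkeeping of the sharp constant — one must read off the constant of \eqref{nesi2} at $p=2n$ as $\sqrt2\cos\frac{\pi}{4n}$ and pair it with $A_n$ so that the factor $\cos\frac{\pi}{4n}$ cancels against the one hidden in $\sin\frac{\pi}{2n}$, leaving the stated cosecant.
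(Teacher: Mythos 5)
Your argument is correct and yields exactly the constant $\tfrac12\csc\frac{\pi}{4n}$, but it follows a genuinely different route from the paper's. The paper expands $|f|^{2n}=(|g|^2+|h|^2+2\Re(gh))^n$ binomially over $\mathbf{U}$, applies H\"older to each term, controls $\int_{\mathbf{U}}|2\Re(gh)|^n$ by $\cos^n\frac{\pi}{2n}\int_{\mathbf{U}}|2gh|^n$ with a Verbitsky-type estimate, and only then invokes Lemma~\ref{ipl} and Theorem~\ref{kalaj}; you compress the first two of these stages into a single application of the Bergman form \eqref{nesi2} of Theorem~\ref{kalaj1}, which is cleaner and more modular. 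Two remarks. First, you correctly take the constant of \eqref{nesi2} at $p=2n$ to be $\sqrt2\cos\frac{\pi}{4n}$, i.e.\ the constant $\sqrt2\max\{\sin\frac{\pi}{2p},\cos\frac{\pi}{2p}\}=\sqrt2\cos\frac{\pi}{2\bar p}$ of \eqref{lopa}; the factor $\sqrt2\sin\frac{\pi}{2\bar p}$ actually printed in \eqref{nesi2} is less than $1$ for $p>2$ (take $h\equiv0$ to see it cannot be right) and is evidently a misprint, so your reading is the defensible one. Second, your middle step reproves Lemma~\ref{ipl} for integer exponents --- binomial expansion, Strebel's inequality \eqref{isoper} applied termwise to the holomorphic functions $g^{n-k}h^{k}$, and Minkowski's inequality in $\ell^2$ --- which is valid and more elementary than the log-subharmonicity argument; but the paper's Lemma~\ref{ipl} already gives $\int_{\mathbf{U}}(|a|^2+|b|^2)^{2p}\le\bigl(\int_{\mathbf{T}}(|a|^2+|b|^2)^{p}\bigr)^2$ for \emph{every} positive $p$. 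Hence the integrality of $n$ is not, as you claim, indispensable to your scheme: replacing your middle step by Lemma~\ref{ipl} makes the chain \eqref{nesi2} $\to$ Lemma~\ref{ipl} $\to$ \eqref{nesi} work verbatim for every real $n\ge2$, which would in fact settle the question left open in the remark following the theorem. The dilation argument for a priori finiteness, the verification of the hypotheses via the normalization $h(0)=0$, and the final half-angle computation are all in order.
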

\begin{remark}
 The proofs of the same statement for $n=2$ and $n=4$ can be found in \cite{ka1} and in \cite{kaba} respectively (where different approaches used, but applicable only for those two specific cases).

The proof here works only for positive integers $n\ge 2$, but probably the same estimate is true for every positive number $n>2$. On the other hand,  we where not able to check if the inequality \eqref{est} is sharp.
\end{remark}
A positive real function $u$ is called log-subharmonic, if $\log u$ is subharmonic.
First we formulate a lemma whose proof can be also deduced from \cite[Corollary~1.6.8]{lars}.
\begin{lemma}
The function $|a|^2+|b|^2$ is log-subharmonic, provided that $a$ and $b$ are analytic.
\end{lemma}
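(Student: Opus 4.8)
The plan is to show that $\varphi:=\log(|a|^2+|b|^2)$ is subharmonic by exhibiting it as a decreasing limit of \emph{smooth} subharmonic functions, for each of which subharmonicity amounts to the pointwise inequality $\Delta(\cdot)\ge 0$. Writing $\Delta=4\,\partial_z\partial_{\bar z}$ and setting $u:=|a|^2+|b|^2=a\bar a+b\bar b$, the first step is to record, using that $a,b$ are holomorphic (so $\partial_{\bar z}a=\partial_{\bar z}b=0$), the identities $\partial_{\bar z}u=a\overline{a'}+b\overline{b'}$ and $\partial_z u=\bar a\,a'+\bar b\,b'$.

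To avoid the common zeros of $a$ and $b$ at the outset, I would work with the regularization $u_\epsilon:=u+\epsilon$ for $\epsilon>0$, which is smooth and strictly positive on the whole disk. A direct computation then gives
$$\partial_z\partial_{\bar z}\log u_\epsilon=\frac{(|a'|^2+|b'|^2)\,u_\epsilon-|a\overline{a'}+b\overline{b'}|^2}{u_\epsilon^2}.$$
Since $u_\epsilon\ge u=|a|^2+|b|^2$, the numerator is bounded below by
$$(|a'|^2+|b'|^2)(|a|^2+|b|^2)-|a\overline{a'}+b\overline{b'}|^2,$$
which is nonnegative: it is precisely the Cauchy--Schwarz expression $|\mathbf{v}|^2|\mathbf{w}|^2-|\langle\mathbf{v},\mathbf{w}\rangle|^2$ for the vectors $\mathbf{v}=(a,b)$ and $\mathbf{w}=(a',b')$ in $\mathbf{C}^2$ (equivalently, the Gram determinant of $\mathbf{v}$ and $\mathbf{w}$). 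Hence $\Delta\log u_\epsilon\ge 0$, so each $\log u_\epsilon$ is subharmonic.

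To finish, I would let $\epsilon\downarrow 0$: the functions $\log u_\epsilon$ decrease pointwise to $\log u=\varphi$, and a decreasing limit of subharmonic functions is subharmonic provided it is not identically $-\infty$, which holds here unless $a\equiv b\equiv 0$ (a trivial case). Therefore $\varphi$ is subharmonic and $u=|a|^2+|b|^2$ is log-subharmonic, as claimed; the same computation shows this extends verbatim to $|a_1|^2+\dots+|a_n|^2$, recovering the cited statement from Hörmander.

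The only genuinely delicate point is the behaviour at the common zeros of $a$ and $b$, where $\varphi=-\infty$. The regularization $u\mapsto u+\epsilon$ is exactly what sidesteps this, converting the problem into a clean pointwise Laplacian estimate plus a standard monotone-limit argument; everything else reduces to the elementary observation that the numerator above is a Cauchy--Schwarz, i.e.\ Gram, determinant.
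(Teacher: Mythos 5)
Your proof is correct and follows essentially the same route as the paper: the identical computation of $\partial_z\partial_{\bar z}\log(|a|^2+|b|^2)$ followed by the Cauchy--Schwarz (Gram determinant) observation that the numerator is nonnegative. The only difference is your regularization $u+\epsilon$ and the decreasing-limit argument, which is a welcome refinement handling the common zeros of $a$ and $b$ that the paper's proof passes over silently.
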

\begin{proof}
We need to show that $f(z)=\log(|a|^2+|b|^2)$ is subharmonic. By calculation we find $$
f_z= \frac{a'\bar a+b'\bar b}{|a|^2+|b|^2}$$ and so
\[\begin{split}f_{z\bar z}&= \frac{a'\bar a'+b'\bar b'}{|a|^2+|b|^2}-\frac{a'\bar a+b'\bar b}{|a|^2+|b|^2}
\frac{a\bar a'+b\bar b'}{|a|^2+|b|^2}\\&=\frac{(|a'|^2+|b'|^2)(|a|^2+|b|^2)-|\bar aa'+\bar bb'|^2}{(|a|^2+|b|^2)^2},\end{split}\] which is clearly positive.

\end{proof}

Now the isoperimetric inequality for log-subharmonic functions (e.g. \cite[Lemma~2.2]{ka}), states that, if $u$ is positive log-subharmonic function, then  $$\int_{\mathbf{U}}u^2\le \left(\int_{\mathbf{T}}u\right)^{2}.$$
Here as before, $$\int_{\mathbf U}f:=\frac{1}{\pi}\int_{\mathbf U}f(z){dxdy},\ \ \ z=x+iy,$$ and
$$\int_{\mathbf T}f:=\frac{1}{2\pi}\int_{\mathbf T}f(z){|dz|},\ \ \ z=x+iy.$$
Thus we infer that
\begin{lemma}\label{ipl}
For every positive number $p$ and analytic functions $a$ and $b$ defined on the unit disk $U$  we have that
$$\int_{\mathbf{U}}(|a|^2+|b|^2)^{2p}\le \left(\int_{\mathbf{T}}(|a|^2+|b|^2)^{p}\right)^2.$$
\end{lemma}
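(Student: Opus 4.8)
The plan is to reduce the statement directly to the isoperimetric inequality for log-subharmonic functions recorded just above, applied to the single function $u=(|a|^2+|b|^2)^p$. First I would recall that the preceding lemma guarantees that $|a|^2+|b|^2$ is log-subharmonic whenever $a$ and $b$ are analytic; that is, $\log(|a|^2+|b|^2)$ is subharmonic on $\mathbf{U}$.

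The key step is then to observe that raising to a positive power preserves log-subharmonicity. Indeed, $\log\left((|a|^2+|b|^2)^p\right)=p\,\log(|a|^2+|b|^2)$, and a positive constant multiple of a subharmonic function is again subharmonic. Hence $u=(|a|^2+|b|^2)^p$ is a positive log-subharmonic function on $\mathbf{U}$ for every $p>0$, so the exponent $p$ is simply absorbed into the logarithm and no new work is needed here.

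With this in hand I would invoke the isoperimetric inequality $\int_{\mathbf{U}}u^2\le\left(\int_{\mathbf{T}}u\right)^2$ for this particular $u$. Substituting $u=(|a|^2+|b|^2)^p$ yields $u^2=(|a|^2+|b|^2)^{2p}$ on the left and $(|a|^2+|b|^2)^{p}$ under the boundary integral on the right, which is exactly the asserted inequality.

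The only points requiring a word of care are degeneracy and boundary interpretation rather than any genuine obstacle. If $a$ and $b$ vanish identically then both sides are zero and there is nothing to prove; otherwise $u>0$ away from the isolated common zeros of $a$ and $b$, and one reads $\log u$ as the usual upper semicontinuous $[-\infty,\infty)$-valued subharmonic function, so the isoperimetric inequality applies verbatim. I expect no substantive difficulty, since all the content has already been front-loaded into the log-subharmonicity lemma and the isoperimetric inequality for log-subharmonic functions; this lemma is essentially just the remark that those two facts combine once the power $p$ is moved inside the logarithm.
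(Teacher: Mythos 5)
Your proposal is correct and follows exactly the route the paper intends: the paper derives Lemma~\ref{ipl} from the log-subharmonicity of $|a|^2+|b|^2$ together with the isoperimetric inequality for log-subharmonic functions, with the observation that the power $p$ is absorbed into the logarithm left implicit in its ``thus we infer.'' You have simply made that one-line step explicit, so there is nothing to add.
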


\begin{proof}[Proof of Theorem~\ref{isop}]
Without loos of generality  assume that $f(z) = g(z)+\overline{h(z)}$, where $h(0)=0$, and $g$ and $h$ are holomorphic on the unit disk.

Let
$$L=\int_{\mathbf{U}} (|g + \bar h|^2)^n=\int_{\mathbf{U}} (|g|^2 +  |h|^2+2\Re (gh))^n.$$
Then

\[\begin{split}
L&
 = \sum_{k=0}^n \binom{n}{k}\int_{\mathbf{U}} (|g|^2+|h|^2)^k (2\Re(gh))^{n-k}\\&\le \sum_{k=0}^n\binom{n}{k}\int_{\mathbf{U}} ((|g|^2+|h|^2)^n)^{k/n}
(\int_{\mathbf{U}} |2\Re(gh)|^n)^{(n-k)/n}.
\end{split}\]
Let $p\ge 2$ and let  $E_p = \cos\frac{\pi}{2p}.$

From Lemma~\ref{ipl} and Corollary~\ref{kalaj2} and Theorem~\ref{kalaj1} (Corollary~\ref{ver3}) we have
\[\begin{split}L  &  \le \sum_{k=0}^n \binom{n}{k} E_n^{n-k} (\int_{\mathbf{U}} (|g|^2+|h|^2)^n)^{k/n} (\int_{\mathbf{U}} (2|gh|)^n)^{(n-k)/n}
\\&\le \sum_{k=0}^n \binom{n}{k} E_n^{n-k} (\int_{\mathbf{T}} (|g|^2+|h|^2)^{n/2})^{2k/n} (\int_{\mathbf{T}} (2|gh|)^{n/2})^{2(n-k)/n}
\\&\le \sum_{k=0}^n \binom{n}{k} E_n^{n-k} (\int_{\mathbf{T}} (|g|^2+|h|^2)^{n/2})^{2k/n} (\int_{\mathbf{T}} (|g|^2+|h|^2)^{n/2})^{2(n-k)/n}
\\&= \sum_{k=0}^n \binom{n}{k} E_n^{n-k}  (\int_{\mathbf{T}} (|g|^2+|h|^2)^{n/2})^{2}
\\& =  \sum_{k=0}^n \binom{n}{k} E_n^{n-k} \frac{1}{\left(1-\abs{\cos\frac{\pi}{p}}\right)^{n}} (\int_{\mathbf{T}} |g+\bar h|^n)^{2}
\\& =  \frac{(1+E_n)^n}{\left(1-{\cos\frac{\pi}{n}}\right)^{n}} (\int_{\mathbf{T}} (|g+\bar h|^n))^{2}
\\& =  \frac{(1+\cos\frac{\pi}{2n})^n}{\left(1-{\cos\frac{\pi}{n}}\right)^{n}} (\int_{\mathbf{T}} |g+\bar h|^n)^{2}
.\end{split}\]
Thus $$\int_{\mathbf{U}} (|g + \bar h|^2)^n\le \frac{(1+\cos\frac{\pi}{2n})^n}{\left(1-{\cos\frac{\pi}{n}}\right)^{n}} (\int_{\mathbf{T}} |g+\bar h|^n)^{2}.$$ Further
$$\frac{1+\cos\frac{\pi}{2n}}{1-{\cos\frac{\pi}{n}}}=\frac{\cos ^2\frac{\pi}{4n }}{\sin^2\frac{\pi}{2n}}=\frac{1}{2\sin^2\frac{\pi}{4n}}.$$
This finishes the proof.
\end{proof}

\section{Strategy of the proofs}
As the authors of the paper did in \cite{verb1}, we use "pluri-subharmonic minorant".
\begin{definition}
A upper semi-continuous real function $u$ is called subharmonic in an open set $\Omega$ of complex plane, if for every compact subset $K$ of $\Omega$ and for every harmonic function $f$ defined on $K$, the inequality $u(z)\le f(z)$ for $z\in \partial K$ implies that $u(z)\le f(z)$ on $K$.
\end{definition}
A property which characterizes the subharmonic mappings is the sub-mean value property which states that. If $u$ is a subharmonic function defined on a domain $\Omega$, then for every closed disk $\overline{D(z_0,r)}\subset \Omega$, we have the inequality $$u(z_0)\le \frac{1}{2\pi r}\int_{|z-z_0|=r} {u(z)|dz|}.$$
\begin{definition} A function $u$ defined in an open set $\Omega\subset \mathbf{C}^n$ with values
in $[ - \infty, +\infty)$ is called plurisubharmonic if
\begin{enumerate}
\item $u$ is semicontinuous from above;
\item For arbitrary  $z,w \in\mathbf{C}^n$, the function $t\to  u(z + tw)$ is
subharmonic in the part $\mathbf{C}$ where it is defined.
\end{enumerate}

\end{definition}

\begin{definition} A pluri-subharmonic function $f$ is called a pluri-subharmonic minorant of $g$ on $\Omega$ if $f(z,w)\le g(z,w)$ for $(z,w)\in \Omega\subset\mathbf{C}^2$, and $f(z_0,w_0)=g(z_0,w_0)$, for a point $(z_0,w_0)\in \Omega.$\end{definition}

Let $p>1$.  The main task in the proof of main results is to find optimal positive constants $a_p$, $b_p$, $c_p$ and $d_p$ and pluri-subharmonic functions $\mathcal{F}_p(z,w)$ and $\mathcal{G}_p(z,w)$ for $z,w\in \mathbf{C}$, vanishing for $z=0$ or $w=0$, so that the inequalities
\begin{equation}\label{calf}
(|w|^2+|z|^2)^{p/2}\le a_p |w+\bar z|^p - b_p \mathcal{F}_p(z,w)
\end{equation}
\begin{equation}\label{calg}
|w+\bar z|^p\le c_p (|w|^2+|z|^2)^{p/2}  - d_p \mathcal{G}_p(z,w)
\end{equation}
are sharp.
\section{Proof of Theorem~\ref{kalaj}}
To begin assume that $1<p<2$. The other cases and constants that appear in this paper are found in a similar way, sometimes using Mathematica. The proofs of inequalities that we state are sometimes very technical, but also detailed.
We want to obtain a minimal positive constant $a_p$ and a positive constant $b_p$ so that
$$(|w|^2+|z|^2)^{p/2}\le a_p |w+\bar z|^p - b_p \mathcal{F}_p(z,w),$$ for  all complex numbers $z$ and $w$, where
$$\mathcal{F}_p(z,w)=\Re((wz)^{p/2}).$$
Let us chose  $w=1$, $z=1$.

Then we find the smallest positive constant $a_p$ in the inequality

$$-2^{p/2}+a_p (2+2 \cos t)^{p/2}-b_p \cos\left[\frac{p t}{2}\right]\ge 0.$$
Let
$$\omega(t):=-2^{p/2}+a (2+2 \cos t)^{p/2}-b \cos\left[\frac{p t}{2}\right].$$

We should find an appropriate positive constant $b=b_p$ and the minimal constant $a=a_p$ so that
$\omega(t)\ge 0 $ for $t\in[0,2\pi]$.
First of all $$\omega'(t_\circ)=-a p (2+2 \cos[t_\circ])^{-1+\frac{p}{2}} \sin[t_\circ]+\frac{1}{2} b p \sin\left[\frac{p t_\circ}{2}\right].$$
So if $\omega'(t_\circ)=0$, then $$b=\frac{a (2+2 \cos[t_\circ])^{p/2} \csc\left[\frac{p t_\circ}{2}\right] \sin[t_\circ]}{1+\cos[t_\circ]}.$$ (The condition $\omega'(t_\circ)=0$ means that $t_\circ$ should be the local and global minimum of $\omega$).

Now for such $t_\circ$, we have that $$\omega(t_\circ)=-2^{p/2}+a (2+2 \cos[t_\circ])^{p/2}-\frac{a (2+2 \cos[t_\circ])^{p/2} \cot\left[\frac{p t_\circ}{2}\right] \sin[t_\circ]}{1+\cos[t_\circ]}.$$
We chose $a$ and $t_\circ$ by the following two conditions
$$-2^{p/2}+a (2+2 \cos[t_\circ])^{p/2}=0$$ and $$\frac{a (2+2 \cos[t_\circ])^{p/2} \cot\left[\frac{p t_\circ}{2}\right] \sin[t_\circ]}{1+\cos[t_\circ]}=0.$$
(Those two conditions mean that $\omega(t_\circ)=0$ is the minimum of $\omega$).

The only solutions are $t_\circ=\pm \frac{\pi}{p}$ and $$a=\left(1+\cos\frac{\pi }{p}\right)^{-p/2}.$$ Further we find $$b=2^{p/2} \tan\frac{\pi }{2 p}.$$
Then we prove
\begin{lemma}\label{nice} Let $1<p\le 2$. For every two complex numbers $z$ and $w$ we have
\begin{equation}\label{fsh}(|w|^2+|z|^2)^{p/2}\le a_p |w+\bar z|^p - b_p \mathcal{F}_p(z,w),\end{equation} where
$$a_p=\left(1+\cos\frac{\pi }{p}\right)^{-p/2}, \ \ \ \ \ b_p=2^{p/2} \tan\frac{\pi }{2 p}$$ and
$$\mathcal{F}_p(z,w)=\Re((wz)^{p/2});$$ where for $\zeta=\rho e^{i\theta}$, \begin{equation}\label{rez}\Re(\zeta^{p/2}):=\rho^{p/2}\left\{
\begin{array}{ll}
\cos\left[\frac{\theta p}{2}\right], & \hbox{if $|\theta|\le \pi$;} \\
\cos\left[\frac{\theta p}{2}-p\pi\right], & \hbox{if $\pi \le \theta\le 2\pi$;} \\
\cos\left[\frac{\theta p}{2}+p\pi\right], & \hbox{if $-2\pi \le \theta\le -\pi$.}
\end{array}
\right.\end{equation}
Moreover $\Re(\zeta^{p/2})$ is subharmonic on $\mathbf{C}$ and $\mathcal{F}_p(z,w)$ is pluri-subharmonic on $\mathbf{C}^2.$ Furthermore the equality in \eqref{fsh} is attained if and if $|w|=|z|$ and $\arg(wz)=\frac{\pi}{p}\mod \pi$.
\end{lemma}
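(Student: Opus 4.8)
The plan is to isolate the single complex variable that really controls \eqref{fsh} and then fall back on the real-variable estimate $\omega(t)\ge 0$ prepared above; the (pluri)subharmonicity claims I would settle first, since they feed the main argument. Write $\phi(\zeta)=\Re(\zeta^{p/2})$ with the branch fixed by \eqref{rez}. On $\mathbf C\setminus(-\infty,0]$ it is the real part of a holomorphic function, hence harmonic, and the origin is a removable point (there $\phi$ is continuous and bounded), so subharmonicity is a question only on the negative real axis. Since $\phi$ is continuous across it, the distributional Laplacian of $\phi$ is a measure on $(-\infty,0]$ whose density equals the sum of the two outward normal derivatives; a polar-coordinate computation from $\phi=\rho^{p/2}\cos(p\theta/2)$ gives this sum as $p\,\rho^{p/2-1}\sin(p\pi/2)$, which is $\ge 0$ precisely because $1<p\le 2$ forces $\sin(p\pi/2)\ge 0$. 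Thus $\phi$ is subharmonic on $\mathbf C$. The plurisubharmonicity of $\mathcal F_p(z,w)=\phi(zw)$ is then immediate from the standard fact that a subharmonic function composed with a holomorphic map is subharmonic: along any complex line $t\mapsto(z_0+ta,\,w_0+tb)$ the product $zw$ is a holomorphic (quadratic) function of $t$, and $\phi$ of it is subharmonic in $t$.

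For \eqref{fsh} itself, the key observation is that every quantity in sight depends on $(z,w)$ only through $m:=|z|^2+|w|^2$ and $\zeta:=wz$, since $|w+\bar z|^2=m+2\Re\zeta$ and $\Re((wz)^{p/2})=\phi(\zeta)$. As $(z,w)$ ranges over $\mathbf C^2$, the pair $(m,\zeta)$ ranges exactly over $m>0$, $|\zeta|\le m/2$, the constraint being AM–GM $|z||w|\le(|z|^2+|w|^2)/2$, with equality iff $|z|=|w|$. As all terms are homogeneous of degree $p$ under $(z,w)\mapsto(\lambda z,\lambda w)$, I may scale $m=1$, so that \eqref{fsh} becomes $G(\zeta)\ge 0$ on the closed disk $|\zeta|\le\tfrac12$, where $G(\zeta)=a_p(1+2\Re\zeta)^{p/2}-b_p\,\phi(\zeta)-1$. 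Now $1+2\Re\zeta$ is harmonic and strictly positive on $|\zeta|<\tfrac12$, and $t\mapsto t^{p/2}$ is concave increasing for $p\le 2$, so $(1+2\Re\zeta)^{p/2}$ is superharmonic; combined with $-b_p\phi$ superharmonic (from the previous paragraph), $G$ is superharmonic on the open disk. By the minimum principle for superharmonic functions, $G$ attains its minimum over $\overline{D(0,\tfrac12)}$ on the circle $|\zeta|=\tfrac12$.

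It therefore suffices to check $G\ge 0$ on $|\zeta|=\tfrac12$, and here the computation collapses onto exactly the function built above: writing $\zeta=\tfrac12 e^{i\theta}$ one gets $2^{p/2}G(\tfrac12 e^{i\theta})=\omega(\theta)$, so everything reduces to $\omega(\theta)\ge 0$ for $\theta\in[0,\pi]$ (evenness disposes of the rest). This one-variable inequality is the main obstacle. The constants $a_p,b_p$ and the node $t_\circ=\pi/p$ were manufactured so that $\omega(\pi/p)=\omega'(\pi/p)=0$, but one still has to prove that this critical point is the \emph{global} minimum, i.e. that $\omega$ has no other, lower, critical value on $[0,\pi]$. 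Using $2+2\cos\theta=4\cos^2(\theta/2)$ I would recast the claim as $\cos^{-p}c\,\cos^{p}\psi-\tan c\,\cos(p\psi)-1\ge 0$ for $\psi=\theta/2\in[0,\tfrac\pi2]$ and $c=\tfrac{\pi}{2p}$, then control the sign of the derivative to show it is negative on $(0,c)$ and positive on $(c,\tfrac\pi2)$; this is the delicate, essentially computational step (the part flagged as carried out "sometimes using Mathematica").

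Finally, the equality case is read off from the chain. Equality in \eqref{fsh} forces equality in AM–GM, hence $|z|=|w|$, so that $|\zeta|=\tfrac12$ after scaling; and it forces $G$ to vanish at a boundary point, which by the one-variable analysis occurs only where $\omega(\theta)=0$, i.e. at $\theta=\arg(wz)\equiv \pi/p \pmod{\pi}$. The strict superharmonicity of $G$ (the concavity of $t^{p/2}$ is strict for $p<2$) rules out any interior equality, so these are the only extremal configurations. I expect the branch bookkeeping in \eqref{rez} to be the only fussy point in turning the nodes of $\omega$ into the stated congruence.
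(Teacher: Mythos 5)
Your argument is correct in substance but reaches the decisive one-variable inequality by a genuinely different route than the paper. The paper divides \eqref{fsh} by $\max\{|z|,|w|\}^p$ and proves (Lemma~\ref{posa}) that the resulting function of $r=\min\{|z|/|w|,|w|/|z|\}$ is monotone on $[0,1]$ via an explicit and rather technical estimate of $P'(r)$, thereby pushing the minimum to $r=1$; you instead observe that every term depends on $(z,w)$ only through $m=|z|^2+|w|^2$ and $\zeta=wz$, that after scaling $m=1$ the function $G(\zeta)=a_p(1+2\Re\zeta)^{p/2}-b_p\Re(\zeta^{p/2})-1$ is superharmonic on $|\zeta|<\tfrac12$ (a concave increasing function of a positive harmonic function, plus $-b_p$ times a subharmonic function), and you invoke the minimum principle to push the minimum to $|\zeta|=\tfrac12$, i.e.\ to $|z|=|w|$. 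This is cleaner: it collapses the paper's three case distinctions \eqref{first}--\eqref{third} into the single remark that \eqref{rez} defines a genuine ($2\pi$-periodic in $\theta$) function of $\zeta$, and it reuses the subharmonicity of $\Re(\zeta^{p/2})$ --- which the lemma asserts anyway and which the paper never explicitly verifies for $1<p\le 2$ --- as the engine of the reduction. Two caveats. First, in your jump computation the sign convention is off: the distributional Laplacian across the cut equals \emph{minus} the sum of the two outward normal derivatives; the density you wrote, $p\,\rho^{p/2-1}\sin(p\pi/2)\ge 0$, is the Laplacian itself (the sum of outward normal derivatives is its negative), so the conclusion stands but the stated criterion should be corrected. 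Second, the boundary inequality $\omega(\theta)\ge0$ is left as a sketch; you correctly identify it, after $\psi=\theta/2$, with $A\cos^{p}\psi-B\cos(p\psi)\ge1$ for $A=\sec^{p}\frac{\pi}{2p}$, $B=\tan\frac{\pi}{2p}$, and your plan of locating the single sign change of the derivative is exactly how the paper's Lemma~\ref{versi} (Verbitsky's lemma) proves it, via concavity of $\sin\psi+B\sin((1-p)\psi)$ --- but since this is the genuine analytic content of the lemma you must either carry it out or cite that lemma explicitly. The equality discussion is sound: the strong minimum principle confines equality to $|\zeta|=\tfrac12$ and then to the zeros of $\omega$.
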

The last statement follows from a similar statement stated in \cite[Remark~2.3]{verb1}.
The inequality statement of Lemma~\ref{nice} follows from the following lemma, by dividing \eqref{fsh} with $\max\{|z|^p,|w|^p\}$ and taking $r=\min\{\frac{|z|}{|w|},\frac{|w|}{|z|}\}$.
\begin{lemma}\label{posa} For $r>0$ and $t\in[-\pi,\pi]$ and $p\in(1,2)$
$$G(r,t)=-\left(1+r^2\right)^{p/2}+ \left(\frac{1+r^2+2 r \cos t}{1+\cos\frac{\pi }{p}}\right)^{p/2}-2^{p/2} r^{p/2} \cos\frac{p t}{2} \tan\frac{\pi }{2 p}\ge 0.$$
\end{lemma}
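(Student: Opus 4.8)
The plan is to treat Lemma~\ref{posa} as a genuine two--variable minimization problem for $G$ on the region $r>0$, $t\in[-\pi,\pi]$, and to locate its global minimum, which I expect to be $0$, attained exactly at the point forced by the equality case of Lemma~\ref{nice}: namely $r=1$ and $t=\frac{\pi}{p}$. First I would shrink the domain using symmetry. Since $\cos t$ and $\cos\frac{pt}{2}$ are even, $G(r,-t)=G(r,t)$, so it suffices to take $t\in[0,\pi]$. Moreover, factoring $r^2$ out of each squared modulus yields the scaling identity $G(r,t)=r^{p}\,G(1/r,t)$, and since $r^{p}>0$ this reduces the claim to $r\in(0,1]$. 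On this reduced rectangle the behaviour near $r=0$ is harmless: as $r\to0^{+}$ one has $G(r,t)\to a_p-1=\left(1+\cos\frac{\pi}{p}\right)^{-p/2}-1>0$ uniformly in $t$, because $\cos\frac{\pi}{p}<0$ for $1<p<2$.

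The heart of the matter is the interior. Writing $\zeta=re^{it}$, it is illuminating to read
\[
G=a_p\,|1+\zeta|^{p}-(1+|\zeta|^{2})^{p/2}-b_p\,\Re(\zeta^{p/2}),
\]
which makes transparent why the construction of $a_p,b_p,t_\circ$ was engineered as it was: at $t=\frac{\pi}{p}$ the oscillatory term vanishes since $\cos\frac{pt}{2}=\cos\frac{\pi}{2}=0$, while the choice $a_p=\left(1+\cos\frac{\pi}{p}\right)^{-p/2}$ makes the two power terms cancel at $(r,t)=(1,\frac{\pi}{p})$, so $G(1,\frac{\pi}{p})=0$. I would then examine the stationarity system $\partial_r G=0$, $\partial_t G=0$ and argue that inside $(0,1]\times[0,\pi]$ the only critical point admissible as a minimum is $(1,\frac{\pi}{p})$, where a Hessian (or a second--derivative check along each coordinate slice) confirms a local minimum of value $0$. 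The two remaining edges are one--variable inequalities: on $t=0$, since $1+r^2+2r=(1+r)^2$, one must show $a_p(1+r)^{p}-(1+r^{2})^{p/2}-b_p r^{p/2}\ge0$ for $r\in(0,1]$; on $t=\pi$, since $1+r^2-2r=(1-r)^2$, the analogous inequality carries the term $-b_p r^{p/2}\cos\frac{p\pi}{2}$, whose sign is \emph{positive} (as $\cos\frac{p\pi}{2}<0$) and thus helps. Both I would settle by studying the single function of $r$ and showing it stays strictly positive.

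The main obstacle is precisely the stationarity analysis: the system $\nabla G=0$ mixes the powers $(1+r^{2}+2r\cos t)^{p/2-1}$ with $\sin\frac{pt}{2}$ and $\cos\frac{pt}{2}$, so it is transcendental and its solutions cannot be enumerated in closed form. My strategy to avoid solving it globally is to work slice by slice: for each fixed $t$ I would show that $r\mapsto G(r,t)$ is strictly positive on $(0,1]$ except at $(1,\frac{\pi}{p})$, by establishing that this slice has a single interior minimum whose value is bounded below — for instance by minorizing $G$ by a function with an explicit $r$--minimum, or by verifying convexity of $r\mapsto G(r,t)$ in a suitable variable. A numerical sanity check already suggests the slice minimum migrates into $(0,1)$ as $t$ moves away from $\frac{\pi}{p}$ (e.g.\ near $r\approx0.8$ when $t=0$) yet stays strictly positive, which is consistent with the global minimum sitting on the edge $r=1$ at $t=\frac{\pi}{p}$. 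As the author notes, the final verification of these one--parameter estimates is where symbolic computation is invoked; the conceptual content is the reduction above, together with the observation that $a_p$, $b_p$ and $t_\circ=\frac{\pi}{p}$ were chosen exactly so that both the angular term and the leftover power terms vanish simultaneously at the minimizer.
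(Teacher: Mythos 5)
Your reductions are sound: the evenness in $t$, the scaling identity $G(r,t)=r^{p}G(1/r,t)$ restricting to $r\in(0,1]$, the limit $G(r,t)\to a_p-1>0$ as $r\to0^{+}$, and the identification of $(r,t)=(1,\pi/p)$ as the equality point where both the oscillatory term and the difference of power terms vanish. But the proposal stops exactly where the proof has to begin. The entire analytic content of the lemma is the slicewise positivity, and for that you offer only candidate strategies (``minorizing $G$ by a function with an explicit $r$--minimum'', ``verifying convexity in a suitable variable'') without executing any of them. Worse, your own numerical observation that the minimum of $r\mapsto G(r,t)$ migrates into the interior (near $r\approx0.8$ at $t=0$) shows that a direct slice analysis of $G$ forces you to locate and evaluate an interior critical point for every $t$ --- a transcendental problem you have already conceded you cannot solve in closed form. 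So as written the argument is circular: the hard step is deferred to an unspecified computation.

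The paper's proof contains precisely the idea that dissolves this obstacle, and it is absent from your proposal: divide by $(2r)^{p/2}$ and work with
$$P(r)= -\left(\frac{1+r^2}{2r}\right)^{p/2}+\left(\frac{1+r^2+2 r \cos t}{2r(1+ \cos\frac{\pi }{p})}\right)^{p/2}-\cos\frac{p t}{2} \tan\frac{\pi }{2 p},$$
so that $G(r,t)=(2r)^{p/2}P(r)$. The point of this normalization is that $P$ is \emph{monotone decreasing} on $(0,1]$ for every fixed $t$ (the paper proves $P'\le 0$ by reducing it to the elementary bound $2^{p/2-1}\ge(1+\cos\frac{\pi}{p})^{p/2}$), so the interior minimum you observed for $G$ disappears for $P$ and the whole problem collapses to the single edge $r=1$. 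There the inequality $P(1)\ge0$ becomes, after $s=t/2$, exactly Verbitsky's one--variable lemma $A\cos^{p}s-B\cos(ps)\ge1$ (Lemma~\ref{versi}), which the paper proves in the appendix by a concavity argument. Without either this normalization (or some equivalent device that pushes the slice minima to the boundary) and without the reduction to the known one--variable inequality, your outline cannot be completed as stated.
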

We also need the following lemma.
\begin{lemma}\label{hard}
For $p>4$ define
$$\phi(\theta)=\left\{
              \begin{array}{rr}
                -\cos \frac{p}{2}(\frac{\pi}{2}-|\theta|), & \hbox{ if $\frac{\pi}{2}-2\frac{\pi}{p}\le|\theta|\le \frac{\pi}{2}$ ;} \\
                \max\{|\cos \frac{p}{2}(\frac{\pi}{2}-\theta)|,|\cos \frac{p}{2}(\frac{\pi}{2}+\theta)|\}, & \hbox{if $|\theta|\le \frac{\pi}{2}-2\frac{\pi}{p}$  ;} \\

              \end{array}
            \right.$$  and $\vartheta(\theta)=\vartheta_p(\theta)$ as follows
$$\vartheta(\theta):=\left\{
                     \begin{array}{ll}
                       \phi(\theta), & \hbox{if $|\theta|\le \pi/2$;} \\
                       \phi(\pi-|\theta|), & \hbox{if $\pi/2\le |\theta|\le \pi$.}
                     \end{array}
                   \right.$$
For $2<p\le 4$ define \begin{equation}\label{conj}\vartheta(\theta):=- \cos\frac{p}{2}(\pi-|\theta|).\end{equation}
Extend $\vartheta$ on $[-2\pi,2\pi]$ so that:  for $\pi \le |\theta|\le 2\pi$, $\vartheta(\theta):=\vartheta(\abs{\theta}-\pi)$.

Let $p>2$ and $$a_p = (1-\cos\frac{\pi}{p})^{-p/2}$$ and $$b_p=2^{p/2} \cot\frac{\pi }{2 p}. $$
Then for complex numbers $z=re^{it}$ and $w=Re^{is}$ and for $p\ge 4$ we have \begin{equation}\label{ale}(|z|^2+|w|^2)^{p/2}\le a_p |z+\bar w|^p - b_p (r R)^{p/2}\vartheta(s+t-\pi/2).
\end{equation}
For $2\le p\le 4$
\begin{equation}\label{ale1}(|z|^2+|w|^2)^{p/2}\le a_p |z+\bar w|^p - b_p (r R)^{p/2}\vartheta(s+t).\end{equation}
The equality is attained in \eqref{ale} if $|z|=|w|$ and $\arg(wz)=\pi/2+\frac{\pi}{p}$. On the other hand equality is attained in \eqref{ale1} for  $|z|=|w|$ and $\arg(wz)=\frac{\pi}{p}$.
\end{lemma}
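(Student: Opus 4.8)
My plan is to reduce \eqref{ale} and \eqref{ale1} to a scalar inequality exactly as inequality \eqref{fsh} of Lemma~\ref{nice} was reduced to the profile inequality of Lemma~\ref{posa}. Both sides are positively homogeneous of degree $p$ in $(|z|,|w|)$, and the angular dependence enters only through $\theta=s+t=\arg(wz)$. Dividing \eqref{ale1} by $(rR)^{p/2}$ and setting $x=\frac{|z|^2+|w|^2}{|z||w|}\ge 2$, the statement becomes
\[
F_p(x,\theta):=a_p\,(x+2\cos\theta)^{p/2}-x^{p/2}-b_p\,\vartheta(\theta)\ge 0,\qquad x\ge 2,
\]
with the same reduction for \eqref{ale} after replacing $\theta$ by $\theta-\tfrac{\pi}{2}$. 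I would first record how the constants are forced: at the symmetric configuration $|z|=|w|$ (that is $x=2$), restricting to the single variable $\theta$ and imposing that the minimum of $F_p(2,\cdot)$ equals $0$ and is attained at the extremal angle (where $\vartheta$ vanishes and hence the minorant drops out) pins down $a_p=(1-\cos\frac{\pi}{p})^{-p/2}$ and $b_p=2^{p/2}\cot\frac{\pi}{2p}$ and simultaneously exhibits the equality case, so that $a_p$ is best possible. This is the $p>2$ analogue of the computation preceding Lemma~\ref{nice}.

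The heart of the proof is the global bound $F_p(x,\theta)\ge 0$ for all $x\ge 2$. For fixed $\theta$ I would minimize in $x$: since $p/2>1$ and $a_p>1$, the $x$-derivative of $a_p(x+2\cos\theta)^{p/2}-x^{p/2}$ vanishes at most once on $(0,\infty)$, and a short computation shows the unique interior critical point lies in $\{x\ge 2\}$ precisely when $\cos\theta$ drops below a negative, $p$-dependent threshold. This splits the angular range into a boundary regime, where the minimum over $x\ge 2$ sits at $x=2$ and it suffices to verify the one-variable trigonometric inequality $F_p(2,\theta)\ge 0$, and an interior regime (only for $\theta$ near $\pm\pi$), where one substitutes the critical value $x_\ast(\theta)$ and is left with a different one-variable inequality. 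The extremal angle falls in the boundary regime, so the reduction is consistent with the equality analysis.

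It remains to treat the two ranges of $p$ and the piecewise structure of $\vartheta$. For $2\le p\le 4$ (inequality \eqref{ale1}) the profile $\vartheta(\theta)=-\cos\frac{p}{2}(\pi-|\theta|)$ is a single harmonic branch with a convex corner at $\theta=0$ and periodic gluing at $\pm\pi$; here the half-angle identity $1-\cos\frac{\pi}{p}=2\sin^2\frac{\pi}{2p}$ together with $1+\cos\theta=2\cos^2\frac{\theta}{2}$ collapse the one-variable inequalities to manifestly nonnegative forms, much as in Lemma~\ref{posa}. For $p\ge 4$ (inequality \eqref{ale}) the argument $\frac{p}{2}(\frac{\pi}{2}-|\theta|)$ sweeps through more than a half-period, so the single harmonic branch oscillates and ceases to be the maximal admissible profile; the correct envelope is instead the maximum of the two relevant cosine branches glued at a convex corner, which is exactly the definition of $\phi$ on the inner arc $|\theta|\le\frac{\pi}{2}-2\frac{\pi}{p}$, with a single branch retained near $|\theta|=\frac{\pi}{2}$ and the whole reflected to $[\frac{\pi}{2},\pi]$ and glued periodically, the $\pi/2$-shift aligning the extremal configuration. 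One then verifies $F_p\ge 0$ arc by arc, with special attention at the junction $|\theta|=\frac{\pi}{2}-2\frac{\pi}{p}$ where the two branches meet.

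The main obstacle is the range $p\ge 4$. There one must keep $\vartheta$ simultaneously (i) admissible as a minorant, that is $\rho^{p/2}\vartheta(\psi)$ must be subharmonic so that $(z,w)\mapsto(|z||w|)^{p/2}\vartheta(\arg(zw))$ is plurisubharmonic, which is what makes the lemma usable in Theorem~\ref{kalaj} and which forces every corner of $\vartheta$ to bend the correct way and the maximum to be taken exactly where two harmonic branches cross, and (ii) small enough that the pointwise bound $F_p\ge 0$ still survives. Reconciling these competing demands at the branch junctions, together with controlling the interior-minimum substitution $x_\ast(\theta)$ on each arc, is where the genuine technical work lies and where, as the author notes, machine assistance is natural; by contrast the range $2\le p\le 4$ is essentially routine once the homogeneity reduction is in place.
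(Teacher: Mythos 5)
Your reduction is exactly the one the paper uses: after normalizing by $(rR)^{p/2}$ (equivalently $R=1$ and $a=(1+r^2)/(2r)$, so your $x$ is $2a$), one minimizes the radial part and observes that the interior critical point enters the admissible range only when $\cos\theta$ drops below the negative threshold $-1+\bigl(1-\cos\frac{\pi}{p}\bigr)^{p/(p-2)}$; this matches the paper's split into the regime $G(r)\ge G(1)$ and the stationary-point regime. So the architecture is right. The genuine gap is that everything after this reduction is asserted rather than proved, and that is where essentially all of the content of the lemma lives. In the boundary regime you must establish $F_p(2,\theta)\ge 0$, which after the angle substitution is the inequality $\chi(y)=\bigl(\frac{1-\cos y}{1-\cos\frac{\pi}{p}}\bigr)^{-p/2}\bigl(1-\cos\frac{py}{2}\cot\frac{\pi}{2p}\bigr)\le 1$; the paper proves this by a careful sign analysis of $\zeta(t)=\cos t-\cos((p-1)t)\cot\frac{\pi}{2p}$. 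In the interior regime you must substitute $x_\ast(\theta)$ and prove \eqref{same}, which the paper reduces to \eqref{hardy}; the proof of \eqref{hardy} alone takes roughly a page (a second-derivative analysis of $\beta(y)$ supported by the auxiliary Lemma~\ref{conti}) and the author explicitly flags it as long. None of this is supplied by your half-angle identities, and deferring it to ``machine assistance'' leaves the lemma unproved.

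Two of your structural claims are also misleading. First, the range $2\le p\le 4$ is not ``essentially routine'': its boundary regime reduces (via $y=\pi-x$) to the \emph{same} inequality $\chi(y)\le 1$ as the $p>4$ case, and its stationary-point regime requires showing $t\cos^{-1}(ts)\ge\cos^{-1}s$ on the relevant parameter set, which the paper handles by separate concavity arguments on two sub-ranges of $p$; nothing ``collapses to manifestly nonnegative forms.'' Second, for $p\ge 4$ the inner arc $\abs{\theta}\le\frac{\pi}{2}-\frac{2\pi}{p}$, where $\vartheta$ is a maximum of two cosine branches, is not a junction issue to be checked numerically: the paper disposes of it by an explicit comparison, replacing the angle by an auxiliary $y'$ with $\cos y\le\cos y'$ so that the already-proved bound $\chi(y')\le 1$ dominates both branches $\phi_1,\phi_2$. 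You have correctly identified the plan of the paper's proof, including the role of subharmonicity of the minorant (which is actually delegated to Lemmas~\ref{lemsub} and~\ref{lemsub2}, not to this lemma), but the load-bearing one-variable trigonometric estimates are missing.
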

We postpone the proofs of Lemma~\ref{nice} and Lemma~\ref{hard} and prove the fact that $\vartheta_p$ induces a subharmonic (and a pluri-subharmonic) function for every $p>1$.
\begin{lemma}\label{lemsub}
For $p\ge 2$, $z=|z|e^{i\theta}$ the function $$\Phi_p(z)= -|z|^{p/2}\cos\frac{p}{2}(\pi-|\theta|),
$$ is subharmonic in $\mathbf{C}$.
\end{lemma}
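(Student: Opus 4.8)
The plan is to show that $\Phi_p$ is continuous on $\mathbf{C}$ and harmonic off a single ray, then to locate the distributional Laplacian on that ray and check that it is a \emph{nonnegative} measure. Write $z=\rho e^{i\theta}$ with $\theta\in(-\pi,\pi]$ and set $g(\theta)=\cos\frac{p}{2}(\pi-\abs{\theta})$, so that $\Phi_p=-\rho^{p/2}g(\theta)$. First I would record the two structural facts that make this tractable. On each open half-plane $\Phi_p$ is, up to sign, the real part of a branch of $e^{ip\pi/2}\bar z^{p/2}$ (for $\Im z>0$) or $e^{ip\pi/2}z^{p/2}$ (for $\Im z<0$), hence harmonic there; equivalently the polar Laplacian gives $\Delta\Phi_p=-\frac{p^2}{4}\rho^{p/2-2}g(\theta)-\rho^{p/2-2}g''(\theta)=0$ wherever $g$ is smooth, since $g''=-\frac{p^2}{4}g$. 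Moreover $\Phi_p(\bar z)=\Phi_p(z)$, so $\Phi_p$ is symmetric about the real axis.

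Next I would check continuity and the gluing along the real axis. At the origin $\Phi_p$ vanishes and $\abs{\Phi_p}\le\rho^{p/2}$, so it is continuous there. Across the \emph{negative} real axis the two branches meet: $g(\pi)=g(-\pi)=1$ and $g'(\pi^-)=g'(-\pi^+)=0$, so $\Phi_p$ is $C^1$ (indeed $C^2$) there and the two harmonic pieces glue into a harmonic function, contributing nothing singular. Thus the only candidate singular locus is the \emph{positive} real axis $\{\theta=0\}$, where $g$ has a corner inherited from $\abs{\theta}$.

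The heart of the argument is the sign of the jump across $\{\theta=0\}$. By Green's identity, a function continuous on $\mathbf{C}$ and harmonic on either side of a line has distributional Laplacian equal to the jump $\bigl(\partial_y\Phi_p|_{0^+}-\partial_y\Phi_p|_{0^-}\bigr)$ of its normal derivative, times arclength on the line. Using the reflection symmetry this jump equals $2\,\partial_y\Phi_p|_{0^+}$, and since $\partial_y=\frac1\rho\partial_\theta$ on $\{\theta=0\}$ one computes
$$\partial_y\Phi_p\big|_{\theta=0^+}=-\tfrac{p}{2}\rho^{p/2-1}\sin\tfrac{p\pi}{2},$$
so the density of $\Delta\Phi_p$ on the positive real axis is $-p\,\rho^{p/2-1}\sin\frac{p\pi}{2}$. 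This is nonnegative exactly when $\sin\frac{p\pi}{2}\le 0$, i.e. for $2\le p\le 4$, the range in which $\Phi_p$ is the building block $\rho^{p/2}\vartheta_p$ of \eqref{conj}. Hence $\Delta\Phi_p\ge 0$ as a measure and $\Phi_p$ is subharmonic.

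I expect the main obstacle to be making this distributional computation fully rigorous: confirming that there are no further singular contributions beyond the real axis (in particular at the origin, the endpoint of the ray, where $\Phi_p$ is only $C^1$) and justifying the normal-derivative jump at the level of the sub-mean value property rather than a formal Green's identity. A clean way to discharge this is to verify the sub-mean value inequality directly: the corner makes the circular average over a small disk centered on the positive real axis exceed the center value under the same constraint, while the average over a circle centered at the origin equals $-\frac{2}{p\pi}\rho^{p/2}\sin\frac{p\pi}{2}\ge\Phi_p(0)=0$, which again forces $\sin\frac{p\pi}{2}\le 0$. That the ray check and the origin check produce the identical condition is exactly what I would use to confirm the computation and close the proof.
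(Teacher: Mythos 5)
Your argument is correct on the range where the lemma actually holds, and it reaches the paper's conclusion by a genuinely different local mechanism. The paper uses the same geometric decomposition (harmonic off the positive real axis, a separate check at the origin), but on the ray it avoids all distribution theory by observing that for $\sin\frac{p\pi}{2}\le 0$ one has, near any point of the positive real axis, $\Phi_p(z)=\max\{F_+(z),F_-(z)\}$ with $F_\pm(z)=-|z|^{p/2}\cos\frac{p}{2}(\pi\mp\theta)$ harmonic, so subharmonicity there is immediate; the origin is handled exactly as you do, by the circular mean (the paper's displayed value $-\tfrac{4r^{p/2}}{p}\sin\tfrac{p\pi}{2}$ is off from your correct $-\tfrac{2}{p\pi}r^{p/2}\sin\tfrac{p\pi}{2}$ by a harmless positive factor). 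Your normal-derivative jump $-p\rho^{p/2-1}\sin\frac{p\pi}{2}$ is the same condition in analytic disguise: the max representation is valid precisely when that jump is nonnegative, so the two arguments are equivalent in content, but the max-of-harmonics route is more elementary and automatically disposes of the endpoint-of-the-ray worry you rightly raise (if you keep the distributional route, the cleanest fix is exactly the one you sketch: the singular density $\rho^{p/2-1}$ is locally integrable at $0$ for $p\ge2$, and the circular average rules out a negative point mass there). One point worth making explicit: your computation shows the lemma as literally stated ("for $p\ge2$") is false for, say, $p=5$, where the circular average at the origin is negative; the paper's own proof is likewise confined to $2\le p\le4$ and defers $p>4$ to the differently defined function of Lemma~\ref{lemsub2}, so your restriction to the range of \eqref{conj} is not a gap in your argument but a correct reading of what can be proved.
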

\begin{proof}[Proof of Lemma~\ref{lemsub}]
Let $2\le p\le 4$. $z_0=re^{i\theta}\in \mathbf{C}\setminus\{0\}$. If $\theta=0$, then near $z_0$, $\Phi_p(z)=\max\{-|z|^{p/2}\cos\frac{p}{2}(\pi-\theta), -|z|^{p/2}\cos\frac{p}{2}(\pi+\theta)\}$. Since $F_+(z)=-|z|^{p/2}\cos\frac{p}{2}(\pi-\theta)$ and $F_{-}=-|z|^{p/2}\cos\frac{p}{2}(\pi+\theta)$ are localy harmonic, it follows that $\Phi_p$ is subharmonic near $z_0$. If $\theta\neq 0$, then $\Phi_p$ coincides with $F_+$ or $F_{-}$ near $z_0$. Finally, since $$\frac{1}{2\pi r}\int_{|z|=r}\Phi_p(z) |dz|=-\frac{4 r^{p/2} \sin  \left[\frac{p \pi }{2}\right]}{ p}\ge 0=\Phi_p(0),$$ we obtain that $\Phi_p$ is subharmonic in $\mathbf{C}$. The case $p>4$ follows from
\end{proof}

\begin{lemma}\label{lemsub2}\cite[Lemma~3]{verb1}
For $p\ge 4$, $z=|z|e^{i\theta}$ the function $$\Phi_p(z)=|z|^{p/2}\vartheta_p(\theta-\pi/2)$$ is subharmonic in $\mathbf{C}$.
\end{lemma}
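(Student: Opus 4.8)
The plan is to follow the template of the proof of Lemma~\ref{lemsub}, exploiting the separated--variables form of $\Phi_p$. I would write $\Phi_p(re^{i\theta})=r^{p/2}g(\theta)$, where $g(\theta)=\vartheta_p(\theta-\pi/2)$, and record that each elementary branch entering the definition of $\vartheta_p$ has the shape $\cos(\tfrac{p}{2}\theta+\beta)$, so that each branch satisfies the ordinary differential equation $g''+\tfrac{p^2}{4}g=0$. This is precisely the condition for $r^{p/2}\cos(\tfrac{p}{2}\theta+\beta)$ to be harmonic off the origin. The absolute values $|\cos(\cdots)|$ and the maximum appearing in $\phi$ are maxima of two such branches, hence produce functions that are subharmonic on the sectors where they are active, since a maximum of harmonic functions is subharmonic.

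Away from the origin I would compute the distributional Laplacian in polar coordinates. For a continuous function $r^{p/2}g(\theta)$ whose angular profile $g$ is piecewise a solution of $g''+\tfrac{p^2}{4}g=0$, one has $\Delta(r^{p/2}g)=r^{p/2-2}\bigl(g''+\tfrac{p^2}{4}g\bigr)$, which vanishes on the smooth pieces and reduces to a locally finite sum of angular point masses located at the corners $\theta_j$ of $g$, each carrying weight proportional to the jump $g'(\theta_j^+)-g'(\theta_j^-)$. Thus subharmonicity on $\mathbf{C}\setminus\{0\}$ is equivalent to the nonnegativity of every such angular jump. At the corners coming from an honest maximum (including those created by the absolute values inside $\phi$, and the corner at $\theta=\pi/2$ where the two cosines of the maximum exchange dominance) the jump is nonnegative automatically, because a maximum always opens upward. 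The corners requiring an explicit check are the transition rays $\theta=\pi/2\pm(\tfrac{\pi}{2}-2\tfrac{\pi}{p})$, where the maximum branch is glued to the single branch $-\cos\tfrac{p}{2}(\tfrac{\pi}{2}-|\cdot|)$, and the rays $\theta=0,\pi$, where the reflected piece $\phi(\pi-|\cdot|)$ is glued to $\phi(\cdot)$. For each of these I would evaluate the one-sided derivatives $g'(\theta_j^\pm)$ from the explicit cosine formulas and verify that their difference is $\ge 0$ for all $p\ge 4$; the evenness of $\vartheta_p$ and the reflection reduce these to essentially two inequalities.

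The origin is then handled exactly as in Lemma~\ref{lemsub}: since $\Phi_p(0)=0$, it suffices to establish the sub--mean value inequality $\frac{1}{2\pi r}\int_{|z|=r}\Phi_p\,|dz|=r^{p/2}\,\frac{1}{2\pi}\int_{-\pi}^{\pi}g(\theta)\,d\theta\ge 0$, which amounts to showing $\int_{-\pi}^{\pi}\vartheta_p(w)\,dw\ge 0$. I would evaluate this integral by splitting into the sectors dictated by the definitions of $\phi$ and $\vartheta_p$ and summing the resulting elementary cosine integrals. Combining local subharmonicity on $\mathbf{C}\setminus\{0\}$ with this inequality at the origin then gives subharmonicity on all of $\mathbf{C}$.

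The main obstacle I anticipate is the gluing step: for $p>4$ there are several transition rays, the branch realizing the maximum changes as $\theta$ varies, and one must correctly pair the two branches meeting at each ray before checking the sign of the angular--derivative jump uniformly in $p$. The origin integral, although elementary, is likewise genuinely piecewise, so the cancellations among the sectors must be tracked carefully to confirm that the total is nonnegative.
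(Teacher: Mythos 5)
Your proposal is correct and follows essentially the same route as the paper: off the origin, $\Phi_p$ is piecewise of the form $r^{p/2}\cos(\tfrac{p}{2}\theta+\beta)$, local subharmonicity is checked on the transition rays, and the origin is handled by the sub-mean value inequality $\int_{-\pi}^{\pi}\vartheta_p(\theta)\,d\theta\ge 0$. The only real difference is that the paper short-circuits the ``gluing'' step you single out as the main obstacle: every non-smooth ray is in fact a point where $\Phi_p$ is locally a maximum of harmonic branches (at $|\theta|=\tfrac{\pi}{2}-\tfrac{2\pi}{p}$ the branch $-\cos\tfrac{p}{2}(\tfrac{\pi}{2}-\theta)$ is near its peak value $1$ and dominates the competitor on both sides, so there is no corner there at all, and at $\theta=0,\pi$ one has an even maximum), so no one-sided derivative computations are needed; likewise the origin integral is evaluated not sector by sector but via the symmetry $\vartheta_p(\tfrac{\pi}{2}-x)=\vartheta_p(\tfrac{\pi}{2}+x)$ and the vanishing of $\int_{\pi/2-2\pi/p}^{\pi/2+2\pi/p}\vartheta_p$, which reduce it to $2\int_0^{\pi/2-2\pi/p}\vartheta_p$ with a manifestly nonnegative integrand.
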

\begin{proof}[Proof of Lemma~\ref{lemsub2}]
The function $\Phi_p(z)$ coincides with $\Phi_{p/2}(z/i)$ from the paper \cite{verb1}. For the completeness include its proof.
Let $z_0=re^{i\theta}\in \mathbf{C}\setminus\{0\}$. If $\theta\neq 0$,  then near $z_0$,  $\Phi_p$ coincides with a harmonic function, and so is subharmonic in $z_0$. If $\theta=0$, then $\Phi_p$ is equal to the maximum of several harmonic functions of the form $u(re^{i\theta}) = r^{p/2}\cos(p/2(\theta_0 +\theta))$.
Finally, since
$\vartheta_p(\pi/p-x)=-\vartheta(\pi/p+x)$ for $x\in[\pi/2-2\pi/p,\pi/2]$ and $\vartheta_p(\pi/2-x)=\vartheta(\pi/2+x)$, we obtain $$ \int_{\pi/2-2\pi/p}^{\pi/2+2\pi/p}\vartheta_p(x)dx=0.$$ Thus
$$\frac{1}{2\pi r}\int_{|z|=r}F(z) |dz|=\frac{2r^{p/2}}{2\pi r}\int_{0}^{\pi/2-2\pi/p}\vartheta_p(x) dx> 0=F(0),$$ we obtain that $F$ is subharmonic in $\mathbf{C}$.
\end{proof}

\begin{proof}[Proof of Theorem~\ref{kalaj}]
In view of \eqref{rez}, Lemma~\ref{lemsub} and Lemma~\ref{lemsub2}, for  $z=|z|e^{it},w=|w|e^{is}\in \mathbf{C}$ define the function   $$\mathcal{F}_p(z,w)=\left\{
                                               \begin{array}{ll}
                                                 \Re((zw)^{p/2}), & \hbox{if $1<p\le 2$;} \\
                                                 \Phi_p(zw), & \hbox{if $p> 2$.}
                                               \end{array}
                                             \right.
$$ Then $\mathcal{F}_p$ is plurisubharmonic for every $p>1$. Let $1<p\le 2$ and assume that $f=g+\bar h$, where $g$ and $h$ are holomorphic function on the unit disk. Then from Lemma~\ref{nice} and Lemma~\ref{hard}, we have $$(|g(z)|^2+|h(z)|^2)^{p/2}\le a_p |g(z)+\overline{h(z)}|^p - b_p \mathcal{F}_p(g(z),h(z)),$$ where
$$a_p=\left(1-\abs{\cos\frac{\pi }{p}}\right)^{-p/2}$$ and  $$b_p=2^{p/2} \tan\frac{\pi }{2 p}.$$
Then $$\int_{\mathbf{T}} (|g(z)|^2+|h(z)|^2)^{p/2}\le a_p \int_{\mathbf{T}} |f(z)|^p - b_p \int_{\mathbf{T}}\mathcal{F}_p(g(z),h(z)).$$
Let $\theta=\arg(g(0)h(0))$. As $\mathcal{F}_p(g(z),h(z))$ is subharmonic, by sub-mean inequality we have that $$\int_{\mathbf{T}}\mathcal{F}_p(g(z),h(z))\ge \mathcal{F}_p(g(0),h(0))=|g(0)h(0)|^p\cos\left[p\frac{|\theta|}{2}\right]\ge 0,$$ if $\theta\in[-\pi/2,\pi/2]$.
If $2<p\le 4$, then we also have that $$\int_{\mathbf{T}} (|g(z)|^2+|h(z)|^2)^{p/2}\le a_p \int_{\mathbf{T}} |f(z)|^p - b_p \int_{\mathbf{T}}\mathcal{F}_p(g(z),h(z)),$$ in conjunction with the inequality $$ \int_{\mathbf{T}}\mathcal{F}_p(g(z),h(z))\ge \mathcal{F}_p(g(0),h(0))=-|g(0)h(0)|^p\cos\left[p\frac{\pi-|\theta|}{2}\right]=:R.$$
Further
$$R\ge \left\{
                                                         \begin{array}{ll}
                                                            0, & \hbox{if $p\le 3$ and $\Re(g(0)h(0))\le 0$, i.e. $\theta\in[-\pi/2,\pi/2]$;} \\
                                                            0, & \hbox{if $3\le p\le 4$ and $\Re(g(0)h(0))=0$, i.e. if $\theta=\pm \pi/2$.}
                                                         \end{array}
                                                       \right.
$$

If $p\ge 4$, then  $$\int_{\mathbf{T}} (|g(z)|^2+|h(z)|^2)^{p/2}\le a_p \int_{\mathbf{T}} |g(z)+\overline{ih(z)}|^p - b_p \int_{\mathbf{T}}\mathcal{F}_p(g(z),h(z)). $$
Further we have $$\int_{\mathbf{T}}\mathcal{F}_p(g(z),h(z))\ge \mathcal{F}(g(0),h(0))=|g(0)h(0)|^{p/2}|\cos \frac{p\pi}{4}|\ge 0,$$ if $\Re(g(0)h(0))=0$, i.e. if $\theta=\pm \pi/2$. This finishes the proof.
\end{proof}

\begin{proof}[Proof of Lemma~\ref{posa}]
Let $S=[-\pi,\pi]\times [0,1]$. We prove that $$\min_{(t,r)\in S} G(t,r)=G\left(\frac{\pi}{p},1\right)=0.$$

We need the following claim.
Let $t\in[-\pi,\pi]$ and let $p\in(1,2)$. Let  $$P(r)= -\left(\frac{1+r^2}{2r}\right)^{p/2}+\left(\frac{1+r^2+2 r \cos t}{2r(1+ \cos\frac{\pi }{p})}\right)^{p/2}-\cos\frac{p t}{2} \tan\frac{\pi }{2 p}.$$ Then $p$ is decreasing for $r\in[0,1]$.
We have that $$P'(r)=Q\left( \left(\frac{1+r^2}{1+r^2+2 r \cos t }\right)^{1-p/2}-\left(1+\cos\frac{\pi }{p}\right)^{\frac{p}{2}}\right),$$ where
$$Q=\left(1+r^2\right)^{-1+\frac{p}{2}}2^{-1-\frac{p}{2}} p r^{-1-\frac{p}{2}} \left(-1+r^2\right)\left(1+\cos\frac{\pi }{p}\right)^{-p/2}.$$
So we need to show that $$\left(\frac{1+r^2}{1+r^2+2 r \cos t }\right)^{1-p/2}\ge \left(1+\cos\frac{\pi }{p}\right)^{\frac{p}{2}}.$$
Since
$$\frac{d}{dr}\frac{1+r^2}{1+r^2+2 r \cos t}=\frac{2 \left(-1+r^2\right) \cos t}{\left(1+r^2+2 r \cos t\right)^2}$$ we see that for
$\cos t\ge 0$, we have that \[\begin{split}\left(\frac{1+r^2}{1+r^2+2 r \cos t }\right)^{1-p/2}&\ge \left(\frac{2}{1+1+2  \cos t }\right)^{1-p/2}\\&\ge 2^{p/2-1}\ge\left(1+\cos\frac{\pi }{p}\right)^{\frac{p}{2}}. \end{split}\]
In order to prove the last inequality consider the function $$\psi(p)=\frac{2^{-1+\frac{p}{2}}}{\left(1+\cos\frac{\pi }{p}\right)^{p/2}}$$ and show that $\psi(p)\ge 1$. Then
$$\psi'(p)=\frac{ \sec ^p\frac{\pi }{2 p}\left(2 p \log \left[\sec \frac{\pi }{2 p}\right]-\pi  \tan \frac{\pi }{2 p}\right)}{4 p}.$$
Further $$\left(2p \log \left[\sec \frac{\pi }{2 p}\right]-\pi  \tan \frac{\pi }{2 p}\right)\le \left(4 \log \left[\sec \frac{\pi }{2 p}\right]-\pi  \tan \frac{\pi }{2 p}\right).$$ Let $y=\cos \frac{\pi }{2 p}$. Then
$$4\log \left[\sec \frac{\pi }{2 p}\right]-\pi  \tan \frac{\pi }{2 p}=\phi(y):=4\log \frac{1}{y}-\pi  \frac{\sqrt{1-y^2}}{y}.$$
Next $$\phi'(y)=\frac{-4 y+\frac{\pi }{\sqrt{1-y^2}}}{y^2}.$$
Now $-4 y+\frac{\pi }{\sqrt{1-y^2}}\ge 0$ if and only if $$\omega(y):=(1 - y^2) 16 y^2 - \pi^2\le 0.$$
As $$\omega'(y)=32 y-64 y^3=32y(y-1/\sqrt{2})(y+1/\sqrt{2}),$$ we obtain that $\omega(y)\le \omega(1/\sqrt{2})=4-\pi^2<0$. Hence $\phi'(y)\ge 0$. So $$\psi'(p)\le\sec ^p\left[\frac{\pi }{2 p}\right] \frac{ \phi(y)}{4 p}\le \sec ^p\left[\frac{\pi }{2 p}\right]\frac{ \phi(1)}{4 p}=0 .$$
This implies that $\psi(p)\ge \psi(2)=1$.

If $\cos t\le 0$, then $$\left(\frac{1+r^2}{1+r^2+2 r \cos t }\right)^{1-p/2}\ge \left(\frac{2}{1+1+2  \cos t }\right)^{1-p/2}\ge 1\ge\left(1+\cos\frac{\pi }{p}\right)^{\frac{p}{2}} .$$
Thus we proved that $P$ is decreasing.

Since $$G(r,t)=\frac{P(r)}{(2r)^{p/2}},$$ we have that $$G(r,t)\ge \frac{P(1)}{(2r)^{p/2}}.$$ So it remains to prove that $P(1)\ge 0$. This means that
we need to prove the inequalities
\begin{equation}\label{first}-1+\left(\frac{1+\cos t}{1+\cos\frac{\pi }{p}}\right)^{p/2}-\cos\left[\frac{p t}{2}\right] \tan \frac{\pi }{2 p}\ge 0, |t|\le \pi\end{equation}

\begin{equation}\label{second}-1+\left(\frac{1+\cos t}{1+\cos\frac{\pi }{p}}\right)^{p/2}-\cos\left[\frac{p t}{2}-p\pi\right] \tan \frac{\pi }{2 p}\ge 0, \pi\le t\le  2\pi\end{equation}

\begin{equation}\label{third}-1+\left(\frac{1+\cos t}{1+\cos\frac{\pi }{p}}\right)^{p/2}-\cos\left[\frac{p t}{2}+p\pi\right] \tan \frac{\pi }{2 p}\ge 0,-2\pi\le  t\le -\pi.\end{equation}

Prove \eqref{first}. Then \eqref{second} and \eqref{third} follows from \eqref{first}, by changing the variables $t=2\pi +t'$ or $t=-2\pi+t'$.
By taking the substitution $s=t/2$, \eqref{first}  reduces to the inequality
$$-1+\left(\frac{\cos s}{\cos\left[\frac{\pi }{2p}\right]}\right)^{p}-\cos\left[ps\right] \tan \frac{\pi }{2 p}\ge 0$$
for $s\in(0,\pi/2).$ But this is the same as \cite[Lemma~1]{ver}, which proof is including here (see below Lemma~\ref{versi}), because it is missing in \cite{ver}, and seems to the author that is not trivial.

\end{proof}

\begin{proof}[Proof of Lemma~\ref{hard}]
Consider two cases

\textbf{The case $p\ge 4$.}
Prove that \begin{equation}\label{ale23}(|z|^2+|w|^2)^{p/2}\le a_p |z-i\bar w|^p - b_p (r R)^{p/2}\vartheta(s+t),
\end{equation}
Without loss of generality assume that $R=1$, $s=0$  and $r<1$.
Prove that \[\begin{split}H&=-\left(1+r^2\right)^{p/2}+(2r)^{p/2}  \cot\frac{\pi }{2 p}\left(\cos \left[\frac{p}{2}  \left(\frac{\pi }{2}-x\right)\right]\right)\\&+
 \left(\frac{1+r^2-2 r \sin x}{1-\cos \frac{\pi }{p}}\right)^{p/2}\ge 0\end{split}\] on the interval $\frac{\pi }{2}-\frac{2 \pi }{p}\leq x\leq \frac{\pi }{2}$.

Let $$G(r)=-\left(\frac{1+r^2}{2r}\right)^{p/2}+\left(1-\cos \frac{\pi }{p}\right)^{-p/2} \left(\frac{1+r^2-2 r \sin x}{2r}\right)^{p/2}.$$ Then $H\ge 0$ if and only if $$G(r)\ge \cot\frac{\pi }{2 p}\left(\cos \left[\frac{p}{2}  \left(\frac{\pi }{2}-x\right)\right]\right).$$
Let $$a=\frac{1+r^2}{2r}.$$ Then $$H(a)=G(r)=-a^{p/2}+\left(1-\cos \frac{\pi }{p}\right)^{-p/2} \left(a- \sin [x]\right)^{p/2}.$$
We have $$H'(a)=-\frac{1}{2}pa^{-1+\frac{p}{2}}+\frac{1}{2} p \left(1-\cos \frac{\pi }{p}\right)^{-p/2} (a-\sin x)^{-1+\frac{p}{2}}.$$
Then $H'(a)< 0$ if \begin{equation}\label{xl}x< \sin^{-1}\left[1-\left(1-\cos \frac{\pi }{p}\right)^{\frac{p}{p-2}}\right]\end{equation} and thus $G(r)\ge G(1)$. Show that $$G(1)\ge -\cot\frac{\pi }{2 p}\left(\cos \left[\frac{p}{2}  \left(\frac{\pi }{2}-x\right)\right]\right).$$
 We have to show that $$-1+2^{-p/2} \left(1-\cos \frac{\pi }{p}\right)^{-p/2} (2-2 \sin [x])^{p/2}\ge -\cot\frac{\pi }{2 p}\cos \left[\frac{p}{2}  \left(\frac{\pi }{2}-x\right)\right].$$
Let $y=\pi/2-x$, then the previous inequality can be written as

$$-1+ \left(\frac{1- \cos y}{1-\cos \frac{\pi }{p}}\right)^{p/2}\ge -\cot\frac{\pi }{2 p}\cos \left[\frac{p}{2} y\right].$$

Here $0\le y\le 2\pi/p$.

Let

$$\chi(y):=\left(\frac{1-\cos y}{1-\cos \frac{\pi }{p}}\right)^{-p/2} \left(1-\cos \left[\frac{p y}{2}\right] \cot \frac{\pi }{2 p}\right)$$ and prove that $\chi(y)\le 1$.

We have that
\[\begin{split}
\chi'(y)&=\frac{p  \left(\left(-1+\cos \left[\frac{p y}{2}\right] \cot \frac{\pi }{2 p}\right) \sin y+(1-\cos y) \cot \frac{\pi }{2 p} \sin \left[\frac{p y}{2}\right]\right)}{2 \left(\frac{1-\cos y}{1-\cos \frac{\pi }{p}}\right)^{p/2} (1-\cos y)}
\\& = -\frac{p \left(\frac{1-\cos y}{1-\cos \frac{\pi }{p}}\right)^{-p/2} (\sin y-2 \cos \left[\frac{1}{2} (p-1) y\right] \sin \left[\frac{y}{2}\right] \cot \frac{\pi }{2 p})}{2 (1-\cos y)}
\\&
 = -\frac{p \left(\frac{1-\cos y}{1-\cos \frac{\pi }{p}}\right)^{-p/2} (\cos \frac{y}{2}- \cos \left[\frac{p-1}{2}  y\right] \cot \frac{\pi }{2 p})}{2 \sin \frac{y}{2}}.\end{split}\]
So we should prove that the function $$\zeta(t)=(\cos t- \cos \left[{(p-1)}  t\right] \cot \frac{\pi }{2 p})$$ is negative  for $t\in[0,\pi/(2p)]$ and positive for $t\in[\pi/(2p),\pi/p]$.

We have $$\zeta'(t)=\sin t+(1-p) \cot \frac{\pi }{2 p} \sin [(p-1) t]\le \sin(t)-\sin((p-1)t),$$ because $$(1-p) \cot \frac{\pi }{2 p}<-1.$$
Now for $0<(p-1)t<\pi/2$, it is clear that $-\sin(t)+\sin((p-1)t)> 0$. If $(p-1)t>\pi/2$ then $0<\pi-(p-1)t<\pi/2$ and so
$-\sin(t)+\sin(\pi-(p-1)t)> 0$ because $\pi-(p-1)t\ge t$ i.e. $t\le \pi/p$. Thus $\zeta$ has only one stationary point which is equal to $\pi/(2p)$. This implies that
$\chi'(y)\ge 0$ if $y\le \pi/p$ and $\chi'(y)\le 0$ if $\frac{\pi}{p}\le y\le 2\frac{\pi}{p}$. So $\max\chi(y)=\chi(\pi/p)=1$ and the proof is finished for the case $0\le y\le 2\pi/p$. So the case $$\frac{\pi }{2}-\frac{2 \pi }{p}\leq x\leq  \sin^{-1}\left[1-\left(1-\cos \frac{\pi }{p}\right)^{\frac{p}{p-2}}\right]$$ for $p>4$  has been completed.

If \begin{equation}\label{pix} \sin^{-1}\left[1-\left(1-\cos \frac{\pi }{p}\right)^{\frac{p}{p-2}}\right]<x\le\frac{\pi}{2},\end{equation} then $H'(a_p)=0$ for $$ a_p=\frac{\sin x}{1-\left(1-\cos \frac{\pi }{p}\right)^{\frac{p}{p-2}}}, $$ and $$H(a_p)=\min_{a\ge 1} H(a).$$
We should show that $$H(a_p)\ge -\cot\frac{\pi }{2 p}\cos \left[\frac{p}{2}  \left(\frac{\pi }{2}-x\right)\right],$$
i.e. if
$$x> \sin^{-1}\left[1-\left(1-\cos \frac{\pi }{p}\right)^{\frac{p}{p-2}}\right]$$ then
\[\begin{split}-\left({\left(1-\cos \frac{\pi }{p}\right)^{\frac{p}{2-p}}}-1\right)^{1-\frac{p}{2}} & \left(1-\cos \frac{\pi }{p}\right)^{-p/2}\sin^{p/2} x\\&\ge -\cot\frac{\pi }{2 p}\cos \left[\frac{p}{2}  \left(\frac{\pi }{2}-x\right)\right]\end{split}\]
or
\[\begin{split}\left(-1+\left(1-\cos \frac{\pi }{p}\right)^{\frac{p}{2-p}}\right)^{1-\frac{p}{2}} &\left(1-\cos \frac{\pi }{p}\right)^{-p/2}\sin^{p/2} x\\&\le \cot\frac{\pi }{2 p}\cos \left[\frac{p}{2}  \left(\frac{\pi }{2}-x\right)\right].\end{split}\]
Now $$W(y)=:\frac{\cos \left[\frac{p y}{2}\right] \cot \frac{\pi }{2 p}}{\cos^{p/2} y}$$ is monotone decreasing for
\begin{equation}\label{inw} 0\le y\le \pi/2-\sin^{-1}\left[1-\left(1-\cos \frac{\pi }{p}\right)^{\frac{p}{p-2}}\right].\end{equation}
Indeed  $$W'(y)=\frac{1}{2} p \cos^{-1-\frac{p}{2}} y \cot \frac{\pi }{2 p} \sin\left[y-\frac{p y}{2}\right].$$
Further in view of \eqref{inw}  we have that \begin{equation}\label{web}-\frac{\pi}{2} \le \frac{1}{2} (2-p) \cos^{-1}\left[1-\left(1-\cos\left[\frac{\pi }{p}\right]\right)^{\frac{p}{p-2}}\right] \le y-\frac{py}{2}\le 0.\end{equation}
Namely $$-\frac{\pi}{2} \le \frac{1}{2} (2-p) \cos^{-1}\left[1-\left(1-\cos\left[\frac{\pi }{p}\right]\right)^{\frac{p}{p-2}}\right]$$ if and only if
$$\cos\left[\frac{\pi }{p-2}\right]<1-\left(1-\cos\left[\frac{\pi }{p}\right]\right)^{\frac{p}{p-2}},\ \ p\ge 4.$$ The last inequality is trivial because $$\cos\left[\frac{\pi }{p-2}\right]\le \cos\left[\frac{\pi }{p}\right], \ \ \ \ p\ge 4.$$

Thus
$$W(y)\ge W\left(\pi/2-\sin^{-1}\left[1-\left(1-\cos \frac{\pi }{p}\right)^{\frac{p}{p-2}}\right]\right). $$
Thus we have to show that
\[\begin{split}&\left(-1+\left(1-\cos \frac{\pi }{p}\right)^{\frac{p}{2-p}}\right)^{1-\frac{p}{2}} \left(1-\cos \frac{\pi }{p}\right)^{-p/2}\\&\le
\frac{\cos \left[\frac{1}{2} p \cos^{-1} \left[1-\left(1-\cos \frac{\pi }{p}\right)^{\frac{p}{p-2}}\right]\right] \cot \frac{\pi }{2 p}}{\left(1-\left(1-\cos \frac{\pi }{p}\right)^{\frac{p}{p-2}}\right)^{p/2}}\end{split}\]

or what is the same
\begin{equation}\label{same}1-\left(1-\cos \frac{\pi }{p}\right)^{\frac{p}{p-2}}\le \cos \left[\frac{1}{2} p \cos^{-1} \left[1-\left(1-\cos \frac{\pi }{p}\right)^{\frac{p}{p-2}}\right]\right] \cot \frac{\pi }{2 p}.\end{equation}

Prove instead that
\begin{equation}\label{prima}1-\left(1-\cos \frac{\pi }{p}\right)^{\frac{p}{p-2}}\le \cos \frac{\pi }{2 p}\end{equation} and

\begin{equation}\label{seconda1}\cos \left[\frac{1}{2} p \cos^{-1} \left[1-\left(1-\cos \frac{\pi }{p}\right)^{\frac{p}{p-2}}\right]\right] \ge \sin\frac{\pi }{2 p}.\end{equation}
Let $x=1/p$. Then the inequality \eqref{prima} is equivalent with

$$-\log \left[1-\cos \left[\frac{\pi  x}{2}\right]\right]+\frac{\log [1-\cos [\pi  x]]}{1-2 x}\ge 0, \ \ 0\le x\le 1/4,$$ or what is the same

$$\delta(x):=-({1-2 x})\log \left[1-\cos \left[\frac{\pi  x}{2}\right]\right]+{\log [1-\cos [\pi  x]]}\ge 0, \ \ 0\le x\le 1/4.$$
As $$\delta'(x)=\pi  x \cot \left[\frac{\pi  x}{4}\right]+2 \log \left[2 \sin\left[\frac{\pi  x}{4}\right]^2\right]-\frac{1}{2} \pi  \tan \left[\frac{\pi  x}{4}\right]<0,$$ we have $$\delta(x)\ge \delta(1/4)\ge 0.$$

Further
$$\cos \left[\frac{1}{2} p \cos^{-1} \left[1-\left(1-\cos \frac{\pi }{p}\right)^{\frac{p}{p-2}}\right]\right] \ge \sin\frac{\pi }{2 p}$$

if and only if $$\frac{\pi}{2}-\frac{1}{2} p \cos^{-1} \left[1-\left(1-\cos \frac{\pi }{p}\right)^{\frac{p}{p-2}}\right]\ge \frac{\pi}{2p}$$

if and only if

$${\pi}- p \cos^{-1} \left[1-\left(1-\cos \frac{\pi }{p}\right)^{\frac{p}{p-2}}\right]\ge \frac{\pi}{p}$$

if and only if \begin{equation}\label{hardy}\cos\left(\frac{\pi-\frac{\pi}{p}}{p}\right)\le 1-\left(1-\cos \frac{\pi }{p}\right)^{\frac{p}{p-2}},\ \ \ p\ge 4.\end{equation}
We must emphasis that the proof of inequality \eqref{hardy} below is rather long.
After the substitution $y=1/p$ the last inequality reduces to the inequality
$$\beta(y):=(1-2y)\log [1-\cos [y (\pi -\pi  y)]]-\log [(1-\cos [\pi  y])\ge 0,\ \ 0< y\le 1/4.$$
Then $$\beta'(y)=-\pi  \cot \left[\frac{\pi  y}{2}\right]-\pi  (1-2 y)^2 \cot \left[\frac{1}{2} \pi  (-1+y) y\right]-2 \log [1-\cos [\pi  (-1+y) y]]$$
and
\[\begin{split}-\beta''(y)&= \pi  (6-12 y) \cot \left[\frac{1}{2} \pi  (1-y ) y\right]\\&+\frac{1}{2} \pi^2  \left(-\csc\left[\frac{1}{2} \pi \text{  }y\right]^2+(1-2 y )^3 \csc\left[\frac{1}{2} \pi  (1-y ) y\right]^2\right).\end{split}\]

In order to continue let us prove the following lemma
\begin{lemma}\label{conti}
For $0< x\le \pi/4$ we have
\begin{equation}\label{prima}
1+\frac{1}{x^2}-\csc^2 x\ge 0,
\end{equation}
and 
\begin{equation}\label{seconda}
\cot x - \left(\frac{1}{x} - \frac{x}{2}\right)\ge 0.
\end{equation}

\end{lemma}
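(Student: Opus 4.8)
The plan is to treat the two inequalities of Lemma~\ref{conti} separately, reducing each to a monotonicity statement for an auxiliary function that vanishes at the left endpoint $x=0$. In both cases I expect the work to be light once the right algebraic manipulation is made, so the emphasis is on choosing that manipulation rather than on estimating.

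For the first inequality I would start from the identity $\csc^2 x = 1 + \cot^2 x$, which collapses the left-hand side to
\[
1+\frac{1}{x^2}-\csc^2 x = \frac{1}{x^2}-\cot^2 x .
\]
Since $0<x\le \pi/4$ forces $\cot x>0$, the claim is then equivalent to $\cot x\le \tfrac1x$, i.e. to $x\cos x\le \sin x$, i.e. to the classical inequality $\tan x\ge x$ on $(0,\pi/2)$. That inequality is immediate: setting $T(x)=\tan x-x$, one has $T(0)=0$ and $T'(x)=\sec^2 x-1=\tan^2 x\ge 0$, so $T$ is nondecreasing and hence nonnegative. This settles the first part.

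For the second inequality I would clear denominators by multiplying through by $x\sin x$, which is strictly positive on the range, so the sign is preserved. The claim becomes $\Lambda(x)\ge 0$ on $(0,\pi/4]$, where
\[
\Lambda(x):= x\cos x-\sin x+\frac{x^2}{2}\sin x .
\]
Here the key observation is that differentiation causes massive cancellation: the $x\cos x$ and $-\sin x$ contributions kill the first two terms of $\tfrac{x^2}{2}\sin x$'s derivative, leaving
\[
\Lambda'(x)=\Big(\cos x-x\sin x\Big)-\cos x+\Big(x\sin x+\frac{x^2}{2}\cos x\Big)=\frac{x^2}{2}\cos x ,
\]
which is nonnegative on $[0,\pi/2)$. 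Combined with $\Lambda(0)=0$ this yields $\Lambda\ge 0$, and dividing back by $x\sin x>0$ gives the desired bound $\cot x\ge \tfrac1x-\tfrac{x}{2}$.

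I do not expect a genuine analytic obstacle here; the only care needed is the sign bookkeeping when clearing denominators (legitimate because $x>0$ and $\sin x>0$ throughout $(0,\pi/4]$) and the verification that $\Lambda'$ truly collapses to $\tfrac{x^2}{2}\cos x$. One could instead argue through the power series $\tfrac1x-\cot x=\tfrac{x}{3}+\tfrac{x^3}{45}+\cdots$ and bound the tail against $\tfrac{x}{2}$, but the derivative-collapse argument is cleaner and sidesteps any tail estimation, so I would prefer it.
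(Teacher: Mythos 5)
Your proof is correct, and for the second inequality it takes a genuinely different and cleaner route than the paper. For \eqref{prima} both arguments ultimately rest on $\tan x\ge x$: the paper rewrites the claim as $v(x):=\sin x\sqrt{1+1/x^2}\ge 1$ and factors $v$ as $\frac{\tan x}{\sqrt{1+\tan^2x}}\cdot\frac{\sqrt{1+x^2}}{x}$, invoking the monotonicity of $y\mapsto y/\sqrt{1+y^2}$; your reduction via $\csc^2x=1+\cot^2x$ to $\cot x\le 1/x$ reaches the same fact with less machinery and is arguably the more transparent of the two. For \eqref{seconda} the paper expands $\cot x$ in its Bernoulli-number series, isolates the tail $A(x)=\sum_{n\ge2}\frac{(-1)^{n+1}2^{2n}B_{2n}}{(2n)!}x^{2n-1}$, uses positivity of the coefficients to see that $A(x)/x$ is increasing, and checks numerically that $A(\pi/4)/(\pi/4)<1/6$; this is exactly the ``tail estimation'' you chose to avoid. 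Your alternative --- clearing denominators to $\Lambda(x)=x\cos x-\sin x+\frac{x^2}{2}\sin x$ and observing the collapse $\Lambda'(x)=\frac{x^2}{2}\cos x\ge0$ with $\Lambda(0)=0$ --- is completely elementary (I verified the cancellation: the derivative of $x\cos x-\sin x$ is $-x\sin x$, which is exactly cancelled by the $x\sin x$ from differentiating $\frac{x^2}{2}\sin x$), requires no special-function identities or endpoint numerics, and in fact proves the inequality on all of $(0,\pi/2)$ rather than only $(0,\pi/4]$. The paper's series argument buys nothing extra here beyond fitting the author's general style of coefficient-positivity estimates; your version is shorter, sharper in range, and self-contained.
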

\begin{proof}[Proof of Lemma~\ref{conti}]
Prove first \eqref{prima}. It is equivalent with the inequality $$v(x):=\sin x \sqrt{1+\frac{1}{x^2}}\ge 1.$$ 

Further $$v(x)= \frac{\tan x}{\sqrt{1+\tan^2 x}}\frac{\sqrt{1+x^2}}{x}$$ which is clearly grater or equal to $1$ because $\tan x\ge x$, and $y/\sqrt{1+y^2}$ increases.

Prove now \eqref{seconda}.
First of all $$\cot x = \frac{1}{x}-\frac{1}{3} x-\frac{1}{45} x^3 -\dots =\sum_{n=0}^\infty \frac{(-1)^{n} 2^{2n} B_{2n}}{(2n)!} x^{2n-1},$$ were $B_{2n}$ are Bernoulli numbers. Further if $$A(x)=\sum_{n=2}^\infty \frac{(-1)^{n+1} 2^{2n} B_{2n}}{(2n)!} x^{2n-1},$$ then $$\cot x - \left(\frac{1}{x} - \frac{x}{2}\right)= \frac{x}{6}- A(x)=x\left(\frac{1}{6} -\frac{A(x)}{x}\right).$$

Since $\frac{(-1)^{n+1} 2^{2n} B_{2n}}{(2n)!}>0$ for $n>1$ we have $$\frac{A(x)}{x}=\frac{-\cot x + \frac{1}{x}-\frac{1}{3} x}{x}\le \frac{A(\pi/4)}{\pi/4}=\frac{4 \left(-1+\frac{4}{\pi }-\frac{\pi }{12}\right)}{\pi }<\frac{1}{6}.$$
This implies \eqref{seconda}.
\end{proof}
Further from \eqref{prima} and the simple inequality  $\csc x\ge \frac{1}{x},$ for $0\le x\le \pi/2$, we obtain 

\[\begin{split}\bigg(-\csc\left[\frac{1}{2} \pi \text{  }y\right]^2&+(1-2 y )^3 \csc\left[\frac{1}{2} \pi  (1-y ) y\right]^2\bigg)
\\& \ge - \csc\left[\frac{\pi  y}{2}\right]^2+ (1-2 y)^3 \left(\frac{1}{2} \pi  (1-y) y\right)^{-2}
\\&\ge -\left(\frac{4}{\pi^2 y^2}+1\right)+ (1-2 y)^3 \left(\frac{1}{2} \pi  (1-y) y\right)^{-2}
\\&
=-\frac{4}{\pi ^2 y^2}+\frac{4 (1-2 y)^3}{\pi ^2 (1-y)^2 y^2}-1.\end{split}\]
By using now \eqref{seconda} and the previous estimate we have that $$-\beta''(y)\ge \frac{8-28 y-\pi ^2 y+16 y^2-\pi ^2 y^2+14 \pi ^2 y^3-27 \pi ^2 y^4+21 \pi ^2 y^5-6 \pi ^2 y^6}{\pi  (-1+y)^2 y}$$ $$\ge \frac{8-28 y-\pi ^2 y+16 y^2-\pi ^2 y^2+14 \pi ^2 y^3-27 \pi ^2 y^4}{\pi  (-1+y)^2 y}.$$
Next if $$\gamma(y):=8-28 y-\pi ^2 y+16 y^2-\pi ^2 y^2+14 \pi ^2 y^3-27 \pi ^2 y^4$$ then $$\gamma'(y)=-28-\pi ^2+32 y-2 \pi ^2 y+42 \pi ^2 y^2-108 \pi ^2 y^3$$ and $$\gamma''(y)=32-2 \pi ^2+84 \pi ^2 y-324 \pi ^2 y^2>0,\ \  y\in[0,1/4].$$ So $\gamma'(y)$ is strictly increasing. Since $$\gamma'(1/4)=-20-\frac{9 \pi ^2}{16}<0$$ it follows that $\gamma$ is increasing. Thus $\gamma(y)\ge \gamma(1/4)=2-\frac{51 \pi ^2}{256}>0.$

Thus $\beta''(y)\le 0$ and so $\beta'$ is strictly decreasing. Since $\beta'(1/4)<0<\beta'(0)=+\infty$, it follows that $\beta$ increases on an interval $[0,r_0]$ and decreases on $[r_0,1/4]$. Since $\beta(0)>0$ and $ \beta(1/4)>0$, we obtain that $\beta$ is positive. This finishes the proof of \eqref{hardy}.

Assume now that  $0\le |x|\le \frac{\pi }{2}-\frac{2 \pi }{p}.$

Since $$\frac{\pi }{2}-\frac{2 \pi }{p}<\sin^{-1}\left[1-\left(1-\cos \frac{\pi }{p}\right)^{\frac{p}{p-2}}\right]$$ because

 $$\left(1-\cos \frac{\pi }{p}\right)^{\frac{p}{p-2}}+\cos \left[\frac{2 \pi }{p}\right]<1$$ we have again that $G(r)\ge G(1).$
Proceeding as in the case $\pi/2-2\pi/p\le x\le \pi/2$, and using the substitution $y=\pi/2-x$, this case reduces to showing that
$$\phi_1(y):=\left(\frac{1-\cos y}{1-\cos \frac{\pi }{p}}\right)^{-p/2} \left(1+\abs{\cos \left[\frac{p y}{2}\right]} \cot \frac{\pi }{2 p}\right)\le 1$$ for $2\pi/p\le y\le \pi-2\pi/p$ and

$$\phi_2(y):=\left(\frac{1-\cos y }{1-\cos \frac{\pi }{p}}\right)^{-p/2} \left(1+ \abs{\cos \left[\frac{p (y+\pi)}{2}\right]}\cot \frac{\pi }{2 p}\right)\le 1$$
 for $2\pi/p\le y\le \pi-2\pi/p.$

If $2\pi/p\le y\le \pi-2\pi/p$, then $\pi\le \frac{p}{2} y \le \frac{\pi p}{4}$ and thus there is a number $y'$ such that $\frac{p}{2} y'\in(\pi/2,\pi)$ and
$$\abs{\cos\frac{py}{2}}=-\cos \frac{p}{2} y'.$$
Then $\cos y\le \cos y'$, because  $0\le y'\le \frac{2\pi}{p}\le y\le {\pi}$ and thus $\phi_1(y)\le \chi(y')$ which is according to the previous case less or equal to $1$. Similarly we establish that $\phi_2(y)\le 1$.

\textbf{The case $2\le p\le 4$.}

Let $0\le x\le \pi$. $$G(r)=-\left(\frac{1+r^2}{2r}\right)^{p/2}+\left(1-\cos \frac{\pi }{p}\right)^{-p/2} \left(\frac{1+r^2+2 r \cos x}{2r}\right)^{p/2}.$$
Then \eqref{ale1} if and only if $$G(r)\ge -\cot\frac{\pi }{2 p}\cos \left[\frac{p}{2}  \left({\pi }-x\right)\right].$$

Let $$a=(1+r^2)/(2r).$$
Then $$H(a)=G(r)=-a^{p/2}+\left(1-\cos \frac{\pi }{p}\right)^{-p/2} \left(a+ \cos x\right)^{p/2}.$$

We have $$H'(a)=-\frac{1}{2}pa^{-1+\frac{p}{2}}+\frac{1}{2} p \left(1-\cos \frac{\pi }{p}\right)^{-p/2} (a+\cos x)^{-1+\frac{p}{2}}.$$
Then $H'(a)< 0$ if $$0<x< \cos^{-1}\left[1-\left(1-\cos \frac{\pi }{p}\right)^{\frac{p}{p-2}}\right] $$ and thus $G(r)\ge G(1)$.
So we need to prove that
$$\left(1-\cos \frac{\pi }{p}\right)^{-p/2} \left(1+ \cos x\right)^{p/2}\ge 1-\cot\frac{\pi }{2 p}\cos \left[\frac{p}{2}  \left({\pi }-x\right)\right]$$
i.e.
$$\left(\frac{1+ \cos x}{1-\cos \frac{\pi }{p}}\right)^{p/2}\ge 1-\cot\frac{\pi }{2 p}\cos \left[\frac{p}{2}  \left({\pi }-x\right)\right].$$
Let $$\chi_1(x)=\left(\frac{1+\cos x}{1-\cos \frac{\pi }{p}}\right)^{-p/2} \left(1-\cos \left[\frac{1}{2} p (\pi -x)\right] \cot \frac{\pi }{2 p}\right).$$ We should prove that $\chi_1(x)\le 1$.
We have that  $$\chi_1'(x)=\frac{p \left(\frac{1+\cos x}{1-\cos \frac{\pi }{p}}\right)^{-p/2} \left(-2 \cos \left[\frac{1}{2} (p-1) (\pi -x)\right] \cos \left[\frac{x}{2}\right] \cot \frac{\pi }{2 p}+\sin x\right)}{2 (1+\cos x)}.$$
By taking the substitution $y=\pi-x$, we arrive at the equality $\chi_1(y)=\chi(y)$ from the case $p>4$. The rest of the proof is the same as in the case $p>4$.

If $$\pi>x\ge \cos^{-1}\left[-1+\left(1-\cos \frac{\pi }{p}\right)^{\frac{p}{p-2}}\right] $$ then $$a_p=\frac{\cos x}{-1+\left(1-\cos \frac{\pi }{p}\right)^{\frac{p}{p-2}}}$$ is the only stationary point of $H(a)$. So we need to show that $$H(a_p)\ge  -\cot\frac{\pi }{2 p}\left(\cos \left[\frac{p}{2}  \left({\pi }-x\right)\right]\right),$$ which in view of substitution
$x=\pi-y$ is equivalent with the inequality

$$\cos \left[\frac{p y}{2}\right] \cot \frac{\pi }{2 p}+\left(\frac{\cos y \csc\frac{\pi }{2 p}^2}{2 \left(1-\cos \frac{\pi }{p}\right)^{-\frac{p}{p-2}}-2}\right)^{p/2}\geq \left(\frac{\cos y}{1-\left(1-\cos \frac{\pi }{p}\right)^{\frac{p}{p-2}}}\right)^{p/2}$$
or with its equivalent form
$$\frac{\cos \left[\frac{p y}{2}\right] \cot \frac{\pi }{2 p}}{\cos^{p/2} y}\geq\left(-1+\left(1-\cos \left[\frac{\pi }{p}\right]\right)^{\frac{p}{2-p}}\right)^{1-\frac{p}{2}}\left(1-\cos \left[\frac{\pi }{p}\right]\right)^{-p/2}$$ for
\[\begin{split}0\le y &\le \pi-\cos^{-1}\left[-1+\left(1-\cos \frac{\pi }{p}\right)^{\frac{p}{p-2}}\right]\\&=\pi/2- \sin^{-1}\left[1-\left(1-\cos \frac{\pi }{p}\right)^{\frac{p}{p-2}}\right]. \end{split}\]
Thus we have to prove \eqref{same} for $2<p\le 4$, i.e. $$\frac{2 \cos^{-1} \left[\left(1-\left(1-\cos \frac{\pi }{p}\right)^{\frac{p}{p-2}}\right) \tan \frac{\pi }{2 p}\right]}{p}\geq \cos^{-1} \left[1-\left(1-\cos \frac{\pi }{p}\right)^{\frac{p}{p-2}}\right].$$

Let $s=1-\left(1-\cos \frac{\pi }{p}\right)^{\frac{p}{p-2}}$ and $t= \frac{2}{p}$. Then

$$\frac{2 \cos^{-1} \left[\left(1-\left(1-\cos \frac{\pi }{p}\right)^{\frac{p}{p-2}}\right) \tan \frac{\pi }{2 p}\right]}{p}\ge t \cos^{-1}(ts),$$ and we should prove that $$t \cos^{-1}(ts)\ge \cos^{-1} s.$$ Let $H(t,s)={t \cos^{-1}(ts)}-{\cos^{-1} s}, \ \ \ (t,s)\in[0,1]\times[0,1].$ First of all
$$\lim_{p\to 2} 1-\left(1-\cos \frac{\pi }{p}\right)^{\frac{p}{p-2}}= 1-e^{-\pi/2}.$$
Further  $$H_s'(t,s)=\frac{1}{\sqrt{1-s^2}}-\frac{t^2}{\sqrt{1-s^2 t^2}}\ge 0.$$
If $2<p<3$, then for $t\in[2/3,1]$ and $s\in[1-e^{-\pi/2},1]$,
\[\begin{split}H(t,s)&\ge\eta(t):= H(t,1-e^{-\pi/2})\\&=-\cos^{-1}\left[1-e^{-\pi /2}\right]+t \cos^{-1}\left[\left(1-e^{-\pi /2}\right) t\right]\ge  0.\end{split}\] Namely for $c=1-e^{-\pi/2}$,
$$\eta''(t)= \frac{c \left(-2+c^2 t^2\right)}{\left(1-c^2 t^2\right)^{3/2}}<0.$$ Thus,
$$\eta(t)\ge \frac{1}{2}(\eta(2/3)+\eta(1)).$$ Hence
$$H(t,c)\ge \min\{H(2/3,c), H(1,c)\}=0.$$
If $3\le p\le 4$, then $t\in[1/2,2/3]$ and $s\in[\frac{7}{8},1].$ Then $$H(t,s)\ge \lambda(t)=H(t,7/8)=-\cos^{-1} \left[\frac{7}{8}\right]+t \cos^{-1} \left[\frac{7 t}{8}\right]>0.$$ Namely $$\lambda''(t)=\frac{-896+343 t^2}{\left(64-49 t^2\right)^{3/2}}< 0.$$ Thus $$\lambda(t)\ge \min\left\{\frac{2}{3} \cos^{-1}\left[\frac{7}{12}\right]-\cos^{-1}\left[\frac{7}{8}\right],\frac{1}{2} \cos^{-1}\left[\frac{7}{16}\right]-\cos^{-1}\left[\frac{7}{8}\right]\right\}>0.$$
\end{proof}

\section{Proof of Theorem~\ref{kalaj1}}
Define  $\vartheta_1(\theta)=\vartheta_{1,p}(\theta)$ as follows

For $\theta\in[0,2\pi]$ define

$$\phi_1(\theta)=\left\{
              \begin{array}{ll}
                -\cos {p\theta/2}, & \hbox{ if $0\le \theta\le 2\pi/p$ ;} \\

                -\cos {p/2}(2\pi-\theta), & \hbox{ if $2\pi-2\pi/p\le\theta\le 2\pi$ ;} \\
                \max\{|\cos {\frac{p\theta}{2}}|,|\cos {\frac{p}{2}}(2\pi-\theta)|\}, & \hbox{if $\frac{2 \pi }{p}\le \theta\le  \left(2-\frac{2}{p}\right) \pi $  ;} \\

              \end{array}
            \right.$$

$$\vartheta_1(\theta):=\left\{
                     \begin{array}{ll}
                       \phi(\theta), & \hbox{if $0\le \theta\le 2\pi $;} \\
                       \phi(-\theta), & \hbox{if $-2\pi\le \theta\le 0$.}
                     \end{array}
                   \right.$$
We will prove the theorem by using the following  lemmas
\begin{lemma}\label{l2} Let $p>2$. Then for complex numbers $z=|z|e^{it}$ and $w=|w|e^{is}$ we have
$$|z+\bar w|^p\le c_p (|z|^2+|w|^2)^{p/2}-d_p r^{p/2}\vartheta_1(t+s),$$ where $\vartheta_1$ is defined above, and $$c_p=\left[\sqrt{2}\cos \frac{\pi }{2 p}\right]^p$$ and $$d_p=\cos^{p-1}\frac{\pi}{2p}\sin\frac{\pi}{2p}.$$ This inequality is sharp. The equality is attained for $|z|=|w|\neq 0$ and $t+s\equiv\frac{\pi}{p}\mod \pi$.
\end{lemma}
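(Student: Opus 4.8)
The goal is to prove \eqref{calg} with $\mathcal{G}_p(z,w)=r^{p/2}\vartheta_1(t+s)$, where $r=\abs z\abs w$, following the same architecture as Lemma~\ref{nice}--Lemma~\ref{hard} but now optimizing the reverse inequality. The first move is the identity $\abs{z+\bar w}^2=\abs z^2+\abs w^2+2\abs z\abs w\cos(t+s)$, which shows that the whole estimate depends on the moduli only through $\abs z,\abs w$ and on the phases only through $\theta:=t+s$; this is exactly why $\vartheta_1(t+s)$ is the natural penalty. All three terms being symmetric in $\abs z,\abs w$ and positively homogeneous of degree $p$, I pass to the scale-invariant variable $a:=(\abs z^2+\abs w^2)/(2\abs z\abs w)\ge 1$ (with $a=1$ precisely when $\abs z=\abs w$). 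Dividing the desired inequality by $(2\abs z\abs w)^{p/2}$ turns it into
\[
\Theta(a):=c_p\,a^{p/2}-(a+\cos\theta)^{p/2}\ \ge\ 2^{-p/2}d_p\,\vartheta_1(\theta),
\]
to be verified for every $a\ge 1$ and every $\theta$.

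The second step is to minimize the left side in $a$. Since $\Theta'(a)=\tfrac p2\bigl(c_p a^{p/2-1}-(a+\cos\theta)^{p/2-1}\bigr)$ has at most one zero $a_\ast=\cos\theta/(c_p^{2/(p-2)}-1)$ on $[1,\infty)$, there are two regimes, exactly as in Lemma~\ref{hard}: for phases with $a_\ast\le 1$ the function $\Theta$ is increasing and its minimum sits at $a=1$, that is at $\abs z=\abs w$; for phases with $a_\ast>1$ the minimum sits at the interior critical point $a_\ast$. In the first regime the problem collapses to a one–variable inequality at $\abs z=\abs w$.

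At $\abs z=\abs w$ one has $a=1$, and writing $1+\cos\theta=2\cos^2(\theta/2)$ and $s=\theta/2$ the required estimate on the interval $[0,2\pi/p]$, where $\vartheta_1(\theta)=-\cos(p\theta/2)$, becomes
\[
\left(\frac{\cos s}{\cos\frac{\pi}{2p}}\right)^{p}\ \le\ 1+\tan\frac{\pi}{2p}\,\cos(ps),\qquad 0\le s\le \frac{\pi}{p}.
\]
This is precisely the reverse of the reduced form of \eqref{first} (that is, of \cite[Lemma~1]{ver}); the two sides coincide at $p=2$ and the inequality flips for $p>2$, which is the analytic reason the whole theorem reverses past $p=2$. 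I would prove it by the device used in Lemma~\ref{hard}, introducing the quotient $\chi(s)=\bigl(\cos s/\cos\frac{\pi}{2p}\bigr)^{-p}\bigl(1+\tan\frac{\pi}{2p}\cos(ps)\bigr)$ and showing $\chi\ge 1$ through the sign of $\chi'$, whose only critical point on $(0,\pi/p)$ is forced to be $s=\pi/(2p)$, where $\chi=1$. The constants are pinned by demanding equality together with first–order tangency at the extremal phase $\theta=\pi/p$ (where $\vartheta_1=0$): the value condition yields $c_p=(\sqrt2\cos\frac{\pi}{2p})^p$ and matching the $\theta$–derivatives yields $d_p$. The $\bmod\,\pi$ in the equality statement is produced by the reflection symmetries $\theta\mapsto 2\pi-\theta$ and $\theta\mapsto-\theta$ built into $\vartheta_1$, which also reduce the outer interval $[(2-2/p)\pi,2\pi]$ and the negative range to the interval already treated.

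The main obstacle is twofold. First, the interior regime $a_\ast>1$ requires, after an angular substitution, the monotonicity of a $W$–type quotient of the form $\cos(py/2)\cot\frac{\pi}{2p}/\cos^{p/2}y$ on a suitable subinterval, exactly the long argument carried out in the proof of Lemma~\ref{hard}; I expect this to need the delicate $\cot$/$\csc$ estimates of Lemma~\ref{conti}, and quite possibly a Mathematica check. Second, on the middle band $[2\pi/p,(2-2/p)\pi]$ the penalty $\vartheta_1$ is the maximum of $\abs{\cos(p\theta/2)}$ and $\abs{\cos(p(2\pi-\theta)/2)}$, so one must control whichever branch is larger; I would dominate the corresponding quotient by its value on $[0,2\pi/p]$ after reflecting the angle into that interval, the same reduction used to obtain $\phi_1,\phi_2\le 1$ in Lemma~\ref{hard}. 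Once both regimes and all angular ranges are settled, the equality analysis at $\abs z=\abs w$ and $\theta\equiv\pi/p\ (\bmod\,\pi)$ delivers the sharpness claim.
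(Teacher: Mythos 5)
Your architecture is the same as the paper's: the paper reduces Lemma~\ref{l2} to the one-variable statement of Lemma~\ref{ari}, passes to $a=(1+r^2)/(2r)$, splits into the regime where the minimum over $a$ sits at $a=1$ (i.e.\ $|z|=|w|$) and the regime with an interior stationary point, and treats the outer angular ranges by the symmetry of $\vartheta_1$. Your identification of the boundary case $a=1$ as the sharp case, and its reduction to $\left(\cos s/\cos\frac{\pi}{2p}\right)^p\le 1+\tan\frac{\pi}{2p}\cos(ps)$ on $0\le s\le\pi/p$, is exactly right: this is precisely the inequality the paper imports wholesale as \cite[Lemma~2]{ver} (after normalizing by $2^p$), and it is the reverse, for $p>2$, of the appendix Lemma~\ref{versi}.

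The gap is that neither load-bearing analytic step is actually carried out. (i) For the boundary inequality you assert that the quotient $\chi$ has a single critical point at $s=\pi/(2p)$ where $\chi=1$; that is the whole content of \cite[Lemma~2]{ver} and it is not automatic -- the sign analysis of $\chi'$ requires an argument of the type given for $\zeta$ in the proof of Lemma~\ref{hard} or for $g$ in Lemma~\ref{versi}, and on the middle band $[2\pi/p,(2-2/p)\pi]$, where $\vartheta_1$ is a maximum of two cosines, the reflection you invoke must be checked to preserve the inequality (the paper avoids all of this by citing Verbitsky directly). (ii) More seriously, you defer the interior-critical-point regime to ``the long argument carried out in the proof of Lemma~\ref{hard},'' but Lemma~\ref{hard} proves the \emph{opposite} inequality: its $W$-quotient monotonicity and the inequalities \eqref{same} and \eqref{hardy} produce a \emph{lower} bound for the critical value, whereas here you need an \emph{upper} bound for $Q(a_p)$. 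That machinery does not transfer, and the paper's actual proof of Lemma~\ref{ari} handles this regime by a completely different (and much cruder) estimate, exploiting the fact that the interior stationary point occurs far from the extremal configuration, so that bounds like $\Lambda(s)\le -2$ against a right-hand side bounded below by $-1$ suffice. As written, your plan for this case points at an argument that would not close it, so the proposal is a correct roadmap for the sharp boundary case but not a proof of the lemma.
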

\begin{lemma}\label{ari}
Let $p>2$. Then  the sharp inequality  \begin{equation}\label{sharp1}\left(1+r^2+2 r \cos t\right)^{\frac{p}{2}}\le 2^{\frac{p}{2}} \left(1+r^2\right)^{\frac{p}{2}} \cos^p\frac{\pi }{2 p}-\cos^{p-1}\frac{\pi}{2p}\sin\frac{\pi}{2p}r^{\frac{p}{2}}\vartheta_{1}(t)\end{equation} hold. The equality is attained if and only if $r=1$ and $t=\pm\frac{\pi}{p}$.

\end{lemma}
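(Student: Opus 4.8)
The plan is to prove \eqref{sharp1} by the same reduction-to-one-variable scheme already used for Lemma~\ref{posa} and Lemma~\ref{hard}; indeed \eqref{sharp1} is exactly the scalar form of Lemma~\ref{l2}, which it implies once one notes that all three terms of Lemma~\ref{l2} are homogeneous of degree $p$ in $(|z|,|w|)$, so that after scaling $|w|=1$ and setting $r=|z|$, $t=t+s$ one recovers \eqref{sharp1}. First I would divide \eqref{sharp1} by $(2r)^{p/2}$ and introduce $a=\frac{1+r^2}{2r}\ge 1$, with $a=1$ precisely at $r=1$; using $2^{p/2}\cos^{p}\frac{\pi}{2p}=(2\cos^{2}\frac{\pi}{2p})^{p/2}=K$ where $K=(1+\cos\frac{\pi}{p})^{p/2}$, the inequality becomes
\[
H(a):=K\,a^{p/2}-(a+\cos t)^{p/2}\ \ge\ 2^{-p/2}\cos^{p-1}\tfrac{\pi}{2p}\sin\tfrac{\pi}{2p}\,\vartheta_1(t).
\]
Since the right-hand side does not depend on $a$, everything reduces to locating $\min_{a\ge1}H(a)$.

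Next I would carry out the monotonicity analysis. As $p>2$, $H'(a)=\frac p2\big(Ka^{p/2-1}-(a+\cos t)^{p/2-1}\big)$ vanishes only at $a_\ast=\cos t\,\big(K^{2/(p-2)}-1\big)^{-1}$, and because $K>1$ this is positive exactly when $\cos t>0$ and is then a genuine minimum of $H$ (note $H(a)\to+\infty$). This splits the problem into two regimes. If $\cos t\le K^{2/(p-2)}-1$ (in particular whenever $\cos t\le0$) then $H$ increases on $[1,\infty)$, so $\min_{a\ge1}H=H(1)$ and the problem collapses to the single-variable inequality at $r=1$. If $\cos t> K^{2/(p-2)}-1$, which as one checks forces $t<\pi/p$ — the region where $\vartheta_1(t)=-\cos\frac{pt}{2}<0$ — the minimum is attained at the interior point $a_\ast$.

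In the first regime the claim reads $K-(1+\cos t)^{p/2}\ge 2^{-p/2}\cos^{p-1}\frac{\pi}{2p}\sin\frac{\pi}{2p}\,\vartheta_1(t)$; writing $1+\cos t=2\cos^{2}\frac t2$ this is purely trigonometric, and I would prove it, exactly as in Lemma~\ref{hard}, by showing that an auxiliary quotient $\chi(t)$ satisfies $\chi(t)\le1$ with its unique maximum at $t=\pi/p$. On the middle intervals, where $\vartheta_1$ is the maximum of two cosines, I would reduce to the basic branch $\vartheta_1(t)=-\cos\frac{pt}{2}$ by the substitution-and-comparison device that closes the proof of Lemma~\ref{hard}. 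Sharpness is then immediate: at $r=1$, $t=\pi/p$ one has $\vartheta_1(\pi/p)=-\cos\frac{\pi}{2}=0$ and $1+\cos\frac{\pi}{p}=2\cos^{2}\frac{\pi}{2p}$, so both main terms cancel, which also yields the equality configuration $|z|=|w|$, $t+s\equiv\frac{\pi}{p}\bmod\pi$ asserted for Lemma~\ref{l2}.

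The main obstacle is the second regime. There $\min H=H(a_\ast)=a_\ast^{p/2}\big(K-K^{p/(p-2)}\big)<0$, and feeding in the explicit $a_\ast$ turns the required bound into an inequality of exactly the shape of \eqref{same}/\eqref{hardy}: a comparison of a power of $\big(K^{2/(p-2)}-1\big)$ against $\cos\frac{pt}{2}\cot\frac{\pi}{2p}$. I expect this to require the same delicate estimates already isolated in Lemma~\ref{conti} (the bounds on $\cot x$ and $\csc^2 x$ near $0$), combined with the monotonicity of an auxiliary function of the type $W(y)=\cos\frac{py}{2}\cot\frac{\pi}{2p}/\cos^{p/2}y$, and — as in Lemma~\ref{hard} — the argument will likely have to be split at the threshold $p=4$, where the definition of $\vartheta_1$ changes character. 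Once both regimes are settled, \eqref{sharp1} holds with equality only at $r=1$, $t=\pm\frac{\pi}{p}$, completing the proof.
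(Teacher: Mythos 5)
Your reduction is set up correctly and is, after the change of variable, the same first step as the paper's, which works with $P(r)=r^{-p/2}\bigl((1+r^2+2r\cos t)^{p/2}-2^{p/2}(1+r^2)^{p/2}\cos^p\frac{\pi}{2p}\bigr)=-2^{p/2}H(a)$. But the proposal leaves two genuine gaps. The first is the case you yourself call ``the main obstacle'': the interior critical point $a_\ast=\cos t\,(K^{2/(p-2)}-1)^{-1}$ is never actually handled, only announced (``I expect\dots'', ``will likely have to be split at $p=4$''). In fact that regime is empty. Since $K=(1+\cos\frac{\pi}{p})^{p/2}$, one has $K^{2/(p-2)}=(1+\cos\frac{\pi}{p})^{p/(p-2)}>2$ for every $p>2$; this is equivalent to $2\cos^p\frac{\pi}{2p}>1$, i.e.\ to $p\ln\sec\frac{\pi}{2p}<\ln 2$, which follows from the monotonicity in $p$ given by $\ln\sec u\le u\tan u$ with $u=\frac{\pi}{2p}$ (essentially the function $\psi$ already analysed in the proof of Lemma~\ref{posa}). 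Hence $a_\ast<\cos t\le 1$ whenever $a_\ast>0$, so $H'>0$ on all of $[1,\infty)$ and $\min_{a\ge 1}H=H(1)$ unconditionally: the entire ``second regime'' should be replaced by this one observation rather than by a long argument for a configuration that never occurs. (Your computation of $H'$ is the correct one; the paper's printed $P'(r)$ carries $\sin^p\frac{\pi}{2p}$ where differentiating $P$ produces $\cos^p\frac{\pi}{2p}$, which is why its stationary-point case sits at $\cos t<0$ instead.)

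The second gap is that once everything is reduced to $a=1$, the whole content of the lemma is the one-variable inequality $2^p\abs{\cos\frac t2}^p\le 2^p\cos^p\frac{\pi}{2p}-\cos^{p-1}\frac{\pi}{2p}\sin\frac{\pi}{2p}\,\vartheta_1(t)$, and this you do not prove either. Saying it goes ``exactly as in Lemma~\ref{hard}'' is not accurate: the quotient $\chi$ there compares $(1-\cos y)^{p/2}$ against the constant $\cot\frac{\pi}{2p}$, whereas here one must compare $\cos^p\frac t2$ against $d_p=\cos^{p-1}\frac{\pi}{2p}\sin\frac{\pi}{2p}$ across all three branches of $\vartheta_1$. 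The paper disposes of this step by writing $\cos\frac t2=\abs{\sin\frac{t+\pi}{2}}$ and invoking Lemma~2 of \cite{ver}, whose minorant $\varphi_p$ coincides with $\vartheta_{2p}$, hence with $\vartheta_1$ after the shift; you should either cite that lemma or carry out the $\chi\le 1$ analysis branch by branch. Your verification of the equality case at $r=1$, $t=\pm\frac{\pi}{p}$ is correct.
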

Then we prove that
\begin{lemma} Let $1<p<2$. Then for complex numbers $z$ and $w$ we have
$$|z+\bar w|^p\le c_p (|z|^2+|w|^2)^{p/2}-d_p |zw|^{p/2}\cos\frac{p}{2}(\pi-\abs{t+s}).$$
\end{lemma}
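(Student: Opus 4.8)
The plan is to reduce the two--variable estimate to a one--variable inequality on the diagonal $|z|=|w|$, in complete analogy with Lemma~\ref{ari} and Lemma~\ref{hard}. Both sides are homogeneous of degree $p$ in $(|z|,|w|)$ and invariant under $z\leftrightarrow w$, and the angular part depends only on $\theta:=t+s$ through the even, $2\pi$--periodic data $\cos\theta$ and $\cos\frac p2(\pi-|\theta|)$. Hence I may normalise $|w|=1$, $|z|=r\in(0,1]$ and take $\theta\in[0,\pi]$, reducing the claim to
\begin{equation*}
(1+r^2+2r\cos\theta)^{p/2}\le c_p(1+r^2)^{p/2}-d_p\,r^{p/2}\cos\tfrac p2(\pi-\theta).
\end{equation*}
The constants are the sharp ones for this range: since $\bar p=p/(p-1)$ gives $\cos\frac{\pi}{2\bar p}=\sin\frac{\pi}{2p}$, one takes $c_p=\big(\sqrt2\sin\frac{\pi}{2p}\big)^p$ and the matching $d_p=\cos\frac{\pi}{2p}\sin^{p-1}\frac{\pi}{2p}$ (the analogue of the Lemma~\ref{l2} constant with $\cos\frac{\pi}{2p}$ and $\sin\frac{\pi}{2p}$ interchanged). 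With this choice equality should occur exactly at $r=1$, $\theta=\pi-\frac\pi p$, the angle at which the minorant $\cos\frac p2(\pi-\theta)$ vanishes.

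Next I would eliminate the radial variable. Dividing by $(2r)^{p/2}$ and putting $a=\frac{1+r^2}{2r}\ge1$ (with $a=1$ iff $r=1$), the inequality becomes $H(a)\ge\mathrm{const}$ for all $a\ge1$, where $H(a)=c_pa^{p/2}-(a+\cos\theta)^{p/2}$. Because $1<p<2$ forces $p/2-1<0$, the derivative $H'(a)=\frac p2\big(c_pa^{p/2-1}-(a+\cos\theta)^{p/2-1}\big)$ is easy to analyse: when $\cos\theta\ge0$ it is positive throughout, so $H$ increases and the minimum is at $a=1$; when $\cos\theta<0$ there is a single interior critical point $a^\ast=\cos\theta/\big(c_p^{2/(p-2)}-1\big)$, which is a minimum of $H$. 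If $a^\ast\le1$ the minimum is again at $a=1$; if $a^\ast>1$ one must estimate $H(a^\ast)$ on its own. This is exactly the dichotomy --- the two regimes separated by $\sin^{-1}[\,\cdot\,]$ --- met in the proof of Lemma~\ref{hard}.

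On the diagonal $r=1$ (i.e.\ $a=1$) the estimate collapses, after the substitution $u=\frac{\pi-\theta}{2}\in[0,\frac\pi2]$, to the single--variable inequality
\begin{equation*}
\Big(\frac{\sin u}{\sin\frac{\pi}{2p}}\Big)^p\le 1-\cot\frac{\pi}{2p}\,\cos(pu),\qquad u\in[0,\tfrac\pi2],
\end{equation*}
with equality at $u=\frac{\pi}{2p}$. This is the dual companion of the Verbitsky--type inequality of Lemma~\ref{posa}/Lemma~\ref{versi}, and I would prove it the same way: differentiate, factor out the positive power $\big(\sin u/\sin\frac{\pi}{2p}\big)^{-p}$, and reduce the sign question to showing that an auxiliary function of the form $\cos t-\cos((p-1)t)\cot\frac{\pi}{2p}$ changes sign exactly once, which follows from $\sin t\le\sin((p-1)t)$ on the relevant interval. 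Feeding this pointwise bound into Theorem~\ref{kalaj1} additionally requires that $|zw|^{p/2}\cos\frac p2(\pi-|\arg zw|)$ be plurisubharmonic for $1<p<2$; this is the $1<p<2$ analogue of Lemma~\ref{lemsub} and follows from the same maximum / sub-mean-value argument.

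The main obstacle will be the stationary sub-case $a^\ast>1$. Exactly as in the passage leading to \eqref{hardy}, bounding $H(a^\ast)$ produces a transcendental inequality in $p$ (with $\cos^{-1}$ and fractional powers of $1-\cos\frac\pi p$) that admits no trigonometric simplification and must be controlled by convexity and power-series estimates --- the $\cot$ Bernoulli bounds of Lemma~\ref{conti} being the natural tool. By contrast the diagonal inequality, once matched to the Verbitsky lemma already established in the paper, should be routine.
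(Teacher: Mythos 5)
Your reduction is the right one and matches the paper's: by homogeneity and symmetry the statement is equivalent to the one-parameter family of inequalities in Lemma~\ref{vanesa}, the substitution $a=\frac{1+r^2}{2r}\ge 1$ is exactly the paper's, the diagonal case $a=1$ does reduce (after $u=\frac{\pi-\theta}{2}$) to a Verbitsky-type one-variable inequality with equality at $u=\frac{\pi}{2p}$, and your identification of the equality configuration $r=1$, $\theta=\pi-\frac{\pi}{p}$ is correct. (Your normalization of $d_p$ differs from the effective constant in the paper's proof by a factor $2^p$, and correspondingly your displayed diagonal inequality should carry a $2^{-p}$ in front of $\cot\frac{\pi}{2p}\cos(pu)$; since the statement does not fix $d_p$, this is only bookkeeping.)

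The genuine gap is the stationary sub-case $a^{\ast}>1$, which you explicitly defer to ``convexity and power-series estimates.'' That case is where all the difficulty of the lemma lives, and nothing in your outline proves it: writing $H(a)=c_pa^{p/2}-(a+\cos\theta)^{p/2}$, one has $H(a^{\ast})=(a^{\ast})^{p/2}\bigl(c_p-c_p^{p/(p-2)}\bigr)$ with $a^{\ast}=\frac{-\cos\theta}{1-c_p^{2/(p-2)}}$, so the required bound becomes a lower estimate for $\cos^{p/2}(\pi-\theta)/\cos\frac{p}{2}(\pi-\theta)$ on an interval whose endpoint is a transcendental expression in $p$ --- precisely the kind of inequality (see \eqref{same} and \eqref{hardy}) that costs the paper several pages in the proof of Lemma~\ref{hard}. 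The paper's proof of Lemma~\ref{vanesa} avoids this entirely by a different organization: it treats $\lambda(a,t)$ as a function of two variables on $K=[1,\infty)\times[0,\pi]$ and first shows there is no interior critical point (this is the content of Lemma~\ref{lpe}: the bounds $X\le 1$ and $Y\ge 1$ force $\lambda_t\neq 0$ wherever $\lambda_a=0$). The minimum is therefore attained on $\partial K$, and the only edge on which an $a$-stationary point survives is $t=\pi$, where $\cos\frac{p}{2}(\pi-t)=1$ and the needed bound $\lambda(a_p,\pi)\le 0$ follows from two one-line estimates, namely $2^{p/2}\cos\frac{\pi}{2p}\sin^{p-1}\frac{\pi}{2p}\le\sin\frac{\pi}{p}<1$ and $2^{p/(2-p)}\sin^{2p/(2-p)}\frac{\pi}{2p}>2$. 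If you insist on minimizing over $a$ for each fixed angle you must actually prove the transcendental inequality; switching to the two-variable no-critical-point argument is the missing idea that lets the proof close.
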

which is equivalent with the following lemma:
\begin{lemma}\label{vanesa}
Let $1<p<2$ and let $$c_p=(\sqrt{2}\sin\frac{\pi}{2p})^p$$ and $$d_p=\left(2-2 \cos \left[\frac{\pi }{p}\right]\right)^{p/2} \cot \left[\frac{\pi }{2 q}\right].$$
 Then  $$\left(1+r^2+2 r \cos t\right)^{p/2}\le c_p \left(1+r^2\right)^{p/2} -d_p r^{p/2}\cos(\frac{p}{2}(\pi-\abs{t}).$$ 
\end{lemma}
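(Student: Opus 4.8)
The plan is to prove the statement directly in the written (normalized) form, exploiting its two symmetries. Since $\cos t$ and $\cos\frac p2(\pi-\abs t)$ are both even in $t$, I may assume $t\in[0,\pi]$. The substitution $a=\frac{1+r^2}{2r}$ is the key: both $\frac{1+r^2}{2r}$ and $\frac{1+r^2+2r\cos t}{2r}=a+\cos t$ are invariant under $r\mapsto 1/r$, so that $a$ ranges over $[1,\infty)$ with $a=1$ corresponding to $r=1$. Dividing the claimed inequality by $(2r)^{p/2}$ turns the cross term into the $a$-independent quantity $2^{-p/2}d_p\cos\frac p2(\pi-t)$, exactly as in the proof of Lemma~\ref{hard}. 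Thus it suffices to prove
\[
H(a):=c_p\,a^{p/2}-(a+\cos t)^{p/2}\ \ge\ 2^{-p/2}d_p\cos\tfrac p2(\pi-t)=:K(t),\qquad a\ge 1.
\]

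Next I would analyze $H$ in the variable $a$. Since $H'(a)=\frac p2\bigl(c_p\,a^{p/2-1}-(a+\cos t)^{p/2-1}\bigr)$ and $p/2-1<0$, the sign of $H'$ is governed by comparing $c_p$ with $\bigl(\frac{a}{a+\cos t}\bigr)^{1-p/2}$. One checks $c_p=(\sqrt2\sin\frac{\pi}{2p})^p>1$ for $1<p<2$. Hence whenever $\cos t\ge 0$ the ratio is $\le 1$ and $H$ is increasing, so $\min_{a\ge1}H=H(1)$; the same holds for every $t$ with $1+\cos t\ge c_p^{-1/(1-p/2)}$. On the complementary range ($\cos t$ near $-1$, i.e.\ $t$ near $\pi$) the bracket changes sign exactly once and $H$ attains its minimum at the interior point $a^\ast=\cos t/(c_p^{-1/(1-p/2)}-1)>1$. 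So the proof splits into a \emph{boundary case} (reduce to $H(1)\ge K$) and an \emph{interior case} (reduce to $H(a^\ast)\ge K$), precisely mirroring the two regimes $H'(a)<0$ versus $H'(a_p)=0$ of Lemma~\ref{hard}.

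In the boundary case, the substitution $t=\pi-2\sigma$ gives $\cos\frac t2=\sin\sigma$ and $\cos\frac p2(\pi-t)=\cos p\sigma$, and after division by $c_p$ the inequality $H(1)\ge K$ takes the Verbitsky-type shape
\[
\left(\frac{\sin\sigma}{\sin\frac{\pi}{2p}}\right)^{p}\ \le\ 1-\frac{2^{-p/2}d_p}{c_p}\,\cos p\sigma ,
\]
which has a double root at $\sigma=\frac{\pi}{2p}$, i.e.\ at the extremal configuration $r=1$, $t=\pi-\frac{\pi}{p}$ (where $\cos p\sigma=0$). I would prove it by the same route as Verbitsky's Lemma~\ref{versi} and the auxiliary functions $\chi,\zeta$ in the proof of Lemma~\ref{hard}: differentiate, and reduce stationarity to showing that an expression of the form $\cos\sigma-\cot\frac{\pi}{2p}\cos((p-1)\sigma)$ has a single sign change on the relevant interval, which pins the left-hand side's maximum at $\sigma=\frac{\pi}{2p}$.

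The main obstacle is the interior case. Using $a^\ast+\cos t=a^\ast c_p^{-1/(1-p/2)}$ one gets $H(a^\ast)=(a^\ast)^{p/2}\bigl(c_p-c_p^{-p/(2-p)}\bigr)$, so $H(a^\ast)\ge K$ collapses to a single inequality in $\cos t$, valid for $t$ near $\pi$ with no free parameter but $p$ — the analogue of inequality \eqref{hardy}. I expect to treat it exactly as \eqref{hardy} was treated: after an appropriate substitution reduce to a transcendental inequality $\beta(y)\ge 0$, show $\beta$ is concave by bounding $\beta''$ through the elementary estimates $1+\frac1{x^2}-\csc^2x\ge0$ and $\cot x\ge\frac1x-\frac x2$ of Lemma~\ref{conti}, and conclude positivity from the endpoint values. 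This is where essentially all the work lies, both because of the delicate sign control and because $a^\ast$ enters through the awkward exponent $1-p/2$; the monotonicity reductions and the boundary trig inequality are comparatively routine once Lemma~\ref{conti} and the method of Lemma~\ref{hard} are available.
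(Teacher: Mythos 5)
Your normalization and case split are sound: the substitution $a=(1+r^2)/(2r)$, the fact that $c_p=(\sqrt2\sin\frac{\pi}{2p})^p>1$ for $1<p<2$, the location of the interior minimizer $a^\ast=-\cos t/(1-c_p^{-2/(2-p)})$, and the reduction of the boundary case $H(1)\ge K$ to the Verbitsky-type inequality with equality at $\sigma=\frac{\pi}{2p}$ all agree with the paper (the last is exactly the paper's analysis of $\lambda(1,t)$ via the function $\eta$). But the architecture is genuinely different. You fix $t$ and minimize over $a$, which is the method of Lemma~\ref{hard} and forces you to prove $H(a^\ast(t))\ge K(t)$ for an entire range of $t$ near $\pi$. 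The paper instead treats $\lambda(a,t)$ as a function of two variables on $K=[1,\infty)\times[0,\pi]$ and shows it has \emph{no} interior stationary point: $\lambda_a=0$ together with $a>0$ forces $\cos t\le 0$, and there $\lambda_t<0$ by the estimates $X\le1\le Y$ of Lemma~\ref{lpe}. Hence the maximum sits on $\partial K$, and the interior-in-$a$ stationary point only has to be evaluated on the single edge $t=\pi$, where $\lambda(a_p,\pi)\le0$ follows from two short numerical facts ($2^{p/2}\cos\frac{\pi}{2p}\sin^{p-1}\frac{\pi}{2p}\le\sin\frac{\pi}{p}<1$ and $2^{p/(2-p)}\sin^{2p/(2-p)}\frac{\pi}{2p}>2$). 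What the paper's route buys is a complete bypass of the step you yourself identify as containing essentially all the work.

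That step is also where your proposal has a genuine gap. The interior case does not ``collapse to a single inequality in $\cos t$ with no free parameter but $p$'': with $y=\pi-t$ one gets $H(a^\ast)=(\cos y)^{p/2}\,(c_p^{2/(2-p)}-1)^{(2-p)/2}$, which must still be compared with $K\propto\cos\frac{py}{2}$ over the whole interval $0\le y\le\cos^{-1}\!\bigl(1-c_p^{-2/(2-p)}\bigr)$. Only after a further monotonicity argument for $W(y)=\cos\frac{py}{2}/\cos^{p/2}y$ — which for $1<p<2$ is \emph{increasing}, the opposite of the regime treated in Lemma~\ref{hard} — does the problem reduce to the endpoint inequality $\cos\bigl[\tfrac p2\cos^{-1}s\bigr]\le s\tan\frac{\pi}{2p}$ with $s=1-c_p^{-2/(2-p)}$. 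This is not inequality \eqref{hardy}: that one lives in the regime $p\ge4$, carries $\cot$ in place of $\tan$, and points the other way; and the present inequality degenerates to an equality as $p\to2$ (where $s\to1-e^{-\pi/2}$), so nothing in Lemma~\ref{conti} or the $\beta$-analysis can be cited verbatim and a fresh, delicate proof would be required. Either supply that proof or adopt the paper's two-variable stationary-point argument, which renders the whole interior case unnecessary.
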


We postpone the proofs of Lemma~\ref{ari} and Lemma~\ref{vanesa}, and prove the theorem.
\begin{proof}[Proof of Theorem~\ref{kalaj1}]
For $p>1$ and $z=re^{i\theta}$ define
$$\Psi_p(z)=\left\{
  \begin{array}{ll}
    r^{p/2}\cos\left[\frac{p}{2}(\pi-\abs{\theta})\right], & \hbox{if $p<2$;} \\
    r^{p/2}\vartheta_{1}(t), & \hbox{if $p>2$.}
  \end{array}
\right.$$ Prove that for $p>1$, $\Psi_p$ is subharmonic on $\mathbf{C}$.
Let $1<p<2$.
Notice that $$\max\{r^{p/2} \cos[p/2(\pi -\theta)],r^{p/2} \cos[p/2(\pi+\theta)]\}=r^{p/2}\cos(\frac{p}{2}(\pi-\abs{\theta}).$$ Thus $$\Psi_p(z)=r^{p/2}\cos(\frac{p}{2}(\pi-\abs{\theta})$$ is subharmonic in $z\neq 0$. The subharmonicity at $z=0$ is verified by proving sub-mean inequality:
$$\frac{1}{2r\pi}\int_{-\pi}^{\pi}\Psi_p(re^{it})dt =\frac{4 \sin\left[\frac{p \pi }{2}\right]}{2r\pi p}\ge \Psi_p(0)=0.$$
For $p>2$, the proof of the fact that the function $\Psi_p$ is subharmonic on $\mathbf{C}$ is similar to the proof of subharmonicity of $\Phi_p$ in  Lemma~\ref{lemsub2}, so we skip the details.
Let $\mathcal{G}_p(z,w)=\Psi_p(zw)$. Then $\mathcal{G}_p$ is plurisubharmonic on $\mathbf{C}^2$. Thus $$\mathcal{K}(z)=\mathcal{G}_p(g(z),h(z))$$ is subharmonic on the unit disk. From Lemma~\ref{l2} and Lemma~\ref{ari}, we have

\begin{equation}\label{tan}|g(z)+\overline{h(z)}|^p\le c_p (|g(z)|^2+|h(z)|^2)^{p/2} -d_p \mathcal{K}(z).\end{equation} By integrating \eqref{tan} over $r\mathbf{T}$, $0<r<1$ and letting $r\to 1^-$, we obtain

$$\int_{\mathbf{T}} |g(z)+\overline{h(z)}|^p\le c_p \int_{\mathbf{T}} (|g(z)|^2+|h(z)|^2)^{p/2}- d_p\int_{\mathbf{T}}  \mathcal{K}(z).$$

Since $\Re(g(0)h(0))\le 0$, it follows that $\theta=\arg (g(0)h(0))\in (\pi/2,3\pi/2)$. Further for $p\ge 4$, $2\pi/p\le \pi/2\le 2\pi-2\pi/p$, and thus  $\vartheta_1(\theta)\ge 0$ and so $$\mathcal{K}(0)=|g(0)h(0)|^{p/2} \vartheta_1(\theta)\ge 0.$$ Thus $$\int_{\mathbf{T}}  \mathcal{K}(z)\ge 0.$$ This implies  that $$\int_{\mathbf{T}} |g(z)+\overline{h(z)}|^p\le c_p \int_{\mathbf{T}} (|g(z)|^2+|h(z)|^2)^{p/2}.$$

If $2\le p\le 4$, then $\pi/2\le 2\pi/p$  and thus $\theta\le 2\pi/p$ or $2\pi-2\pi/p\le \theta\le 2\pi$. Then $-\cos {p\theta/2}\ge 0$, and as before $$\int_{\mathbf{T}}  \mathcal{K}(z)\ge 0.$$

If $1\le p\le 2$, $$\mathcal{K}(0)=|g(0)h(0)|^{p/2} \cos p/2(\pi-\pi)=|g(0)h(0)|^{p/2}\cos\frac{p}{2}(\pi-\theta)\ge 0.$$ This finishes the proof.
\end{proof}

\begin{proof}[Proof o Lemma~\ref{ari}]
Define  $$P(r)=r^{-p/2} \left(-2^{p/2} \left(1+r^2\right)^{p/2} \cos^p\frac{\pi }{2 p}+\left(1+r^2+2 r \cos t\right)^{p/2}\right).$$

We should prove that $$P(r)\le -\cos^{p-1}\frac{\pi}{2p}\sin\frac{\pi}{2p}\vartheta_1(t).$$
We first have
$$P'(r)=\frac{p(r^2-1)}{r^{p/2}} \left(\left(1+r^2+2 r \cos t\right)^{\frac{p}{2}-1}-2^{p/2} \left(1+r^2\right)^{\frac{p}{2}-1} \sin^p\frac{\pi }{2 p}\right).$$
Then $P'(r)\ge 0$ if
\begin{equation}\label{mer} -\sec t\left(1- \left(\sqrt{2}\sin  \left[\frac{\pi }{2 p}\right]\right)^{\frac{2p}{p-2}}\right)\ge 1\end{equation} and thus  $$P(r)\le P(1)=\left(-2^p \cos^p\frac{\pi }{2 p}+(2+2 \cos t)^{p/2}\right)=2^p(\cos^p\frac{t}{2}-\cos^p\frac{\pi }{2 p}).$$
To continue, notice that in \cite[Lemma~2]{ver}, has been defined the function $\varphi=\varphi_p$, which coincides with our function $\vartheta_{2p}$. From \cite[Lemma~2]{ver} we obtain
\[\begin{split}2^p\cos^p\frac{t}{2}&=2^p|\sin\frac{t+\pi}{2}|^p\\&\le 2^{p}  \cos^p\frac{\pi }{2 p}-\cos^{p-1}\frac{\pi}{2p}\sin\frac{\pi}{2p}\varphi_{p}(\frac{t+\pi}{2})
\\&= 2^{p}  \cos^p\frac{\pi }{2 p}-\cos^{p-1}\frac{\pi}{2p}\sin\frac{\pi}{2p}\vartheta_{2p}(\frac{t+\pi}{2})
\\&= 2^{p}  \cos^p\frac{\pi }{2 p}-\cos^{p-1}\frac{\pi}{2p}\sin\frac{\pi}{2p}\vartheta_{1}(t)
.\end{split}\]
This finishes the proof of the case \eqref{mer}.
If \begin{equation}\label{mer1} -\cos t\left(1- \left(\sqrt{2}\sin  \left[\frac{\pi }{2 p}\right]\right)^{\frac{2p}{p-2}}\right)^{-1}> 1,\end{equation}

then $P(r)$ has a stationary point in $(0,1)$. Let $a=\frac{1+r^2}{2r}$. Then $1\le a< \infty$. Define
$$P(r)=Q(a)=\left(-2^{p} a^{p/2} \cos^p\frac{\pi }{2 p}+2^{p/2}\left(a+ \cos t\right)^{p/2}\right).$$ Then $P'(r)=Q'(a)a'(r)$, and so $P'(r)=0$ if and only if $Q'(a)=0$. The stationary point is
$$a_p=\frac{-\cos t}{1- \left(\sqrt{2}\sin  \left[\frac{\pi }{2 p}\right]\right)^{\frac{2p}{p-2}}}.$$

As $a_p\ge 1$ and $$\left( 1-\left(\sqrt{2}\sin  \left[\frac{\pi }{2 p}\right]\right)^{\frac{2p}{p-2}}\right)>0,$$ it follows that $\cos t< 0$.
By assuming without loos of generality  that $0\le t\le 2\pi $, we have from \eqref{mer1} that

$$\frac{\pi}{2}+\sin^{-1}\left[ 1-\left(\sqrt{2}\sin  \left[\frac{\pi }{2 p}\right]\right)^{\frac{2p}{p-2}}\right]<t<\frac{3 \pi}{2} -\sin^{-1}\left[ 1-\left(\sqrt{2}\sin  \left[\frac{\pi }{2 p}\right]\right)^{\frac{2p}{p-2}}\right].$$
We have to prove that

 \begin{equation}\label{QQ}L(t):=Q(a_p)\le -\cos^{p-1}\frac{\pi}{2p}\sin\frac{\pi}{2p} \vartheta_1(t).\end{equation}
Since $L(t)=L(2\pi-t)$ and $\vartheta_1(t)=\vartheta_1(2\pi-t)$, we need to consider only the case $t\in[0,\pi]$, i.e. the case
$$\frac{\pi}{2}+\sin^{-1}\left[ 1-\left(\sqrt{2}\sin  \left[\frac{\pi }{2 p}\right]\right)^{\frac{2p}{p-2}}\right]\leq t<\pi.$$
Let $p_0\approx 2.45$ be the only solution of the equation $$\frac{\pi}{2}+\sin^{-1}\left[ 1-\left(\sqrt{2}\sin  \left[\frac{\pi }{2 p}\right]\right)^{\frac{2p}{p-2}}\right]=\frac{2 \pi }{p}$$ on $[2,\infty)$. Then $$t_p:=\frac{\pi}{2}+\sin^{-1}\left[ 1-\left(\sqrt{2}\sin  \left[\frac{\pi }{2 p}\right]\right)^{\frac{2p}{p-2}}\right]\left\{
                                                                  \begin{array}{ll}
                                                                    \le  \frac{2 \pi }{p}, & \hbox{if $p\le p_0$;} \\
                                                                    >\frac{2 \pi }{p}, & \hbox{if $p>p_0$.}
                                                                  \end{array}
                                                                \right.$$
We divide the rest of the proof into two cases.

\textbf{The case $2\le p\le p_0  \wedge t\le \frac{2\pi}{p}$}.

Since $t_p\le 2\pi/p$,  we have for $t_p\le t\le2\pi/p   $, $$\vartheta_1(t)= -\cos \left[\frac{pt}{2}\right]$$ by taking the substitution $s=t-\pi/2$,  the inequality \eqref{QQ} reduces to

\begin{equation}\label{redu}\begin{split}&\frac{\left(1-2^{p+\frac{p^2}{4-2 p}} \cos \left[\frac{\pi }{2 p}\right] \sin^{\frac{p^2}{2-p}}  \left[\frac{\pi }{2 p}\right]\right)}{\left({-1+2^{\frac{p}{2-p}} \sin^{\frac{2 p}{2-p}}  \left[\frac{\pi }{2 p}\right]}\right)^{p/2}}  \sin^{p/2}  s
\\&\leq \cos \left[\frac{\pi }{2 p}\right]^{p-1}  \sin  \left[\frac{\pi }{2 p}\right]\cos [\frac{p}{2}(\pi/2+s)],\end{split}\end{equation} provided that
\begin{equation}\label{case3}\sin^{-1}\left[ 1-\left(\sqrt{2}\sin  \left[\frac{\pi }{2 p}\right]\right)^{\frac{2p}{p-2}}\right]\leq s<\frac{\pi}{2}. \end{equation}

Let $$v(s)=\cos \left[\frac{1}{2} p \left(\frac{\pi }{2}+s\right)\right]\sin^{-p/2}  s.$$ Then $$v'(s)= \frac{1}{2} p (\sin s)^{-1-\frac{p}{2}} \sin \left[\frac{1}{4} (p-2) (\pi +2 s)\right]\ge 0.$$ So $v$ is increasing.
By plugging $$s=\sin^{-1} \left[1- \left[\sqrt{2} \sin \frac{\pi }{2 p}\right]^{\frac{2 p}{p-2}}\right]$$ in \eqref{redu}, it reduces to the inequality

\[\begin{split}M&=2^p \cos\left[\frac{\pi }{2 p}\right]-2^{\frac{p^2}{2 (p-2)}} \sin^{\frac{p^2}{p-2}} \left[\frac{\pi }{2 p}\right]
\\&+\cos\left[\frac{\pi }{2 p}\right]^{p-1} \cos\left[\frac{p}{2}  \cos^{-1}\left[2^{\frac{p}{p-2}} \sin^{\frac{2 p}{p-2}} \left[\frac{\pi }{2 p}\right]-1\right]\right] \sin \left[\frac{\pi }{2 p}\right]\ge 0.\end{split}\]

For $p\ge 2$, we have $$2^p \cos\left[\frac{\pi }{2 p}\right]\ge 2^2 \frac{\sqrt{2}}{2}=2\sqrt{2}.$$  Further

$$2^{\frac{p^2}{2 (p-2)}} \sin^{\frac{p^2}{p-2}} \left[\frac{\pi }{2 p}\right]=\left(\sqrt{2}\sin\frac{\pi}{2p}\right)^{\frac{p^2}{p-2}}<1$$
and
$$\cos\left[\frac{\pi }{2 p}\right]^{p-1} \cos\left[\frac{p}{2}  \cos^{-1}\left[2^{\frac{p}{p-2}} \sin^{\frac{2 p}{p-2}} \left[\frac{\pi }{2 p}\right]-1\right]\right] \sin \left[\frac{\pi }{2 p}\right]\ge -1.$$

So $$M\ge 2(\sqrt{2}-1)>0.$$

\textbf{The case $(2\le p\le p_0  \wedge t> \frac{2\pi}{p})\vee (p>p_0)$}.
We should prove our inequality \eqref{sharp1} for $$\max\left\{\frac{2\pi}{p}, t_p\right\}\le t\le \pi.$$
By taking the substitution $t=s+\pi/2$, the condition \eqref{mer1} reduces to
\begin{equation}\label{bravo}\max\left\{\frac{2\pi}{p}-\frac{\pi}{2}, t_p-\frac{\pi}{2}\right\} \le s\le \pi/2.\end{equation}
Then
$$\vartheta_1(s+\pi/2)=\max\{|\cos {\frac{p }{2}(s+\pi/2)}|,|\cos {\frac{p}{2}}(3\pi/2-s)|\}$$
As in the previous case we consider the function  $$\Lambda(s)=\frac{\left(1-2^{p+\frac{p^2}{4-2 p}} \cos \left[\frac{\pi }{2 p}\right] \sin  \left[\frac{\pi }{2 p}\right]^{\frac{p^2}{2-p}}\right)}{\left({-1+2^{\frac{p}{2-p}} \sin^{\frac{2 p}{2-p}}  \left[\frac{\pi }{2 p}\right]}\right)^{p/2}}  \sin^{p/2}  s$$ or what is the same
\begin{equation}\label{lama} \Lambda(s)=\frac{\left(1-2^p \cos \left[\frac{\pi }{2 p}\right]\left({\left[\sqrt{2} \sin  \frac{\pi }{2 p}\right]^{\frac{2 p}{2-p}}}\right)^{p/2}\right)}{\left({-1+\left[ \sqrt{2}\sin  \frac{\pi }{2 p}\right]^{\frac{2 p}{2-p}}}\right)^{p/2}}  \sin^{p/2}  s\end{equation} and prove that $$\Lambda(s) \leq -\cos \left[\frac{\pi }{2 p}\right]^{p-1}  \sin  \left[\frac{\pi }{2 p}\right]\vartheta_1(s+\pi/2),$$
provided that \eqref{bravo}. Since $(-1+u)^{-q} \left(1-a u^q\right)$, increases in $u$ for $q>1$, $a>0$ and $u>1$, from \eqref{lama}, it follows that  \begin{equation}\label{lam}\Lambda(s)\le -2^p \cos \left[\frac{\pi }{2 p}\right]\sin^{p/2}s.\end{equation}

If $t_p>\frac{2\pi}{p}$, i.e. if $p>p_0$, then for $t_p-\frac{\pi}{2}\le s\le \pi/2$ we have
\[\begin{split}\Lambda(s)&\le -2^p \cos \left[\frac{\pi }{2 p}\right]\sin^{p/2}(t_p-\frac{\pi}{2})\\&\le -4 \cos\left[\frac{\pi }{2 p}\right] \left(1-2^{\frac{p}{p-2}} \sin^{\frac{2 p}{p-2}} \left[\frac{\pi }{2 p}\right]\right)^{p/2}.\end{split}\]
So $$\Lambda(s)< -2\le -1\le -\cos \left[\frac{\pi }{2 p}\right]^{p-1}  \sin  \left[\frac{\pi }{2 p}\right]\vartheta_1(s+\pi/2).$$

The only case remained is $t_p\le \frac{2\pi}{p}\wedge t>\frac{2\pi}{p}$. So we consider the remained case $p\le p_0$ and $$\frac{2\pi}{p}-\frac{\pi}{2}\le s\le \frac{\pi}{2}.$$

We also have $$\Lambda(s)\le   -2^p \cos \left[\frac{\pi }{2 p}\right]\sin^{p/2}(\frac{2\pi}{p}-\frac{\pi}{2})\le -2^p \cos \left[\frac{\pi }{2 p}\right]\sin^{p/2}(t_p-\frac{\pi}{2})\le -2.$$ The rest is the same as the previous part of the proof.

 This finishes the proof.
\end{proof}

\begin{proof}[Proof of Lemma~\ref{vanesa}]
Let $$a=\frac{1+r^2}{2r}.$$ Then we should prove that $$\lambda(a,t)=(a+\cos t)^{\frac{p}{2}}+2^{\frac{p}{2}} \cos \frac{\pi }{2 p} \cos \left[\frac{p}{2}  (\pi -t)\right] \sin^{p-1} \frac{\pi }{2 p}-(2a)^{\frac{p}{2}}  \sin^p\frac{\pi }{2 p}\le 0$$ for $(a,t)\in K:=[1,\infty)\times [0,\pi]$.

 We first show that $\lambda$ has not stationary points in the interior of $K$.
 We have $$\lambda_t = \frac{1}{2} p \left(2^{p/2} \cos \frac{\pi }{2 p} \sin^{p-1} \frac{\pi }{2 p} \sin \left[\frac{1}{2} p (\pi -t)\right]-(a+\cos t)^{-1+\frac{p}{2}} \sin t\right)$$ and

$$\lambda_a = \frac{1}{2} p (a+\cos t)^{-1+\frac{p}{2}}-2^{-1+\frac{p}{2}} a^{-1+\frac{p}{2}} p \sin^p\frac{\pi }{2 p}.$$

If $\lambda_a=0$, then $a= \frac{\cos t}{-1+\left(2^{p/2} \sin^p\frac{\pi }{2 p}\right)^{\frac{2}{p-2}}}$.
As
$\sqrt{2}\sin\frac{\pi}{2p}\ge 1$, it follows that $${-1+\left(2^{p/2} \sin^p\frac{\pi }{2 p}\right)^{\frac{2}{p-2}}}<0,$$ and thus $a\ge 0$ if $t\in[\pi/2,\pi]$. So $\lambda_t=0$ if and only if \[\begin{split} Z=2^{p/2} &\cos \frac{\pi }{2 p} \sin^{p-1} \frac{\pi }{2 p} \sin \left[\frac{1}{2} p (\pi -t)\right]\\&-(-\cos t)^{\frac{1}{2} (p-2)} \left(-1+\frac{1}{1-\left(2^{p/2} \sin^p\frac{\pi }{2 p}\right)^{\frac{2}{p-2}}}\right)^{\frac{1}{2} (p-2)} \sin t=0.\end{split}\]
Further let $$X:=2^{p/2}\cos \left[\frac{\pi }{2p}\right]\sin^{p-1} \left[\frac{\pi }{2p}\right]$$ and $$Y :=\left(-1+\frac{1}{1-\left(2^{p/2} \sin^p\frac{\pi }{2 p}\right)^{\frac{2}{p-2}}}\right)^{\frac{1}{2} (p-2)}.$$
In order to continue prove the following lemma
\begin{lemma}\label{lpe}
If $1\le p\le 2$, we have $X\le 1$ and $Y\ge 1$.
\end{lemma}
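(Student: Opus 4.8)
The plan is to prove the two assertions $X\le 1$ and $Y\ge 1$ separately, in each case reducing to a one-variable inequality in $x=1/p\in[1/2,1]$ and then to an elementary monotonicity or concavity statement.

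\textbf{The inequality $Y\ge 1$.} First I would record that $\sqrt 2\sin\frac{\pi}{2p}\ge 1$ on $[1,2]$, since $\sin$ increases on $[0,\pi/2]$ and $\frac{\pi}{2p}\ge\frac\pi4\iff p\le 2$; hence $u:=(\sqrt2\sin\frac{\pi}{2p})^p\ge 1$ and the base $-1+\bigl(1-u^{2/(p-2)}\bigr)^{-1}$ of $Y$ is positive. Because the exponent $\frac{p-2}{2}$ is negative, $Y\ge 1$ is equivalent to that base being $\le 1$, i.e. to $u^{2/(p-2)}\le\frac12$. Writing $u^{2/(p-2)}=\bigl(2\sin^2\frac{\pi}{2p}\bigr)^{p/(p-2)}=\bigl(1-\cos\frac\pi p\bigr)^{p/(p-2)}$ and substituting $x=1/p$, one multiplies the logarithmic form by $\frac{p-2}{p}<0$ and obtains the equivalent statement $g(x):=\log(1-\cos\pi x)-(2x-1)\log 2\ge 0$ on $[1/2,1]$.

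This I would settle by concavity. One computes
\[
g'(x)=\frac{\pi\sin\pi x}{1-\cos\pi x}-2\log 2=\pi\cot\tfrac{\pi x}{2}-2\log 2,
\]
which is strictly decreasing on $[1/2,1]$ since $\cot$ decreases there; thus $g$ is concave. As $g(1/2)=\log 1=0$ and $g(1)=\log 2-\log 2=0$, the chord joining the endpoints is the zero function, and a concave function lies above its chords, so $g\ge 0$ throughout. This gives $Y\ge 1$ for $1\le p<2$, the value $p=2$ being the limiting case (where $u^{2/(p-2)}\to e^{-\pi/2}<\frac12$).

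\textbf{The inequality $X\le 1$.} Writing $\log X=\frac p2\log 2+\log\cos\frac{\pi}{2p}+(p-1)\log\sin\frac{\pi}{2p}$ and again setting $x=1/p$, it suffices to show $h(x):=\frac12\log 2+x\log\cos\frac{\pi x}{2}+(1-x)\log\sin\frac{\pi x}{2}\le 0$ on $[1/2,1]$, where $h(1/2)=0$. I would prove $h$ is nonincreasing, so that $h\le h(1/2)=0$. Differentiating and putting $\phi=\frac{\pi x}{2}\in[\pi/4,\pi/2]$ gives $h'=\Psi(\phi):=\log\cot\phi-\phi\tan\phi+\frac{\pi-2\phi}{2}\cot\phi$, with $\Psi(\pi/4)=0$. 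The crux is that $\Psi\le 0$. Here the identities $\tan\phi+\cot\phi=\frac{2}{\sin 2\phi}$ and $\frac{d}{d\phi}\log\cot\phi=-\frac{2}{\sin 2\phi}$ collapse the derivative to
\[
\Psi'(\phi)=-\frac{4}{\sin 2\phi}-\phi\sec^2\phi-\frac{\pi-2\phi}{2}\csc^2\phi,
\]
in which every term is manifestly $\le 0$ on $[\pi/4,\pi/2]$ (there $\sin 2\phi>0$ and $\pi-2\phi\ge 0$). Hence $\Psi$ is strictly decreasing from $\Psi(\pi/4)=0$, so $\Psi\le 0$, i.e. $h'\le 0$ and $h\le 0$, yielding $X\le 1$ on $[1,2]$ with equality at $p=2$.

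\textbf{Main obstacle.} The genuinely delicate point is $X\le 1$: both $h$ and $h'$ vanish at the left endpoint $x=1/2$, so a single monotonicity check is not enough and one is forced to a second differentiation. What keeps that step clean, rather than a messy sign analysis, is precisely the cancellation furnished by $\tan\phi+\cot\phi=\frac{2}{\sin2\phi}$, which renders $\Psi'$ a sum of three terms of constant negative sign on $[\pi/4,\pi/2]$; locating that cancellation is the key to the whole argument.
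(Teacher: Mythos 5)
Your proposal is correct. The $Y\ge 1$ part is essentially the paper's own argument: both reduce the claim to $2^{1-1/p}\sin\frac{\pi}{2p}\ge 1$, substitute $x=1/p$, and conclude by concavity of a logarithmic function whose values at both endpoints $x=1/2$ and $x=1$ vanish (your $g$ is exactly twice the paper's, via $1-\cos\pi x=2\sin^2\frac{\pi x}{2}$). For $X\le 1$, however, you take a genuinely different and much heavier route. The paper simply factors
$$X=2^{p/2}\cos^{p/2}\Big[\frac{\pi}{2p}\Big]\sin^{p/2}\Big[\frac{\pi}{2p}\Big]\cdot\cot^{1-\frac p2}\Big[\frac{\pi}{2p}\Big]=\sin^{p/2}\Big[\frac{\pi}{p}\Big]\cdot\cot^{1-\frac p2}\Big[\frac{\pi}{2p}\Big],$$
and observes that each factor is at most $1$ (since $\frac{\pi}{2p}\in[\pi/4,\pi/2]$ gives $\cot\frac{\pi}{2p}\in[0,1]$ and the exponent $1-\frac p2\ge 0$). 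That is a one-line algebraic argument with no calculus. Your version instead passes to $h(x)=x\log X$, differentiates twice, and relies on the cancellation $\tan\phi+\cot\phi=\frac{2}{\sin 2\phi}$ to make $\Psi'$ term-by-term negative; I checked the computation of $\Psi$ and $\Psi'$ and it is correct, and your observation that both $h$ and $h'$ vanish at $x=1/2$ correctly explains why a second differentiation is forced in your setup. So both proofs are valid; the paper's buys brevity for $X$ by spotting the double-angle factorization, while yours is a more mechanical but self-contained monotonicity analysis that does not require noticing that identity.
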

\begin{proof}[Proof of Lemma~\ref{lpe}]
First of all $$X= 2^{p/2} \cos^{p/2}\left[\frac{\pi }{2 p}\right]\sin ^{p/2}\left[\frac{\pi }{2 p}\right]\cdot\cot\left[\frac{\pi }{2 p}\right]^{1-\frac{p}{2}}.$$ So

$$X= \sin ^{p/2}\left[\frac{\pi }{p}\right] \cot^{1-\frac{p}{2}}\left[\frac{\pi }{2 p}\right]\le 1.$$

Prove now that  $Y\ge 1$. It is equivalent with the inequality $$2^{\frac{p}{p-2}} \sin ^{\frac{2 p}{p-2}}\left[\frac{\pi }{2 p}\right]\le1/2,\ \ \ 1\le p\le 2,$$ or what is the same
\begin{equation}\label{before}2^{1-\frac{1}{p}} \sin \left[\frac{\pi }{2 p}\right]\geq 1, \ \ \ 1\le p\le 2.\end{equation} The last inequality is equivalent with $$2 \sin \left[\frac{\pi }{2 p}\right]>2^{\frac{1}{p}},\ \ \ \ \ \ 1\le p <2.$$ By taking the substitution $x=1/p$, it reduces to the inequality
$$g(x):=\log \left[2 \sin\left[\frac{\pi  x}{2}\right]\right]-x \log 2\ge 0,\ \ \ 1/2\le x\le 1.$$ Further $$g'(x)=\frac{1}{2} \pi  \cot\left[\frac{\pi  x}{2}\right]-\log 2$$ and $$g''(x)=-\frac{1}{4} \pi ^2 \csc^2 \left[\frac{\pi  x}{2}\right].$$ So $$g(x)\ge \min\{g(1),g(1/2)\}= 0.$$
This implies that $X\ge 1$.

\end{proof}

It follows that
\[\begin{split}Z&=X\sin \left[\frac{1}{2} p (\pi -t)\right]-(-\cos t)^{\frac{1}{2} (p-2)} Y\sin t\\&
\le \sin \left[\frac{1}{2} p (\pi -t)\right]-(-\cos t)^{\frac{1}{2} (p-2)} \sin t \\&<\sin \left[\frac{1}{2} p (\pi -t)\right]- \sin t<0 .\end{split}\]
Further we estimate  $\lambda$ on $\partial K$. We have  $$\lambda(a,0)=(1+a)^{p/2}+2^{p/2} \cos \frac{\pi }{2 p} \cos \left[\frac{p \pi }{2}\right] \sin \frac{\pi }{2 p}^{p-1}-2^{p/2} a^{p/2} \sin^p\frac{\pi }{2 p}$$ and so $$\partial_a\lambda(a,0)=\frac{1}{2} (1+a)^{-1+\frac{p}{2}} p-2^{-1+\frac{p}{2}} a^{-1+\frac{p}{2}} p \sin^p\frac{\pi }{2 p}\le 0.$$
Furthermore $$\lambda(a,\pi)=(-1+a)^{p/2}+2^{p/2} \cos \frac{\pi }{2 p} \sin \frac{\pi }{2 p}^{p-1}-2^{p/2} a^{p/2} \sin^p\frac{\pi }{2 p}.$$ Thus $$\partial_a\lambda(a,\pi)=\frac{1}{2} (-1+a)^{-1+\frac{p}{2}} p-2^{-1+\frac{p}{2}} a^{-1+\frac{p}{2}} p \sin^p\frac{\pi }{2 p}.$$ So $\partial_a\lambda(a,\pi)=0$ if and only if $$a_p=-\frac{1}{-1+\left(2^{p/2} \sin^p\frac{\pi }{2 p}\right)^{\frac{2}{p-2}}}.$$ In this case $$\lambda(a_p,\pi)=2^{p/2} \cos \frac{\pi }{2 p} \sin^{p-1} \frac{\pi }{2 p}-\left({-1+2^{\frac{p}{2-p}} \sin^{\frac{2 p}{2-p}} \frac{\pi }{2 p}}\right)^{\frac{1}{2} (2-p)}\le 0,$$ because $$2^{p/2} \cos \frac{\pi }{2 p} \sin^{p-1} \frac{\pi }{2 p}\le \sin\frac{\pi}{p}<1,\ \ \ \ \ 1\le p <2$$ and
$$2^{\frac{p}{2-p}} \sin^{\frac{2 p}{2-p}} \frac{\pi }{2 p}>2, \ \ \ \ \ \ 1\le p <2$$ but this is proved before \eqref{before}.

It remains to consider the cases $\lim_{a\to +\infty}\lambda(a,t)$  and $\lambda(1,t)$. First of all $$\lim_{a\to +\infty}\lambda(a,t)=-\infty.$$ Further we have $$\lambda(1,t)=(1+\cos t)^{p/2}+2^{p/2} \cos \frac{\pi }{2 p} \cos \left[\frac{1}{2} p (\pi -t)\right] \sin^{p-1} \frac{\pi }{2 p}-2^{p/2} \sin^p\frac{\pi }{2 p}.$$ Then $\lambda(1,t)\le 0$ if and only if $$\mu(t):=\cos \left[\frac{t}{2}\right]^p+\cos \frac{\pi }{2 p} \cos \left[\frac{1}{2} p (\pi -t)\right] \sin^{p-1} \frac{\pi }{2 p}-\sin^p\frac{\pi }{2 p}\le 0.$$ Further $$\mu(t)\le 0$$ if and only if $$\eta(t):=\frac{\cos \left[\frac{t}{2}\right]^p \sin^{-p} \frac{\pi }{2 p}}{1-\cos \left[\frac{1}{2} p (\pi -t)\right] \cot \frac{\pi }{2 p}}\le 1.$$

Next we have $$\eta'(t)=\frac{p \cos \left[\frac{t}{2}\right]^{p-1} \sin^{-1-p} \frac{\pi }{2 p} \zeta(t)}{2 \left(-1+\cos \left[\frac{1}{2} p (\pi -t)\right] \cot \frac{\pi }{2 p}\right)^2},$$ where $$\zeta(t)=\left(\cos \frac{\pi }{2 p} \cos \left[\frac{1}{2} (p-1) (\pi -t)\right]-\sin \frac{\pi }{2 p} \sin \left[\frac{t}{2}\right]\right).$$
Since $$\zeta'(t)=-\frac{1}{2} \cos \left[\frac{t}{2}\right] \sin \frac{\pi }{2 p}-\frac{1}{2} (1-p) \cos \frac{\pi }{2 p} \sin \left[\frac{1}{2} (p-1) (\pi -t)\right],$$ we obtain that \[\begin{split}\zeta'(t)&\le  \frac{1}{2}\sin\frac{\pi}{2p}\left(- \cos \left[\frac{t}{2}\right] -\frac{1}{2} (1-p)\text{  }\sin \left[\frac{1}{2} (p-1) (\pi -t)\right]\right)
\\&\le \frac{1}{2}\sin\frac{\pi}{2p}\left(- \cos \left[\frac{t}{2}\right] -\text{  }\sin \left[\frac{1}{2} (p-1) (\pi -t)\right]\right)
\\&\le  \frac{1}{2}\sin\frac{\pi}{2p}\left(- \cos \left[\frac{t}{2}\right] -\text{  }\sin \left[\frac{1}{2} (-1+2) (\pi -t)\right]\right)=0.\end{split}\]
Thus $\zeta$ is strictly decreasing. Hence $\eta'(t)$ is strictly decreasing. Since $\eta'((1-1/p)\pi)=0$, it follows that $(1-1/p)\pi$ is the only zero of $\eta'$, and thus $\eta$ attains its maximum for $t=(1-1/p)\pi$, which is equal to $1$. This finishes the proof of lemma.
\end{proof}

\section{Appendix}
For completeness we prove the following lemma of Verbitsky (\cite{ver}), needed for the proof of Lemma~\ref{posa}.
\begin{lemma}\label{versi} Let $1<p\le 2$.
For $A=A(p)=\frac{1}{\cos^p\frac{\pi}{2p}}$ and $B=B(p)=\tan\frac{\pi}{2p}$ and $|x|\le \frac{\pi}{2}$ we have
$$A\cos^p x - B \cos(px)\ge 1.$$

\end{lemma}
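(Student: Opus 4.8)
The plan is to show that $F(x):=A\cos^p x-B\cos(px)$ has minimum value $1$ on $[-\tfrac\pi2,\tfrac\pi2]$, attained precisely at $x_0=\tfrac{\pi}{2p}$ — the point that produces equality in Lemma~\ref{nice} after the substitution $s=t/2$. Since $F$ is even, I restrict to $x\in[0,\tfrac\pi2]$. For $x\in[0,\tfrac\pi2)$ I divide the target inequality by $\cos^p x>0$, turning it into $R(x)\le A$ with
\[ R(x)=\frac{1+B\cos(px)}{\cos^p x}. \]
The endpoint $x=\tfrac\pi2$ I would settle at the very end by continuity: $F\ge1$ on $[0,\tfrac\pi2)$ forces $\lim_{x\to\pi/2^-}F(x)=-B\cos(\tfrac{p\pi}{2})\ge1$. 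So everything reduces to proving that $R$ attains its maximum $A$ at $x_0$.

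First I would differentiate $R$. A direct computation, using $\cos(px)\sin x-\sin(px)\cos x=-\sin((p-1)x)$, gives
\[ R'(x)=\frac{p}{\cos^{p+1}x}\,N(x),\qquad N(x):=\sin x-B\sin((p-1)x). \]
Thus on $[0,\tfrac\pi2)$ the sign of $R'$ is the sign of $N$. Since $0<p-1<1$, both $\sin x$ and $\sin((p-1)x)$ are positive on $(0,\tfrac\pi2)$, so $N(x)>0$ is equivalent to $\rho(x)>B$, where $\rho(x):=\dfrac{\sin x}{\sin((p-1)x)}$. This reduces the whole problem to a monotonicity statement for $\rho$.

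The hard part — really the only nontrivial step — will be to prove that $\rho$ is strictly decreasing on $(0,\tfrac\pi2)$. I would analyze $(\log\rho)'(x)=\cot x-(p-1)\cot((p-1)x)$ and show it is negative, i.e. $(p-1)\cot((p-1)x)>\cot x$. I read this as the monotonicity in $s$ of $\phi(s):=s\cot(sx)$, for which $\phi'(s)=\dfrac{\tfrac12\sin(2sx)-sx}{\sin^2(sx)}<0$ because $\sin\psi<\psi$ for $\psi=2sx>0$. As $p-1<1$, the decrease of $\phi$ yields $\phi(p-1)>\phi(1)$, which is exactly the desired inequality, so $\rho$ is strictly decreasing.

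Finally I would locate the sign change. Using $(p-1)\tfrac{\pi}{2p}=\tfrac\pi2-\tfrac{\pi}{2p}$ one gets $\rho(x_0)=\dfrac{\sin(\pi/(2p))}{\cos(\pi/(2p))}=\tan\tfrac{\pi}{2p}=B$, so by monotonicity $N>0$ on $(0,x_0)$ and $N<0$ on $(x_0,\tfrac\pi2)$. Hence $R$ increases on $(0,x_0)$ and decreases on $(x_0,\tfrac\pi2)$, so its maximum is $R(x_0)=\tfrac{1}{\cos^p(\pi/(2p))}=A$ (here $\cos(px_0)=\cos\tfrac\pi2=0$). This gives $R\le A$, i.e. $A\cos^p x-B\cos(px)\ge1$ on $[0,\tfrac\pi2)$, and the endpoint $x=\tfrac\pi2$ together with the extension to $[-\tfrac\pi2,0]$ follow by continuity and evenness.
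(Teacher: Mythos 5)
Your proposal is correct and follows essentially the same route as the paper: both reduce the lemma to maximizing $R(x)=\bigl(1+B\cos(px)\bigr)/\cos^p x$, compute the same derivative, and show that its numerator $\sin x-B\sin((p-1)x)$ changes sign exactly once on $(0,\pi/2)$, at $x_0=\pi/(2p)$, where $R(x_0)=A$. The only variation is in that last sign analysis — you prove $\sin x/\sin((p-1)x)$ is strictly decreasing via the monotonicity of $s\mapsto s\cot(sx)$, while the paper instead shows the numerator is concave with zeros at $0$ and $\pi/(2p)$ — and both arguments are valid (note only that your strict inequality $0<p-1<1$ excludes $p=2$, where the lemma holds as an identity).
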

\begin{proof}
Let $x\ge 0$ and define   $$f(x) =\frac{ (1+B \cos(px))}{\cos^p x}. $$ Then $$f'(x)= p \frac{\sin x+B \sin (x-p x)}{\cos^{1+p} x }.$$

We need to show that the only solution of $$g(x)=\sin x+B \sin (x-p x)=0$$ is $x=\pi/(2p)$. Show that $g$ is concave.
We have $$g''(x)=-\sin x+(1-p)^2 \sin((p-1)x) \tan\frac{\pi }{2 p}.$$

As $$(1-p)^2\tan\frac{\pi }{2 p}\le (p-1)\tan\frac{\pi }{2 p} \le 1,$$ we have that $g''(x)\le -\sin x+ \sin((p-1)x) \le 0$. As $g'(0)=1+(1-p) \tan \frac{\pi }{2 p}>0$ and
$$g'(\pi/2)=(1-p) \sin\left[\frac{p \pi }{2}\right] \tan \frac{\pi }{2 p}<0,$$ there is a unique solution $x_0$ of $g'(x)=0$. Then $g$ increases on $(0,x_0)$ and decreases on $(x_0,\pi/2)$. Further $g(x_0)>0=g(\pi/(2p))> g(\pi/2)=1+\cos \left[\frac{p \pi }{2}\right] \tan \frac{\pi }{2 p}$. Since $f(0)=0$ and $\lim_{x\uparrow \pi/2} f(x)=-\infty$, we obtain that  $f(x)\le f(\pi/(2p))=A$. This finishes the proof.
\end{proof}
\subsection*{Acknowledgement} I am grateful to professor  I. E. Verbitsky for emailing references that have had a significant impact on this paper.


\begin{thebibliography}{1}
\bibitem{axl}
\textsc{S.  Axler, P. Bourdon,  W. Ramey}, {\it Harmonic function
theory}, Springer Verlag New York 1992.
\bibitem{tc}

\textsc{T. Carleman,} {\it Zur Theorie der Minimalfl\"achen.} (German) Math.
Z. 9 (1921),  no. 1-2, 154--160.


\bibitem{essen}
\textsc{M. Ess\'en,} \emph{A superharmonic proof of the M. Riesz conjugate function theorem.} Ark. Mat. 22 (1984), no. 2, 241--249.
\bibitem{hw}
\textsc{F. Hang, X. Wang, X. Yan,} {\it Sharp integral inequalities for
harmonic functions.}  Comm. Pure Appl. Math. 61 (2008), no. 1,
54--95.


\bibitem{hkz}
\textsc{H. Hedenmalm, B. Korenblum and K. Zhu,}
{\it Theory of Bergman spaces}, Springer-Verlag New York, 2000.
\bibitem{studia}
\textsc{ B. Hollenbeck,  N. J. Kalton, I. E. Verbitsky, }
\emph{Best constants for some operators associated with the Fourier and Hilbert transforms. }
Studia Math. 157 (2003), no. 3, 237--278.
\bibitem{verb1}
\textsc{B. Hollenbeck, I. E. Verbitsky,} \emph{Best constants for the Riesz projection.} J. Funct. Anal. 175 (2000), no. 2, 370--392.
\bibitem{lars}
\textsc{L. H\"ormander,} \emph{An introduction to complex analysis in several variables.} D. Van Nostrand Co., Inc., Princeton, N.J.-Toronto, Ont.-London 1966 x+208 pp.
\bibitem{graf}
\textsc{L. Grafakos,}
\emph{Best bounds for the Hilbert transform on $L^p(\mathbf{R})$.}
Math. Res. Lett. 4 (1997), no. 4, 469--471.
\bibitem{kaba}
\textsc{D.  Kalaj, E. Bajrami.}
\emph{On some Riesz and Carleman type inequalities for harmonic functions on the unit disk}, arXiv:1701.03429
\bibitem{ka}
\textsc{D.  Kalaj,}  \emph{Isoperimetric inequality for the polydisk.} Ann. Mat. Pura Appl. (4) 190 (2011), no. 2, 355--369
\bibitem{ka1}
\textsc{D. Kalaj, R. Me\v strovi\' c,} \emph{Isoperimetric type inequalities for harmonic functions.} J. Math. Anal. Appl. 373 (2011), no. 2, 439--448.

\bibitem{mp}
\textsc{M. Mateljevi\'c, M.  Pavlovi\'c:} {\it New proofs of the
isoperimetric inequality and some generalizations.} J. Math. Anal.
Appl. 98 (1984), no. 1, 25--30.

\bibitem{pik}
\textsc{S. K. Pichorides, S.}
\emph{On the best values of the constants in the theorems of M. Riesz, Zygmund and Kolmogorov.}
Collection of articles honoring the completion by Antoni Zygmund of 50 years of scientific activity, II.
Studia Math. 44 (1972), 165--179.


\bibitem{ks}
\textsc{K. Strebel:} {\it Quadratic differentials. Ergebnisse der Mathematik
und ihrer Grenzgebiete (3)}, 5. Springer-Verlag, Berlin, 1984.
xii+184 pp.

\bibitem{ver}
\textsc{I. E. Verbitsky,}
\emph{Estimate of the norm of a function in a Hardy space in terms of the norms of its
real and imaginary parts.}
Linear operators.
Mat. Issled. No. 54 (1980), 16-20, 164--165.

\bibitem{ZY}
\textsc{A. Zygmund,} \emph{ Trigonometric series. Vol. I, II. Reprinting of the 1968 version of the second edition with Volumes I and II bound together. } Cambridge University Press, Cambridge-New York-Melbourne, 1977. Vol. I: xiv+383 pp.; Vol. II: vii+364 pp.
\end{thebibliography}
\end{document}